\documentclass[11pt]{article}
\usepackage{amsfonts}
\usepackage{color}
\usepackage{amsmath,amssymb,comment}
\newtheorem{theo}{Theorem}[section]
\newtheorem{lem}[theo]{Lemma}
\newtheorem{cor}[theo]{Corollary}

\newtheorem{defi}[theo]{Definition}

\newcommand{\mysection}[1]{\setcounter{equation}{0}\section{#1} }
\newcommand{\proof}{{\sc Proof.} \quad}
\newcommand{\proofc}{{\sc Proof} \ }
\newcommand{\be}{\begin{equation} \label}
\newcommand{\ee}{\end{equation}}
\newcommand{\bea}{\begin{eqnarray}\label}
\newcommand{\eea}{\end{eqnarray}}
\newcommand{\bas}{\begin{eqnarray*}}
\newcommand{\eas}{\end{eqnarray*}}
\newcommand{\bit}{\begin{itemize}}
\newcommand{\eit}{\end{itemize}}
\newcommand{\qed}{\hfill$\Box$ \vskip.2cm}
\newcommand{\nn}{\nonumber}
\newcommand{\R}{\mathbb{R}}
\newcommand{\N}{\mathbb{N}}
\newcommand{\pO}{\partial\Omega}

\newcommand{\eps}{\varepsilon}

\newcommand{\wto}{\rightharpoonup}
\newcommand{\wsto}{\stackrel{\star}{\rightharpoonup}}
\newcommand{\hra}{\hookrightarrow}
\newcommand{\io}{\int_\Omega}

\newcommand{\del}{\delta}
\newcommand{\al}{\alpha}
\newcommand{\vt}{\vartheta}
\newcommand{\lam}{\lambda}

\newcommand{\sig}{\sigma}
\newcommand{\pa}{\partial}
\newcommand{\bom}{\overline{\Omega}}
\newcommand{\Om}{\Omega}

\newcommand{\ov}{\overline}

\newcommand{\wh}{\widehat}
\newcommand{\wt}{\widetilde}

\newcommand{\hs}{\hspace*}

\newcommand{\vp}{\varphi}
\newcommand{\lbal}{\left\{ \begin{array}{l}}
\newcommand{\lball}{\left\{ \begin{array}{ll}}
\newcommand{\ear}{\end{array} \right.}

\newcommand{\abs}{\\[5pt]}

\newcommand{\adb}{\allowdisplaybreaks}

\newcommand{\tme}{T_{max,\eps}}

\newcommand{\ueps}{u_\eps}
\newcommand{\veps}{v_\eps}
\newcommand{\weps}{w_\eps}
\newcommand{\heps}{h_\eps}

\newcommand{\yeps}{y_\eps}
\newcommand{\zeps}{z_\eps}
\newcommand{\Teps}{\Theta_\eps}
\newcommand{\gaeps}{\gamma_\eps}

\newcommand{\wepsx}{w_{\eps x}}
\newcommand{\wepsxx}{w_{\eps xx}}

\newcommand{\wepst}{w_{\eps t}}
\newcommand{\wepsxt}{w_{\eps xt}}

\newcommand{\vepsx}{v_{\eps x}}
\newcommand{\vepsxx}{v_{\eps xx}}
\newcommand{\vepsxxx}{v_{\eps xxx}}
\newcommand{\vepsxxxx}{v_{\eps xxxx}}
\newcommand{\vepst}{v_{\eps t}}
\newcommand{\vepsxt}{v_{\eps xt}}
\newcommand{\vepsxxt}{v_{\eps xxt}}
\newcommand{\uepsx}{u_{\eps x}}
\newcommand{\uepsxx}{u_{\eps xx}}
\newcommand{\uepsxxx}{u_{\eps xxx}}
\newcommand{\uepsxxxx}{u_{\eps xxxx}}

\newcommand{\uepst}{u_{\eps t}}
\newcommand{\uepsxt}{u_{\eps xt}}
\newcommand{\uepsxxt}{u_{\eps xxt}}
\newcommand{\Tepsx}{\Theta_{\eps x}}
\newcommand{\Tepsxx}{\Theta_{\eps xx}}
\newcommand{\Tepst}{\Theta_{\eps t}}

\newcommand{\zepsx}{z_{\eps x}}

\newcommand{\hg}{\wh{\gamma}}
\newcommand{\epss}{\eps_\star}

\newcommand{\ugs}{\gamma_\star}
\newcommand{\ogs}{{\gamma^\star}}
\newcommand{\Ths}{\Theta_\star}
\newcommand{\Aeps}{A_\eps}
\newcommand{\hepsx}{h_{\eps x}}
\newcommand{\hepst}{h_{\eps t}}
\newcommand{\Gas}{{\Gamma^\star}}
\newcommand{\DN}{D_{\mathcal{N}}}
\newcommand{\DD}{D_{\mathcal{D}}}
\newcommand{\ks}{\kappa_{\star}}
\begin{document}
\adb
\title{Global solutions and large time stabilization\\
in a model for thermoacoustics in a standard linear solid}
\author{
Tobias Black\footnote{tblack@math.uni-paderborn.de}\\
{\small Universit\"at Paderborn}\\
{\small  Institut f\"ur Mathematik}\\
{\small 33098 Paderborn, Germany}\\
\and
Michael Winkler\footnote{michael.winkler@math.uni-paderborn.de}\\
{\small Universit\"at Paderborn}\\
{\small Institut f\"ur Mathematik}\\
{\small 33098 Paderborn, Germany}
}
\date{}
\maketitle
\begin{abstract}
\noindent 
This manuscript is concerned with the one-dimensional system 
\begin{align*}
\left\lbrace
\begin{array}{r@{}l}
	\tau u_{ttt} + \al u_{tt} \,&= b \big(\gamma(\Theta) u_{xt}\big)_x + \big( \gamma(\Theta) u_x\big)_x, \\[1mm]
	\Theta_t \,&= D \Theta_{xx} + b\gamma(\Theta) u_{xt}^2,
\end{array}
\right.
\end{align*}
which is connected to the simplified modeling of heat generation in Zener type materials 
subject to stress from acoustic waves. Under the assumption that the coefficients $\tau>0, b>0$ and $\alpha\geq0$ satisfy 
\begin{align}\tag{$\star$}
	\alpha b >\tau,
\end{align}
it is shown that for all $\Theta_\star>0$ one can find $\nu=\nu(D,\tau,\alpha,b,\Theta_\star,\gamma)>0$ such that 
an associated Neumann type
initial-boundary value problem with Neumann data admits a unique time-global solution in a suitable framework of strong solvability whenever the initial temperature distribution fulfills $$\|\Theta_0\|_{L^\infty(\Omega)}\leq \Theta_\star$$ and the derivatives of the initial data are sufficiently small in the sense of
satisfying
$$\int_\Omega u_{0xx}^2 + \int_\Omega (u_{0t})_{xx}^2 + \int_\Omega (u_{0tt})_x^2 < \nu\quad\text{and}\quad
	\|\Theta_{0x}\|_{L^\infty(\Omega)}
	+ \|\Theta_{0xx}\|_{L^\infty(\Omega)}
	< \nu.$$
The constructed solution moreover features an exponential stabilization property for both components. \abs
In particular, the parameter range described by ($\star$)
coincides with the full stability regime known for the corresponding Moore--Gibson--Thompson equation 
despite the fairly strong nonlinear coupling to the temperature variable.\abs
\noindent {\bf Key words:} Moore-Gibson-Thompson; nonlinear acoustics; thermoelasticity; large time behavior\\
{\bf MSC 2020:} 74H20 (primary); 74F05, 35L05, 35B40 (secondary)
\end{abstract}
%
%
%
%
%
%
%
\newpage
\section{Introduction}\label{intro}
The transmission of acoustic waves through solid materials may lead to a temperature increase due to dissipative mechanical energy loss originating from the small displacements physical particles are subjected to in response to the acoustic disturbance (\cite{Yamaya,Khanghahi-Bala,Guo-Vavilov}).
One classical approach to model the thermo-viscoelastic evolution of a medium is to augment the Kelvin--Voigt equation, which describes the material response to physical stress while accounting for elastic stiffness and viscous damping, via coupling with an equation for the heat conduction according to Fourier's law (\cite{mielke_roubicek}). Essentially, this leads to a system of PDEs with structural similarities to the standard wave-equation for the mechanical displacement and the classical heat equation for the temperature variable. Due to these quite favorable characteristics, these systems have undergone thorough mathematical analysis in the past decades (\cite{dafermos,roubicek,blanchard_guibe,racke_zheng_JDE1997,mielke_roubicek,badal-friedrich2022}).\abs
Recently, approaches going beyond these traits have garnered more interest of the literature. Indeed, assuming the viscoelastic material to be of the more complex Zener type the well-known Moore--Gibson--Thompson equation (\cite{moore_gibson,thompson}) with prototypical form
\begin{align}\label{MGT}
\tau u_{ttt}+\alpha u_{tt}=\widehat{\gamma}\Delta u_t +\gamma\Delta u
\end{align}
lies at the core of the model. Herein, $u=u(x,t)$ denotes the displacement, $\tau$ represents thermal relaxation time, and the coefficients $\gamma,\widehat{\gamma}$ effectively encode stiffness and viscosity of the material. Having a third-order time derivative as conspicuous feature poses a significant mathematical challenge --  even in the setting with linear diffusion and constant coefficients -- but  leads to colorful qualitative solution behavior, ranging from large time decay whenever $\alpha\widehat{\gamma}>\tau\gamma$, to the possibility of solutions with infinite-time grow-up under the condition $\alpha\widehat{\gamma}<\tau\gamma$ (\cite{kal_las_marchand2011,marchand_mcdevitt_triggiani2012,delloro_pata_AMOP2017}). We would also like to refer the reader to  \cite{chen_ikehata,chen2025} for relevant results in whole space settings and to \cite{lasiecka_wang,lasiecka_wang2,delloro_las_pata2016} for integro-differential variants involving memory terms.\abs
Before going into more detail for our current endeavor, let us also briefly mention semilinear variants of \eqref{MGT} with an additional nonlinear term akin to 
\begin{align}\label{JMGT}
\tau u_{ttt}+\alpha u_{tt}=\widehat{\gamma}\Delta u_t +\gamma\Delta u+\big(f(u)\big)_{tt},
\end{align}
 which are called Jordan--Moore--Gibson-Thompson equations. When $f(u)=u^2$, it is known that the equation admits global solutions originating from suitably small initial data when considered in the parametric regime $\alpha\widehat{\gamma}>\tau\gamma$ either in bounded domains (\cite{kaltenbacher_lasiecka_pospieszalska}) or in the whole space (\cite{racke_saidhouari,said_JMGT}), and hence strikingly resembles previous results for \eqref{MGT}. Further results are concerned with finite-time blow-up (\cite{nikolic_win}) and nonlinear terms depending on the solution gradient (\cite{kaltenbacher_nikolic_M3AS2019}).\abs
From a conceptual standpoint, the realizations above feature quite limited influence of the temperature on the displacement, as the constant thermal relaxation is the only ingredient related to the temperature. Experiments, however, suggest that the elastic material parameters $\gamma,\widehat{\gamma}$ may depend on the temperature (see e.g. \cite{friesen}). Accordingly, expecting a more explicit coupling, the temperature could be included as state variable $\Theta$ governed by an appropriate heat equation. Following this approach (refer to \cite{claes_win2,Boley2012} for more details on system derivation), while also assuming that the coefficients $\gamma,\widehat{\gamma}$ are dependent on $\Theta$ and fulfill $\widehat{\gamma}(\Theta)=b\gamma(\Theta)$ for some $b>0$,  gives rise to a system of the form
\bas
	\lbal
	\tau u_{ttt} + \al u_{tt} =  b\big(\gamma(\Theta) u_{xt}\big)_x + \big( \gamma(\Theta) u_x\big)_x, \\[1mm]
	\Theta_t = D \Theta_{xx} + b\gamma(\Theta) u_{xt}^2.
	\ear
\eas
This model archetype was the focal point of inquiry in the recent works \cite{claes_win2,fricke_win}, where local existence and uniqueness of strong solutions and local existence of classical solutions was discussed, respectively, and will be the main object of our study.\abs
Couplings with temperature dependent coefficients have previously been considered in associated second-order problems. Let us briefly mention systems of Westervelt--Pennes type, wherein displacement is modeled by the second-order Westervelt equation featuring a nonlinear as in \eqref{JMGT} and coupling with a temperature state variable satisfying Pennes equation is assumed. For these hyperbolic systems results on global small-data solutions can be found in the recent literature \cite{careaga_nikolic_said_JNLS2025,benabbas_said_SIMA2024}). For further closely related variants we refer the reader to \cite{claes_lankeit_win,fricke,meyer,win_AMOP,win_SIMA} and references therein.\abs
{\bf Main results.} \quad
Motivated by the local existence results in \cite{claes_win2,fricke_win} and the asymptotic stability property in the dissipation-dominated parameter regime of \eqref{MGT}, we are now going to consider global existence and asymptotic stability in the problem
\be{0}
	\lball
	\tau u_{ttt} + \al u_{tt} = b \big(\gamma(\Theta) u_{xt}\big)_x + \big( \gamma(\Theta) u_x\big)_x,
	\qquad & x\in\Om, \ t>0, \\[1mm]
	\Theta_t = D \Theta_{xx} + b\gamma(\Theta) u_{xt}^2,
	& x\in\Om, \ t>0, \\[1mm]
	u_x=0, \quad \Theta_x=0,
	& x\in\pO, \ t>0, \\[1mm]
	u(x,0)=u_0(x), \quad u_t(x,0)=u_{0t}(x), \quad u_{tt}(x,0)=u_{0tt}(x), \quad \Theta(x,0)=\Theta_0(x),
	& x\in\Om.
	\ear
\ee
Our main results assert that despite the fairly strong coupling to the temperature variable appearing therein, 
the nonlinear problem (\ref{0}) inherits from the corresponding
version of (\ref{MGT}) a property of asymptotic stability throughout the full range $\al b>\tau$ within which the latter is dissipation-dominated; in particular, the following statement in this direction does not impose any 
smallness condition on the size of the initial temperature distribution when measured in $L^\infty$.
Here and below, given a bounded interval $\Om\subset\R$ we let 
$W^{2,p}_N(\Om):=\{\vp\in W^{2,p}(\Om) \ | \ \frac{\pa\vp}{\pa\nu}=0 \mbox{ on } \pO\}$ for $p\in (1,\infty]$. 
\begin{theo}\label{theo12}
  Let $\Om\subset\R$ be an open bounded interval, let $D>0$, and let $\tau>0,\al>0$ and $b>0$ be such that
  \be{12.01}
	\al b >\tau.
  \ee
  Then whenever
  \be{gamma}
	\gamma\in C^2([0,\infty))
	\quad \mbox{is such that $\gamma>0$ on $[0,\infty)$,}
  \ee
  given any $\Ths>0$ one can find $\nu=\nu(D,\tau,\al,b,\Ths,\gamma)>0$, $\ks=\ks(D,\tau,\al,b,\Ths,\gamma)>0$ and
  $C=C(D,\tau,\al,b,\Ths,\gamma)>0$ with the property that if
  \be{init} 
	\lbal
	u_0\in W^{2,2}_N(\Om)
	\mbox{ is such that } \io u_0=0, \\[1mm]
	u_{0t}\in W^{2,2}_N(\Om)
	\mbox{ is such that } \io u_{0t}=0, \\[1mm]
	u_{0tt} \in W^{1,2}(\Om)
	\mbox{ is such that } \io u_{0tt}=0
	\qquad \mbox{and} \\[1mm]
	\Theta_0 \in W^{2,\infty}_N(\Om) 
	\mbox{ is such that $\Theta_0\ge 0$ in $\Om$,}
	\ear
  \ee
  and if moreover
  \be{12.1}
	\io u_{0xx}^2 + \io (u_{0t})_{xx}^2 + \io (u_{0tt})_x^2 < \nu
  \ee
  as well as
  \be{12.2}
	\|\Theta_0\|_{L^\infty(\Om)} < \Ths
	\qquad \mbox{and} \qquad
	\|\Theta_{0x}\|_{L^\infty(\Om)}
	+ \|\Theta_{0xx}\|_{L^\infty(\Om)}
	< \nu,
  \ee
  then (\ref{0}) admits a uniquely determined
  strong solution $(u,\Theta)$, in the sense of Definition \ref{dw} below, which satisfies
  \be{12.3}
	\io u_{xx}^2(\cdot,t) + \io u_{xxt}^2(\cdot,t) + \io u_{xtt}^2(\cdot,t) \le C e^{-\ks t}
	\qquad \mbox{for a.e.~} t>0,
  \ee
  and which is such that there exists a number $\Theta_\infty=\Theta_\infty(D,\tau,\al,b,u_0,u_{0t},u_{0tt},\Theta_0)>0$ fulfilling
  \be{12.4}
	\|\Theta(\cdot,t)-\Theta_\infty\|_{L^\infty(\Om)} \le C e^{-\ks t}
	\qquad \mbox{for all } t>0.
  \ee
\end{theo}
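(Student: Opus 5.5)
We will argue by a continuation (bootstrap) argument built on the local strong solvability theory of \cite{claes_win2}. It provides a maximal strong solution $(u,\Theta)$ on $(0,T_{max})$, together with an extensibility criterion: $T_{max}<\infty$ is only possible if $\|u_{xx}\|_{L^2}+\|u_{xxt}\|_{L^2}+\|u_{xtt}\|_{L^2}+\|\Theta_x\|_{L^\infty}+\|\Theta_{xx}\|_{L^\infty}$ becomes unbounded, or if $\Theta$ leaves the range where $\gamma$ is controlled.

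\emph{Set-up and the bootstrap.} Put $\gamma_\star:=\min_{[0,2\Ths]}\gamma>0$ and $\gamma^\star:=\max_{[0,2\Ths]}\gamma$. Take the set of all $T<T_{max}$ on which the following hold:
\begin{itemize}
\item $0\le\Theta\le 2\Ths$;
\item $\|\Theta_x\|_{L^\infty}+\|\Theta_{xx}\|_{L^\infty}\le\delta$, where $\delta$ is small and fixed;
\item the energy $\mathcal E(t):=\io u_{xx}^2+\io u_{xxt}^2+\io u_{xtt}^2$ satisfies $\mathcal E(t)\le M\nu e^{-\kappa t}$.
\end{itemize}
The goal is to improve each of these bounds strictly. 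Nonnegativity of $\Theta$ follows from the comparison principle.

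\emph{Step 1: energy decay for the MGT part.} Set $z:=\tau u_t+\al u$. Since $\al b>\tau$, we can write $b=\tau/\al+\beta$ with $\beta>0$, and the first equation becomes
\[
z_{tt}=\tfrac1\al\big(\gamma(\Theta)z_x\big)_x+\beta\big(\gamma(\Theta)u_{xt}\big)_x .
\]
We will test the $x$-differentiated versions of this equation by $-z_{xxt}$ and $-z_{xx}$, in the spirit of the classical proofs of MGT stability \cite{kal_las_marchand2011,delloro_pata_AMOP2017}, and add the natural lower-order functionals, using the mean-zero conditions for Poincaré's inequality. This should produce a Lyapunov functional $\mathcal F\simeq\mathcal E$, with constants depending on $\gamma_\star,\gamma^\star$, such that
\[
\mathcal F'\le-c\,\mathcal F+C\big(\|\Theta_x\|_{L^\infty}+\|\Theta_{xx}\|_{L^\infty}+\|\Theta_t\|_{L^\infty}\big)\mathcal F .
\]
The error terms arise when derivatives fall on $\gamma(\Theta)$. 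Note that $\Theta_t$ occurs both through time derivatives of the weights and through $\gamma'(\Theta)\Theta_t$. We have $\|\Theta_t\|_{L^\infty}\le D\|\Theta_{xx}\|_{L^\infty}+b\gamma^\star\|u_{xt}\|_{L^\infty}^2\le C(\delta+\mathcal E)$, using $W^{1,2}\hookrightarrow L^\infty$ in one dimension. For $\delta$ and $M\nu$ small the error is absorbed, so $\mathcal E\le C_1\nu e^{-ct}$. Choosing $M:=2C_1$ then improves the third bootstrap bound.

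\emph{Step 2: the temperature.} The heat source $f:=b\gamma(\Theta)u_{xt}^2$ satisfies $\|f\|_{L^\infty}+\|f_x\|_{L^2}\le C\mathcal E\le C\nu e^{-ct}$, since $\|u_{xt}\|_{W^{1,2}}^2\lesssim\mathcal E$. We write $\Theta_x$ via the Neumann heat semigroup, which decays like $e^{-\lambda_1 D t}$ on gradients, and use $L^p$–$L^q$ smoothing estimates. This gives $\|\Theta_x(t)\|_{L^\infty}\le e^{-\lambda_1Dt}\nu+C\nu e^{-c't}$.

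Controlling $\Theta_{xx}$ in $L^\infty$ needs more than $f_x\in L^2$. We will obtain it from the identity $D\Theta_{xx}=\Theta_t-f$. The quantity $\Theta_t$ solves the heat equation with source $f_t=b\gamma'\Theta_tu_{xt}^2+2b\gamma u_{xt}u_{xtt}$, and $\|f_t\|_{L^2}\lesssim(1+\|\Theta_t\|_\infty)\mathcal E$. Semigroup smoothing into $L^\infty$ in one dimension then bounds $\|\Theta_t\|_{L^\infty}$ by $C\nu e^{-c''t}$, with initial datum $\Theta_t(0)=D\Theta_{0xx}+f(0)$ of size $O(\nu)$. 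For $\nu$ small this improves the second bootstrap bound to $\delta/2$.

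Moreover, $\io\Theta(t)=\io\Theta_0+\int_0^t\io f$ increases to a limit. Hence $\Theta\le\|\Theta_0\|_\infty+\int_0^\infty\|f\|_\infty\le\Ths+C\nu<2\Ths$ by the maximum principle. Setting $\Theta_\infty:=\frac1{|\Om|}\lim_{t\to\infty}\io\Theta(t)$, the exponential decay of $\Theta_x$ together with Poincaré's inequality yields (\ref{12.4}). Finally, $\Theta_\infty>0$: this is clear if $\Theta_0\not\equiv0$, and otherwise it follows from $\io f>0$ unless $u$ is trivial, which can be treated as a separate case.

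\emph{Conclusion and main obstacle.} The strict improvements, together with continuity in time, give $T_{max}=\infty$; uniqueness is inherited from \cite{claes_win2}. The main obstacle is Step 1. One must find a Lyapunov functional that captures the full range $\al b>\tau$ while the weight $\gamma(\Theta)$ varies in space and time and is only $L^\infty$-close to nothing in particular; only its derivatives are small. The delicate point is the cross terms that produce $\gamma'(\Theta)\Theta_x u_{xx}z_{xxt}$. These are quadratic in $\mathcal E$ with a small prefactor, but they require the highest-order dissipation $\io\gamma u_{xxt}^2$, which comes from the $\beta$-term. We plan first to use the multiplier $z_{xxt}$, and then add $\varepsilon\io u_{xxt}z_{xx}$ to recover dissipation in $u_{xx}$.
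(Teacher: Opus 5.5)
Your Step 1, which carries the whole theorem, is only asserted, and the multipliers you actually name do not close it. Testing $z_{tt}=\frac1\al(\gamma(\Theta)z_x)_x+\beta(\gamma(\Theta)u_{xt})_x$ by $-z_{xxt}$ gives
\[
\frac{d}{dt}\Big\{\frac12\io z_{xt}^2+\frac{1}{2\al}\io\gamma(\Theta)z_{xx}^2+\frac{\beta\tau}{2}\io\gamma(\Theta)u_{xxt}^2\Big\}+\al\beta\io\gamma(\Theta)u_{xxt}^2=\dots,
\]
so the only dissipation is in $u_{xxt}$. Testing by $-z_{xx}$ yields $\frac1\al\io\gamma(\Theta)z_{xx}^2$, but it puts $+\io z_{xt}^2$ on the right. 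And $\io u_{xxt}z_{xx}=\tau\io u_{xxt}^2+\al\io u_{xx}u_{xxt}$ is purely kinematic: its derivative contains $u_{xxtt}$ and cannot be evaluated through the PDE. Hence nothing dissipates $\io z_{xt}^2$, i.e.\ $\io u_{xtt}^2$.

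You need that dissipation both for $\mathcal F'\le -c\mathcal F$ and for the perturbation terms. The term $\gamma'(\Theta)\Theta_xz_xz_{xxt}$ contains $u_{xxtt}$, which lies in no available norm. It must therefore be integrated by parts in $x$ (there are no boundary terms since $\Theta_x=z_{xt}=0$ on $\pO$), which leaves $\io(\gamma'(\Theta)\Theta_xz_x)_xz_{xt}$. This is where $z_{xt}$-dissipation, not merely $\io\gamma(\Theta)u_{xxt}^2$, is required.

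The missing ingredient is a multiplier exploiting the damping $\al u_{tt}$. Testing by $-u_{xxt}$ and using $\tau u_{xtt}=z_{xt}-\al u_{xt}$ gives
\[
\frac{d}{dt}\io z_{xt}u_{xt}=\frac1\tau\io z_{xt}^2-\frac\al\tau\io z_{xt}u_{xt}-\frac1\al\io\gamma(\Theta)z_{xx}u_{xxt}-\beta\io\gamma(\Theta)u_{xxt}^2+\dots
\]
Then $N E_1-\mu\io z_{xt}u_{xt}+\frac{\mu}{4\tau}\io z_{xt}z_x$ works as a Lyapunov functional, with $E_1$ the braced functional above and $N$ large; the condition $\al b>\tau$ enters only through $\beta>0$. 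This plays the role of the paper's Lemma \ref{lem1}: testing by $-\wepsxx$ produces $\al\io\wepsx^2$ at the cost of $\io\gaeps(\Teps)\vepsxx^2$, which is compensated via Lemma \ref{lem2} and the choice $\frac1b<B<\frac\al\tau$ in Lemma \ref{lem5}. With this repair your $z$-splitting is a legitimate, arguably more transparent alternative to (\ref{y}). Your Step 2 is essentially Lemmas \ref{lem8}--\ref{lem9}.

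Second, you run the continuation argument directly for strong solutions of (\ref{0}), and three things are missing there:
\begin{itemize}
\item The identities of Step 1 involve $u_{xxtt}$ and a pairing of $u_{xttt}$ with $u_{xtt}$. These are not justified in the class of Definition \ref{dw}, where only $u_{tt}\in L^\infty_{loc}([0,\infty);W^{1,2}(\Om))$ is available.
\item $t\mapsto\mathcal E(t)$ is not known to be continuous.
\item An extensibility criterion for strong solutions in terms of $\mathcal E$ and $\|\Theta_x\|_{L^\infty(\Om)}+\|\Theta_{xx}\|_{L^\infty(\Om)}$ is presupposed; the paper only has (\ref{ext_eps}) for the regularized problem.
\end{itemize}
The paper sidesteps all of this by performing the energy estimates and the self-map argument (Lemma \ref{lem10}) on the classical solutions of (\ref{0eps}). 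It passes to the limit via Aubin--Lions only at the end, which is why (\ref{12.3}) holds only for a.e.\ $t$. The price is extra $\eps$-terms, such as $J_\eps$ in (\ref{6.17}) and the $\eps\vepsxxx$-part of $\hepst$, which are absorbed via the artificial dissipation and (\ref{7.44}). Your formula $f_t=b\gamma'(\Theta)\Theta_tu_{xt}^2+2b\gamma(\Theta)u_{xt}u_{xtt}$ is simpler only because you skip this step.

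Finally, the trivial case cannot be treated separately. For $u_0=u_{0t}=u_{0tt}=\Theta_0=0$ the solution is $(0,0)$ and $\Theta_\infty=0$. Monotonicity of $\io\Theta$ gives only $\Theta_\infty\ge\frac{1}{|\Om|}\io\Theta_0$, with positivity exactly for nontrivial data; the paper's argument yields no more.
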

{\bf Remark.} \quad
  The assumptions $\io u_0=\io u_{0t}=\io u_{0tt}=0$ in (\ref{init}) are imposed here exclusively for convenience in presentation.
  In fact, given $\Ths>0$ and arbitrary initial data 
  $(u_0,u_{0t},u_{0tt},\Theta_0) \in W^{2,2}_N(\Om) \cap W^{2,2}_N(\Om) \cap W^{1,2}(\Om) \cap W^{2,\infty}_N(\Om)$
  merely satisfying $\Theta_0\ge 0$ as well as (\ref{12.1}) and (\ref{12.2}) with $\nu=\nu(D,\tau,\al,b,\Ths,\gamma)$ as above,
  one can readily verify that if we let $y_0:=\frac{1}{|\Om|} \io u_0$, 
  $y_1:=\frac{1}{|\Om|} \io u_{0t}$, $y_2:=\frac{1}{|\Om|} \io u_{0tt}$
  and $(\wt{u}_0,\wt{u}_{0t},\wt{u}_{0tt},\wt{\Theta}_0)(x):=(u_0(x)-y_0,u_{0t}(x)-y_1,u_{0tt}(x)-y_2,\Theta_0(x))$ for $x\in\Om$, 
  and let 
  $(\wt{u},\wt{\Theta})$ denote the solution of (\ref{0}) emanating from the initial data 
  $(\wt{u}_0,\wt{u}_{0t},\wt{u}_{0tt},\wt{\Theta}_0)$, then writing
  $u(x,t):=\wt{u}(x,t)+y(t)$ and $\Theta(x,t):=\wt{\Theta}(x,t)$ for $(x,t)\in\Om\times (0,\infty)$, with $y\in C^3([0,\infty))$
  denoting the solution of $y'''-\al y''=0$ on $(0,\infty)$ with $y(0)=y_0, y'(0)=y_1$ and $y''(0)=y_2$, then
  $(u,\Theta)$ forms a global strong solution of (\ref{0}) satisfying (\ref{12.3}) and (\ref{12.4}).\abs
{\bf Main ideas.} \quad
Starting with a classical solution to a diffusion-perturbed regularization of \eqref{0} -- which can be obtained by utilizing well-established fixed point arguments --, the first goal of our inquest is to ensure that the approximate solutions are indeed time-global solutions under the assumptions on the initial data as presented in \eqref{12.1} and \eqref{12.2}. To this end, we will invoke a self-map argument (see Lemma~\ref{lem10}) wherein exponential decay properties for the third- and second-order quantities $\|u_{xtt}\|_{L^2(\Om)}$, $\|u_{xxt}\|_{L^2(\Om)}$ and $\|u_{xx}\|_{L^2(\Om)}$ (cf. Corollary~\ref{cor7}) play an essential role. In pursuit of these estimates, our approach is intricately arranged around the analysis of evolution properties enjoyed by functionals of the form 
\bea{en}
	y(t) 
	&:=& \frac{\tau}{2} \io u_{xtt}^2
	+ \frac{b}{2} \io \gamma(\Theta) u_{xxt}^2
	+ \frac{B+b\del}{2} \io \gamma(\Theta) u_{xx}^2
	+ \frac{\al B - \tau\del}{2} \io u_{xt}^2 \nn\\
	& & + \io \gamma'(\Theta) u_{xx} u_{xxt}
	+ \tau B \io u_{xt} u_{xtt}
	+ \tau\del \io u_x u_{xtt}
	+ \al\del \io u_x u_{xt},
\eea
(see Lemma~\ref{lem4}) with suitably chosen $B=B(\tau,\al,b)>0$ and $\del=\del(\tau,\al,b,\Ths,\gamma)$ ensuring that $y$ fulfills a genuine Lyapunov-type differential inequality (Lemma~\ref{lem6}) that entails exponential decay estimates. Since the previously derived bounds are uniform in nature, we can finally turn to an Aubin--Lions type argument to construct a limit solution (see Section~\ref{sec7}).
\mysection{Solution concept and reduction to a family of diffusion-perturbed regularized first-order systems}
The following natural concept of solvability essentially coincides with that introduced in \cite[Definition 2.1]{claes_win2}.
\begin{defi}\label{dw}
  Let $D>0$, $\tau>0$, $\al\ge 0$ and $b>0$, let
  $\gamma\in C^0([0,\infty))$ nonnegative, 
  let
  $u_0\in W^{2,2}_N(\Om), u_{0t}\in W^{2,2}_N(\Om)$, $u_{0tt} \in W^{1,2}(\Om)$ and $\Theta_0\in C^0(\bom)$
  be such that $\Theta_0\ge 0$, and suppose that
  \be{w1}
	\lbal
	u \in C^0([0,\infty);C^1(\bom)) \cap L^\infty_{loc}([0,\infty);W^{2,2}_N(\Om))
	\qquad \mbox{and} \\[1mm]
	\Theta\in C^0(\bom\times [0,\infty)) \cap C^{2,1}(\bom\times (0,\infty))
	\ear
  \ee
  be such that $\Theta\ge 0$.
  Then we will say that $(u,\Theta)$ is a {\em strong solution} of (\ref{0}) if
  \be{w2}
	\lbal
	u_t \in C^0([0,\infty);C^1(\bom)) \cap L^\infty_{loc}([0,\infty);W^{2,2}_N(\Om)), \\[1mm]
	u_{tt} \in C^0(\bom\times [0,\infty)) \cap L^\infty_{loc}([0,\infty);W^{1,2}(\Om)), \\[1mm]
	\Theta_x \in L^\infty_{loc}(\bom\times [0,\infty))
	\qquad \mbox{and} \\[1mm]
	\Theta_t \in L^\infty_{loc}(\bom\times [0,\infty)),
	\ear
  \ee
  if 
  \be{w3}
	u(\cdot,0)=u_0,
	\quad u_t(\cdot,t)=u_{0t}\quad \mbox{and} \quad
	\Theta(\cdot,0)=\Theta_{0}
	\quad \mbox{in } \Om,
  \ee  
  if 
  \bea{wu}
	- \tau \int_0^\infty\! \io u_{tt} \vp_t 
	- \tau \io u_{0tt} \vp(\cdot,0)
	+ \al \int_0^\infty\! \io u_{tt} \vp
	= - b \int_0^\infty\! \io \gamma(\Theta) u_{xt} \vp_x
	- \int_0^\infty\! \io \hg(\Theta) u_x\vp_x
  \eea
  for each $\vp\in C_0^\infty(\bom\times [0,\infty))$, and if 
  $\Theta_t=D\Theta_{xx} +  b\gamma(\Theta) u_{xt}^2$ in $\Om\times (0,\infty)$ as well as $\Theta_x=0$ on $\pO\times (0,\infty)$.
\end{defi}
In order to construct such a solution under conditions compatible with the statement of Theorem \ref{theo12},
assuming (\ref{gamma}) and (\ref{init}) we fix
\be{gaeps0}
	(\gaeps)_{\eps\in (0,1)} \subset C^\infty([0,\infty);[0,\infty))
\ee
as well as
\be{ie0}
	(u_{0\eps})_{\eps\in (0,1)} \subset C^\infty(\bom),
	\quad 
	(v_{0\eps})_{\eps\in (0,1)} \subset C^\infty(\bom),
	\quad
	(w_{0\eps})_{\eps\in (0,1)} \subset C^\infty(\bom)
	\quad \mbox{and} \quad
	(\Theta_{0\eps})_{\eps\in (0,1)} \subset C^\infty(\bom)
\ee
such that
\be{gaepsc}
	\gaeps \to \gamma
	\mbox{ in } C^2_{loc}([0,\infty))
	\qquad \mbox{as } \eps\searrow 0,
\ee
that all the functions $u_{0\eps x}$, $v_{0\eps x}$, $w_{0\eps x}$ and $\Theta_{0\eps x}$ are compactly supported in $\Om$ with
\bas
	\io u_{0\eps} = 0,\quad \io v_{0\eps} = 0
	\quad \mbox{and} \quad \io w_{0\eps}=0
\eas
as well as $\Theta_{0\eps} \ge 0$ in $\Om$ and 
\be{ie1}
	\|\Theta_{0\eps x}\|_{L^\infty(\Om)}+ \|\Theta_{0\eps xx}\|_{L^\infty(\Om)} 
	\le \|\Theta_{0 x}\|_{L^\infty(\Om)}+\|\Theta_{0 xx}\|_{L^\infty(\Om)} + \eps
\ee
for all $\eps\in (0,1)$,
and that
\be{iec}
	\lbal
	u_{0\eps} \to u_0
	\quad \mbox{ in } W^{2,2}(\Om),
	\\[1mm]
	v_{0\eps} \to u_{0t}
	\quad \mbox{ in } W^{2,2}(\Om),
	\\[1mm]
	w_{0\eps} \to u_{0tt}
	\quad \mbox{ in } W^{1,2}(\Om)
	\qquad \mbox{and} 
	\\[1mm]
	\Theta_{0\eps} \to \Theta_0
	\quad \mbox{ in } L^\infty(\Om)
	\ear
\ee
as well as
\be{iec2}
	\sqrt{\eps} u_{0\eps xxx} \to 0
	\quad \mbox{in } L^2(\Om)
\ee
as $\eps\searrow 0$.
Then, as noted in \cite[Lemma 3.1]{claes_win2},
for any choice of $\eps\in (0,1)$ the problem
\be{0eps}
	\lball
	\tau \wepst = \eps \wepsxx + b \big(\gaeps(\Teps) \vepsx\big)_x + \big(\gaeps(\Teps)\uepsx\big)_x - \al\weps,
	\quad & x\in\Om, \ t>0, \\[1mm]
	\vepst = \eps \vepsxx + \weps,
	\quad & x\in\Om, \ t>0, \\[1mm]
	\uepst = \eps \uepsxx + \veps,
	\quad & x\in\Om, \ t>0, \\[1mm]
	\Tepst = D \Tepsxx + b \gaeps(\Teps) \vepsx^2,
	\quad & x\in\Om, \ t>0, \\[1mm]
	\wepsx=\vepsx=\uepsx=\Tepsx=0,
	\quad & x\in\pO, \ t>0, \\[1mm]
	\weps(x,0)=w_{0\eps}(x), \ \; \veps(x,0)=v_{0\eps}(x), \ \; \ueps(x,0)=u_{0\eps}(x), \ \; \Teps(x,0)=\Theta_{0\eps}(x),
	\quad & x\in\Om,
	\ear
\ee
admits a local classical solution in the following sense:
\begin{lem}\label{lem_loc}
  Let $D>0, \tau>0, \al>0$ and $b>0$, and assume (\ref{gaeps0}) and (\ref{ie0}).
  Then for each $\eps\in (0,1)$, there exist $\tme\in (0,\infty]$ and 
  \be{reg}
	\lbal
	\weps \in C^{2,1}(\bom\times [0,\tme)) \cap C^\infty(\bom\times (0,\tme)), \\[1mm]
	\veps \in C^{2,1}(\bom\times [0,\tme)) \cap C^\infty(\bom\times (0,\tme)), \\[1mm]
	\ueps \in C^{2,1}(\bom\times [0,\tme)) \cap C^\infty(\bom\times (0,\tme))
	\qquad \mbox{and} \\[1mm]
	\Teps \in C^{2,1}(\bom\times [0,\tme)) \cap C^\infty(\bom\times (0,\tme))
	\ear
  \ee
  such that $\Teps\ge 0$ in $\Om\times (0,\tme)$, that $(\weps,\veps,\ueps,\Teps)$ solves 
  (\ref{0eps}) classically in $\Om\times (0,\tme)$, and that
  \bea{ext_eps}
	& & \hs{-15mm}
	\mbox{if $\tme<\infty$, \quad then \quad} \nn\\
	& & \hs{-8mm}
	\limsup_{t\nearrow\tme} \Big\{ 
	\|\weps(\cdot,t)\|_{L^\infty(\Om)} 
	+ \|\veps(\cdot,t)\|_{W^{1,2}(\Om)} 
	+ \|\ueps(\cdot,t)\|_{W^{1,2}(\Om)} 
	+ \|\Teps(\cdot,t)\|_{L^\infty(\Om)} 
	\Big\}
	= \infty.
  \eea
  Moreover,
  \be{mass}
	\io \weps(\cdot,t) = \io \veps(\cdot,t) = \io \ueps(\cdot,t) =0
	\qquad \mbox{for all } t\in (0,\tme).
  \ee
\end{lem}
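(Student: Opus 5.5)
We establish the local existence and extensibility part by a standard Banach fixed point argument for the parabolic system (\ref{0eps}). Since all four equations carry a diffusion term ($\eps$ resp.\ $D$) with homogeneous Neumann data, (\ref{0eps}) is a triangular-free but semilinear-type parabolic system whose only nonlinearities are the terms $\big(\gaeps(\Teps)\vepsx\big)_x$, $\big(\gaeps(\Teps)\uepsx\big)_x$ and $\gaeps(\Teps)\vepsx^2$. Given $R>0$ and small $T>0$, we consider the closed set $S$ of tuples $(w,v,u,\Theta)$ in $C^0([0,T];L^\infty(\Om)\times W^{1,\infty}(\Om)\times W^{1,\infty}(\Om)\times W^{1,\infty}(\Om))$ of norm at most $R$ exceeding that of the data. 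Define $\Phi$ by solving the linear heat equations with these nonlinear terms as sources, via the variation-of-constants formula for the Neumann heat semigroup. Well-known smoothing estimates, e.g.\ $\|e^{t\Delta}\partial_x\vp\|_{L^\infty}\le C(1+t^{-1/2})\|\vp\|_{L^\infty}$ and $\|\partial_x e^{t\Delta}\vp\|_{L^\infty}\le C(1+t^{-1/2})\|\vp\|_{L^\infty}$, show that $\Phi$ maps $S$ into itself and is contractive for $T$ small. Here we use that $\gaeps\in C^\infty$ is locally Lipschitz on bounded sets, and we extend $\gaeps$ evenly to negative arguments so that possible negativity of $\Theta$ causes no issue. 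The resulting $T$ depends only on an upper bound for the norms of the initial data.

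Parabolic Schauder theory, bootstrapped together with the compact support of $u_{0\eps x}$ etc., which ensures compatibility, then yields the regularity in (\ref{reg}). Nonnegativity of $\Teps$ follows from the comparison principle, since $b\gaeps(\Teps)\vepsx^2\ge0$ and $\Theta_{0\eps}\ge0$.

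For the extensibility criterion (\ref{ext_eps}), the difficulty is that the criterion only involves $\|\veps\|_{W^{1,2}}$, $\|\ueps\|_{W^{1,2}}$ and the $L^\infty$ norms of $\weps,\Teps$, whereas the fixed point needs $W^{1,\infty}$ control of $\veps,\ueps,\Teps$. This step is the main obstacle, and we handle it as follows. We assume that the quantity in (\ref{ext_eps}) stays bounded and derive bounds in the fixed-point norm. In one dimension $W^{1,2}\hra L^\infty$, so $\ueps,\veps$ are bounded. The equation $\vepst=\eps\vepsxx+\weps$ with bounded $\weps$ gives $\vepsx$ bounded in $L^\infty$ by semigroup estimates. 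The same argument with $\veps$ as source gives $\uepsx$ bounded. Then $\Tepsx$ is controlled through $\|\partial_x e^{tD\Delta}\|$ applied to the now bounded source $\gaeps(\Teps)\vepsx^2$. Iterating the local theory then extends the solution, and the usual maximal-time argument yields (\ref{ext_eps}).

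Finally, (\ref{mass}) follows by integrating each of the first three equations over $\Om$ using the Neumann conditions. This gives $\tau\frac{d}{dt}\io\weps=-\al\io\weps$, $\frac{d}{dt}\io\veps=\io\weps$ and $\frac{d}{dt}\io\ueps=\io\veps$. Since the initial means vanish, we get successively $\io\weps\equiv0$, then $\io\veps\equiv0$, then $\io\ueps\equiv0$.
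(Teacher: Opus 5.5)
Your proposal is correct and matches what the paper relies on. The paper gives no proof of its own: it refers to \cite[Lemma 3.1]{claes_win2} and describes the regularized solutions as obtainable by ``well-established fixed point arguments.'' Your outline supplies exactly that: a Banach fixed point argument built on Neumann heat semigroup estimates, Schauder bootstrapping using the compatibility provided by the compactly supported derivatives of the data, comparison for $\Teps\ge 0$, and integration of the first three equations for (\ref{mass}). Your key observation for (\ref{ext_eps}) is also sound: bounds for $\|\weps\|_{L^\infty(\Om)}$ and $\|\Teps\|_{L^\infty(\Om)}$ already yield $W^{1,\infty}$ bounds for $\veps$, $\ueps$ and $\Teps$ on bounded time intervals via the smoothing estimate for $\partial_x e^{t\Delta}$, and this closes the extensibility criterion.
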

Throughout the sequel, we let the open bounded interval $\Om\subset\R$ be fixed, and in order to unambiguously trace dependencies 
of the estimates to be subsequently derived, in several places below we only rely on certain parts of the assumptions
in (\ref{gamma}), (\ref{init}), (\ref{gaeps0}), (\ref{ie0}), (\ref{gaepsc}), (\ref{ie1}), (\ref{iec}) and (\ref{iec2})

\mysection{Elementary testing procedures}
Let us begin our construction of an energy functional by recording the following outcome of a straightforward
testing procedure, detailed in \cite[Lemma 3.5]{claes_win2} for a close relative of (\ref{0eps}).
\begin{lem}\label{lem1}
  Let $D>0, \tau>0, \al>0$ and $b>0$, assume (\ref{gaeps0}) and (\ref{ie0}), and let $\eps\in (0,1)$. 
  Then
  \bea{1.1}
	& & \hs{-20mm}
	\frac{d}{dt} \bigg\{
	\frac{\tau}{2} \io \wepsx^2
	+ \frac{b}{2} \io \gaeps(\Teps) \vepsxx^2
	+ \io \gaeps(\Teps) \uepsxx \vepsxx
	+ \eps \io \gaeps(\Teps) \uepsxxx^2 \bigg\} \nn\\
	& & + \al \io \wepsx^2
	+ \eps \io \wepsxx^2
	+ b \eps \io \gaeps(\Teps) \vepsxxx^2
	+ 2\eps^2 \io \gaeps(\Teps) \uepsxxxx^2 \nn\\
	&=& \io \gaeps(\Teps) \vepsxx^2
	+ \frac{b}{2} \io \gaeps'(\Teps) \Tepst \vepsxx^2
	+ \io \gaeps'(\Teps) \Tepst \uepsxx \vepsxx \nn\\
	& & + b \io \gaeps'(\Teps) \Tepsx \vepsxx \wepsx
	+ \io \gaeps'(\Teps) \Tepsx \uepsxx \wepsx 
	+ b \io \gaeps'(\Teps) \Tepsxx \vepsx \wepsx \nn\\
	& & + \io \gaeps'(\Teps) \Tepsxx \uepsx \wepsx
	+ b \io \gaeps''(\Teps) \Tepsx^2 \vepsx \wepsx
	+ \io \gaeps''(\Teps) \Tepsx^2 \uepsx \wepsx \nn\\
	& & + \eps \io \gaeps'(\Teps) \Tepst \uepsxxx^2
	- b \eps \io \gaeps'(\Teps) \Tepsx \vepsxx \vepsxxx \nn\\
	& & - \eps \io \gaeps'(\Teps) \Tepsx \uepsxx \vepsxxx
	- \eps \io \gaeps'(\Teps) \Tepsx \uepsxxx \vepsxx \nn\\
	& & - 2\eps^2 \io \gaeps'(\Teps) \Tepsx \uepsxxx \uepsxxxx
	\qquad \mbox{for all } t\in (0,\tme).
  \eea
\end{lem}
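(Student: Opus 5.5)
The identity is obtained by testing: differentiate the first equation of (\ref{0eps}) in $x$ and test against $\wepsx$, and differentiate the $v$- and $u$-equations twice in $x$ and test against suitable weighted multiples of $\vepsxx$ and $\uepsxx$, $\uepsxxx$. The weights are to be chosen so that the cross terms reassemble into the time derivative on the left of (\ref{1.1}). All manipulations are justified by the regularity (\ref{reg}) on $(0,\tme)$. The boundary conditions $\wepsx=\vepsx=\uepsx=\Tepsx=0$ on $\pO$ make every boundary term from integration by parts vanish. For the second- and third-order derivatives this uses that $\vepsxt=\eps\vepsxxx+\wepsx$ vanishes on $\pO$, so $\vepsxxx=0$ there as well, and similarly for $\uepsxxx$.

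\emph{Step 1.} Multiply the first equation by $-\wepsxx$ and integrate. The left-hand side gives $\frac{\tau}{2}\frac{d}{dt}\io\wepsx^2$. The term $-\al\weps$ produces $\al\io\wepsx^2$ and the diffusion term produces $\eps\io\wepsxx^2$. Integrating by parts once, the flux terms become $b\io(\gaeps(\Teps)\vepsx)_{xx}\wepsx+\io(\gaeps(\Teps)\uepsx)_{xx}\wepsx$. Expanding $(\gaeps\vepsx)_{xx}=\gaeps\vepsxxx+2\gaeps'\Tepsx\vepsxx+\gaeps'\Tepsxx\vepsx+\gaeps''\Tepsx^2\vepsx$, and the analogous expression for $u$, already yields the listed terms with $\Tepsxx$ and $\Tepsx^2$ on the right of (\ref{1.1}).

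\emph{Step 2.} Remove the principal terms $b\io\gaeps\vepsxxx\wepsx$ and $\io\gaeps\uepsxxx\wepsx$. For the first, substitute $\wepsx=\vepsxt-\eps\vepsxxx$. After an integration by parts, $b\io\gaeps\vepsxxx\vepsxt$ becomes $-\frac b2\frac{d}{dt}\io\gaeps\vepsxx^2$, plus the term $\frac b2\io\gaeps'\Tepst\vepsxx^2$, plus a $\Tepsx$ cross term. The remainder gives $-b\eps\io\gaeps\vepsxxx^2$. For the second, substitute $\wepsx=\vepsxt-\eps\vepsxxx$. Then $\io\gaeps\uepsxxx\vepsxt$ becomes, after integrating by parts in $x$ and using $\uepsxxt=\eps\uepsxxxx+\vepsxx$, the expression $-\frac{d}{dt}\io\gaeps\uepsxx\vepsxx+\io\gaeps\vepsxx^2+\io\gaeps'\Tepst\uepsxx\vepsxx+\eps\io\gaeps\uepsxxxx\vepsxx+\dots$. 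This is the source of the term $\io\gaeps\vepsxx^2$ on the right.

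\emph{Step 3.} Two $\eps$-terms remain, $-\eps\io\gaeps\uepsxxx\vepsxxx$ and $\eps\io\gaeps\uepsxxxx\vepsxx$ (up to sign). Rewrite each via $\vepsxx=\uepsxxt-\eps\uepsxxxx$, or $\vepsxxx=\uepsxxxt-\eps\uepsxxxxx$ before integrating by parts. This produces $-\eps\frac{d}{dt}\io\gaeps\uepsxxx^2$ together with $\eps\io\gaeps'\Tepst\uepsxxx^2$ and the dissipative term $2\eps^2\io\gaeps\uepsxxxx^2$. The factor $2$ reflects that two half-contributions combine.

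\emph{Main obstacle.} The main difficulty is the bookkeeping in Steps 2 and 3. The coefficients $1$, $b/2$ and $\eps$ in the functional are forced, so every lower-order commutator term must be tracked: each $\gaeps'\Tepsx$ term arising from moving an $x$-derivative across $\gaeps(\Teps)$ must match exactly one term listed in (\ref{1.1}), including the $\eps$- and $\eps^2$-terms with $\Tepsx$. I would organize the computation by first writing every integral in the form $\io\gaeps(\cdot)$ and then collecting commutators. I would also cross-check against \cite[Lemma 3.5]{claes_win2}.
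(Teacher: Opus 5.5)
Your proposal is correct and is exactly the straightforward testing procedure the paper invokes for this lemma, mirroring its proofs of Lemmas \ref{lem2} and \ref{lem3}. Testing the first equation of (\ref{0eps}) by $-\wepsxx$ and eliminating the principal terms via $\wepsx=\vepsxt-\eps\vepsxxx$ and $\vepsxx=\uepsxxt-\eps\uepsxxxx$ reproduces (\ref{1.1}) exactly, and in your Step 3 each of the two $\eps$-terms contributes precisely half of $-\eps\frac{d}{dt}\io\gaeps(\Teps)\uepsxxx^2$, $\eps\io\gaeps'(\Teps)\Tepst\uepsxxx^2$ and the $2\eps^2$-dissipation.

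The only inaccuracy is in your bookkeeping remark: the commutator terms do not each match a single term of (\ref{1.1}) but partly merge. For example, the $2b\io\gaeps'(\Teps)\Tepsx\vepsxx\wepsx$ from Step 1 combines with the $-b\io\gaeps'(\Teps)\Tepsx\vepsxx\wepsx$ hidden in your Step 2 cross term to give the coefficient $b$ in (\ref{1.1}). The same happens for the corresponding $u$-term and for the $\eps$- and $\eps^2$-terms containing $\Tepsx$.
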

The identity (\ref{1.1}) will be supplemented by an evolution property of a second family
of functionals, the introduction of which can be motivated by the ambition to adequately compensate the first
summand on the right of (\ref{1.1}):
\begin{lem}\label{lem2}
  If $D>0, \tau>0, \al>0$ and $b>0$, and if (\ref{gaeps0}) and (\ref{ie0}) hold, then 
  \bea{2.1}
	& & \hs{-20mm}
	\frac{d}{dt} \bigg\{
	\tau \io \vepsx\wepsx
	+ \frac{\al}{2} \io \vepsx^2
	+ \frac{1}{2} \io \gaeps(\Teps) \uepsxx^2
	+ \frac{(1+\tau)\eps}{2} \io \vepsxx^2 \bigg\} \nn\\
	& & + b \io \gaeps(\Teps) \vepsxx^2
	+ \al \eps \io \vepsxx^2
	+ \eps \io \gaeps(\Teps)\uepsxxx^2
	+ (1+\tau) \eps^2 \io \vepsxxx^2 \nn\\
	&=& \tau \io \wepsx^2
	+ \frac{1}{2} \io \gaeps'(\Teps) \Tepst \uepsxx^2
	- b \io \gaeps'(\Teps) \Tepsx \vepsx \vepsxx
	- \io \gaeps'(\Teps) \Tepsx \uepsx\vepsxx \nn\\
	& & - \eps \io \gaeps'(\Teps) \Tepsx \uepsxx \uepsxxx
	\qquad \mbox{for all $t\in (0,\tme)$ and } \eps\in (0,1).
  \eea
\end{lem}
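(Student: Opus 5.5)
The identity (\ref{2.1}) follows by direct computation: differentiate each of the four summands in the braces, substitute (\ref{0eps}), and integrate by parts. The boundary terms all vanish because of the Neumann conditions in (\ref{0eps}). We also need the higher-order boundary information $\vepsxxx=\uepsxxx=0$ on $\pO\times(0,\tme)$. This follows by differentiating the second and third equations in (\ref{0eps}) with respect to $x$ and evaluating on $\pO$. There $\vepsxt=\uepsxt=\wepsx=\vepsx=0$, so $\eps\vepsxxx=0$ and $\eps\uepsxxx=0$. The smoothness in (\ref{reg}) justifies all manipulations for $t\in(0,\tme)$.

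\textbf{Mixed term.} Using $\vepsxt=\eps\vepsxxx+\wepsx$, we get
\[
\tau\frac{d}{dt}\io\vepsx\wepsx=\tau\io\wepsx^2+\tau\eps\io\vepsxxx\wepsx+\io\vepsx(\tau\wepst)_x .
\]
In the middle term we integrate by parts, using $\wepsx=0$ on $\pO$, to obtain $-\tau\eps\io\vepsxx\wepsxx$. In the last term we integrate by parts, using $\vepsx=0$ on $\pO$, to obtain $-\io\vepsxx\,\tau\wepst$. Into this we insert the first equation of (\ref{0eps}), which yields four contributions.

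The diffusion part gives $-\eps\io\vepsxx\wepsxx$. Combined with the previous term this is $-(1+\tau)\eps\io\vepsxx\wepsxx$.

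The part $-b\io\vepsxx(\gaeps(\Teps)\vepsx)_x$ equals
\[
-b\io\gaeps(\Teps)\vepsxx^2-b\io\gaeps'(\Teps)\Tepsx\vepsx\vepsxx .
\]
This supplies the dissipative term on the left of (\ref{2.1}) and one error term on the right.

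Analogously, $-\io\vepsxx(\gaeps(\Teps)\uepsx)_x$ produces
\[
-\io\gaeps(\Teps)\uepsxx\vepsxx-\io\gaeps'(\Teps)\Tepsx\uepsx\vepsxx .
\]

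Finally, $\al\io\vepsxx\weps=-\al\io\vepsx\wepsx$. Writing $\wepsx=\vepsxt-\eps\vepsxxx$, this becomes
\[
-\frac{d}{dt}\frac{\al}{2}\io\vepsx^2-\al\eps\io\vepsxx^2 .
\]

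\textbf{Remaining leftovers.} Two terms from the mixed term still need to be absorbed.

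The first is $-(1+\tau)\eps\io\vepsxx\wepsxx$. We compare it with
\[
\frac{(1+\tau)\eps}{2}\frac{d}{dt}\io\vepsxx^2=(1+\tau)\eps\io\vepsxx(\eps\vepsxxxx+\wepsxx).
\]
Integrating by parts and using $\vepsxxx=0$ on $\pO$ gives $\io\vepsxx\vepsxxxx=-\io\vepsxxx^2$. Hence this derivative equals $-(1+\tau)\eps^2\io\vepsxxx^2+(1+\tau)\eps\io\vepsxx\wepsxx$. The cross terms cancel, leaving exactly the dissipation $(1+\tau)\eps^2\io\vepsxxx^2$ on the left.

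The second is $-\io\gaeps(\Teps)\uepsxx\vepsxx$. We use $\veps=\uepst-\eps\uepsxx$, so $\vepsxx=\uepsxxt-\eps\uepsxxxx$. The first piece is
\[
-\io\gaeps(\Teps)\uepsxx\uepsxxt=-\frac{d}{dt}\frac12\io\gaeps(\Teps)\uepsxx^2+\frac12\io\gaeps'(\Teps)\Tepst\uepsxx^2 .
\]
The second piece is $\eps\io\gaeps(\Teps)\uepsxx\uepsxxxx$. Integrating by parts with $\uepsxxx=0$ on $\pO$ turns it into
\[
-\eps\io\gaeps(\Teps)\uepsxxx^2-\eps\io\gaeps'(\Teps)\Tepsx\uepsxx\uepsxxx .
\]

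Collecting all contributions gives (\ref{2.1}). The only non-mechanical points are the vanishing of $\vepsxxx$ and $\uepsxxx$ on the boundary, and the bookkeeping showing that the two $\eps\io\vepsxx\wepsxx$ terms cancel exactly. The latter is what fixes the coefficient $\frac{(1+\tau)\eps}{2}$ in the functional.
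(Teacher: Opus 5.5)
Your proposal is correct and is essentially the paper's proof. Differentiating $\tau\io\vepsx\wepsx$ and inserting the first equation of (\ref{0eps}) amounts to testing that equation by $-\vepsxx$, as in (\ref{2.2}), and you then use the same rewritings $\vepsxt=\eps\vepsxxx+\wepsx$ and $\vepsxx=\uepsxxt-\eps\uepsxxxx$ together with $\vepsxxx=\uepsxxx=0$ on $\pO$, as in (\ref{2.3})--(\ref{2.6}); your derivation of that boundary information, which the paper only asserts, is a welcome addition.
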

\proof
  Fixing $\eps\in (0,1)$, we multiply the first equation in (\ref{0eps}) by $-\vepsxx$ and integrate by parts to see that
  for all $t\in (0,\tme)$,
  \bea{2.2}
	\tau \io \vepsx \wepsxt
	+ \al \io \vepsx \wepsx
	&=& - \eps\io \vepsxx \wepsxx
	- b \io \gaeps(\Teps) \vepsxx^2
	- b \io \gaeps'(\Teps)\Tepsx \vepsx\vepsxx \nn\\
	& & - \io \gaeps(\Teps) \uepsxx \vepsxx
	- \io \gaeps'(\Teps) \Tepsx \uepsx \vepsxx,
  \eea
  and here the identities $\vepsxt=\eps\vepsxxx+\wepsx$ and $\vepsxxt = \eps \vepsxxxx+\wepsxx$ imply that
  for all $t\in (0,\tme)$,
  \bea{2.3}
	\tau \io \vepsx \wepsxt
	&=& \tau\frac{d}{dt} \io \vepsx \wepsx
	- \tau \io \big\{ \eps \vepsxxx+\wepsx\big\}\cdot \wepsx \nn\\
	&=& \tau\frac{d}{dt} \io \vepsx \wepsx
	+ \tau \eps \io \vepsxx \wepsxx
	- \tau \io \wepsx^2 \nn\\
	&=& \tau\frac{d}{dt} \io \vepsx \wepsx
	+ \tau \eps \io \vepsxx \cdot \big\{ \vepsxxt - \eps \vepsxxxx\big\}
	- \tau \io \wepsx^2 \nn\\
	&=& \tau\frac{d}{dt} \io \vepsx \wepsx
	+ \frac{\tau \eps}{2} \frac{d}{dt} \io \vepsxx^2
	+ \tau\eps^2 \io \vepsxxx^2
	- \tau \io \wepsx^2,
  \eea
  because clearly $\vepsxxx=0$ on $\pO\times (0,\tme)$.
  Similarly,
  \bea{2.4}
	\al \io \vepsx\wepsx
	&=& \al \io \vepsx\cdot\big\{ \vepsxt - \eps \vepsxxx\big\} \nn\\
	&=& \frac{\al}{2} \frac{d}{dt} \io \vepsx^2
	+ \al\eps \io \vepsxx^2
  \eea
  and
  \bea{2.5}
	- \eps \io \vepsxx\wepsxx
	&=& - \eps \io \vepsxx\cdot\big\{ \vepsxxt - \eps\vepsxxxx\big\} \nn\\
	&=& - \frac{\eps}{2} \frac{d}{dt} \io \vepsxx^2
	- \eps^2 \io \vepsxxx^2
  \eea
  for all $t\in (0,\tme)$, while using the third equation in (\ref{0eps}) we find that
  \bea{2.6}
	- \io \gaeps(\Teps) \uepsxx\vepsxx
	&=& - \io \gaeps(\Teps) \uepsxx\cdot\big\{ \uepsxxt - \eps\uepsxxxx\big\} \nn\\
	&=& - \frac{1}{2} \frac{d}{dt} \io \gaeps(\Teps)\uepsxx^2
	+ \frac{1}{2} \io \gaeps(\Teps) \Tepst \uepsxx^2 \nn\\
	& & - \eps \io \gaeps(\Teps) \uepsxxx^2
	- \eps \io \gaeps'(\Teps) \Tepsx \uepsxx \uepsxxx
  \eea
  for all $t\in (0,\tme)$.
  Inserting (\ref{2.3})-(\ref{2.6}) into (\ref{2.2}) confirms (\ref{2.1}).
\qed
Expressions containing $\uepsxx^2$, finally, can be controlled by the dissipated quantity appearing in the course
of a third testing procedure:
\begin{lem}\label{lem3}
  Assuming that $D>0, \tau>0, \al>0$ and $b>0$, and that (\ref{gaeps0}) and (\ref{ie0}) hold, for any $\eps\in (0,1)$
  and $t\in (0,\tme)$ we have
  \bea{3.1}
	& & \hs{-14mm}
	\frac{d}{dt} \bigg\{
	\tau \io \uepsx\wepsx
	- \frac{\tau}{2} \io \vepsx^2
	+ \al \io \uepsx\vepsx
	+ \frac{b}{2} \io \gaeps(\Teps) \uepsxx^2 \bigg\}
	+ \io \gaeps(\Teps) \uepsxx^2
	+ b\eps \io \gaeps(\Teps) \uepsxxx^2 \nn\\
	&=& \al \io \vepsx^2 
	+ \frac{b}{2} \io \gaeps'(\Teps)\Tepst \uepsxx^2
	- b \io \gaeps'(\Teps)\Tepsx \vepsx \uepsxx
	- \io \gaeps'(\Teps)\Tepsx \uepsx \uepsxx \nn\\
	& & - (1+\tau)\eps \io \uepsxx \wepsxx
	- 2\al\eps \io \uepsxx \vepsxx
	+ \tau\eps \io \vepsxx^2 
	- b\eps \io \gaeps'(\Teps) \Tepsx \uepsxx \uepsxxx.
  \eea
\end{lem}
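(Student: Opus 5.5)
The plan is to imitate the proof of Lemma \ref{lem2}, but now testing with $-\uepsxx$ instead of $-\vepsxx$. For fixed $\eps\in (0,1)$ we multiply the first equation in (\ref{0eps}) by $-\uepsxx$ and integrate by parts, using $\uepsx=\vepsx=\wepsx=0$ on $\pO$. This yields, for all $t\in (0,\tme)$,
\bas
	\tau \io \uepsx \wepsxt + \al \io \uepsx\wepsx
	&=& - \eps \io \uepsxx\wepsxx
	- b \io \gaeps(\Teps) \vepsxx\uepsxx
	- b \io \gaeps'(\Teps)\Tepsx \vepsx\uepsxx \\
	& & - \io \gaeps(\Teps)\uepsxx^2
	- \io \gaeps'(\Teps)\Tepsx\uepsx\uepsxx .
\eas
Each term on the left, and the second term on the right, is then rewritten as a time derivative plus remainders. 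For this we use the identities $\uepsxt=\eps\uepsxxx+\vepsx$, $\vepsxt=\eps\vepsxxx+\wepsx$ and $\vepsxx=\uepsxxt-\eps\uepsxxxx$.

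For the first term on the left, we move the time derivative onto $\uepsx$ and insert $\uepsxt=\eps\uepsxxx+\vepsx$. An integration by parts in the $\eps$-part then gives
\[
	\tau\io \uepsx\wepsxt = \tau\frac{d}{dt}\io\uepsx\wepsx + \tau\eps\io\uepsxx\wepsxx - \tau\io\vepsx\wepsx.
\]
In the last integral we write $\wepsx=\vepsxt-\eps\vepsxxx$. This produces $-\frac{\tau}{2}\frac{d}{dt}\io\vepsx^2-\tau\eps\io\vepsxx^2$.

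For the second term on the left, we write $\wepsx=\vepsxt-\eps\vepsxxx$ and then move the time derivative from $\vepsx$ onto $\uepsx$, again inserting $\uepsxt=\eps\uepsxxx+\vepsx$. This gives
\[
	\al\frac{d}{dt}\io\uepsx\vepsx+2\al\eps\io\uepsxx\vepsxx-\al\io\vepsx^2 .
\]

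For the coupling term $-b\io\gaeps(\Teps)\vepsxx\uepsxx$ we insert $\vepsxx=\uepsxxt-\eps\uepsxxxx$. This gives
\[
	-\frac b2\frac{d}{dt}\io\gaeps(\Teps)\uepsxx^2+\frac b2\io\gaeps'(\Teps)\Tepst\uepsxx^2+b\eps\io\gaeps(\Teps)\uepsxx\uepsxxxx .
\]
One more integration by parts in the last integral produces $-b\eps\io\gaeps(\Teps)\uepsxxx^2-b\eps\io\gaeps'(\Teps)\Tepsx\uepsxx\uepsxxx$.

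Collecting everything, the two contributions $\tau\eps\io\uepsxx\wepsxx$ and $-\eps\io\uepsxx\wepsxx$ combine into $-(1+\tau)\eps\io\uepsxx\wepsxx$. Rearranging then gives exactly (\ref{3.1}).

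The only point requiring care, and the one I expect to be the main (though mild) obstacle, is justifying that the boundary terms vanish in the integrations by parts involving third and fourth derivatives. This concerns in particular $\io\uepsxx\uepsxxxx$ and the $\eps$-term coming from $\uepsxt$. The key observation is that differentiating the third equation of (\ref{0eps}) in $x$ and evaluating on $\pO$, where $\uepsx=\vepsx=0$ for all $t$ (so that $\uepsxt=0$ there), yields $\eps\uepsxxx=0$, hence $\uepsxxx=0$ on $\pO\times(0,\tme)$. This parallels the use of $\vepsxxx=0$ in Lemma \ref{lem2}. All other steps are routine bookkeeping, with smoothness supplied by (\ref{reg}).
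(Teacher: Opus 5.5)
Your proposal is correct and follows the paper's proof essentially step by step: you test with $-\uepsxx$, rewrite the two left-hand terms and the $\vepsxx\uepsxx$ coupling term via $\uepsxt=\eps\uepsxxx+\vepsx$, $\wepsx=\vepsxt-\eps\vepsxxx$ and $\vepsxx=\uepsxxt-\eps\uepsxxxx$, and collect terms exactly as in (3.3)--(3.5). You also make explicit the boundary fact $\uepsxxx=0$ on $\pO$, which is needed in the integration by parts of $\io\gaeps(\Teps)\uepsxx\uepsxxxx$ and which the paper uses without comment.
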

\proof
  Given $\eps\in (0,1)$, for $t\in (0,\tme)$ we test the first equation in (\ref{0eps}) by $-\uepsxx$ to obtain the identity
  \bea{3.2}
	\tau \io \uepsx\wepsxt
	+ \al \io \uepsx\wepsx
	&=& - \eps \io \uepsxx\wepsxx 
	- b\io \gaeps(\Teps) \uepsxx \vepsxx
	- b\io \gaeps'(\Teps)\Tepsx \vepsx\uepsxx \nn\\
	& & - \io \gaeps(\Teps) \uepsxx^2
	- \io \gaeps'(\Teps)\Tepsx \uepsx \uepsxx,
  \eea
  in which using that $\uepsxt=\eps\uepsxxx + \vepsx$ and $\wepsx=\vepsxt - \eps\vepsxxx$ we see that
  \bea{3.3}
	\tau \io \uepsx\wepsxt 
	&=& \tau\frac{d}{dt} \io \uepsx\wepsx
	- \tau \io \uepsxt \wepsx \nn\\
	&=& \tau\frac{d}{dt} \io \uepsx\wepsx
	- \tau \eps \io \uepsxxx\wepsx
	- \tau \io \vepsx \wepsx \nn\\
	&=& \tau\frac{d}{dt} \io \uepsx\wepsx
	+ \tau \eps \io \uepsxx\wepsxx
	- \tau \io \vepsx \vepsxt 
	+ \tau\eps \io \vepsx \vepsxxx \nn\\
	&=& \tau\frac{d}{dt} \io \uepsx\wepsx
	+ \tau \eps \io \uepsxx\wepsxx 
	- \frac{\tau}{2} \frac{d}{dt} \io \vepsx^2
	- \tau\eps \io \vepsxx^2
  \eea
  for all $t\in (0,\tme)$.
  Likweise,
  \bea{3.4}
	\al \io \uepsx\wepsx
	&=& \al \io \uepsx \vepsxt
	- \al\eps \io \uepsx \vepsxxx \nn\\
	&=& \al\frac{d}{dt} \io \uepsx\vepsx
	- \al \io \uepsxt \vepsx
	+ \al\eps \io \uepsxx\vepsxx \nn\\
	&=& \al\frac{d}{dt} \io \uepsx\vepsx
	- \al \eps \io \uepsxxx \vepsx
	- \al \io \vepsx^2 
	+ \al\eps \io \uepsxx\vepsxx \nn\\
	&=& \al\frac{d}{dt} \io \uepsx\vepsx
	- \al \io \vepsx^2 
	+ 2 \al\eps \io \uepsxx\vepsxx 
	\qquad \mbox{for all } t\in (0,\tme),
  \eea
  while on the right-hand side of (\ref{3.2}),
  \bea{3.5}
	- b \io \gaeps(\Teps) \uepsxx \vepsxx
	&=& - b \io \gaeps(\Teps) \uepsxx\uepsxxt
	+ b\eps\io \gaeps(\Teps) \uepsxx \uepsxxxx \nn\\
	&=& - \frac{b}{2} \frac{d}{dt} \io \gaeps(\Teps) \uepsxx^2
	+ \frac{b}{2} \io \gaeps'(\Teps) \Tepst \uepsxx^2 \nn\\
	& & - b\eps \io \gaeps(\Teps) \uepsxxx^2
	- b\eps \io \gaeps'(\Teps) \Tepsx \uepsxx \uepsxxx
  \eea
  for all $t\in (0,\tme)$.
  Combining (\ref{3.3})-(\ref{3.5}) with (\ref{3.2}) yields (\ref{3.1}).
\qed
\mysection{A conditional energy property of the mechanical part}
By combining the results from the previous section, we can now characterize the evolution properties of a two-parameter 
family of candidates for an energy functional,
constructed by suitably adapting the expression in (\ref{en}) to the regularized framework of (\ref{0eps}):
\begin{lem}\label{lem4}
  Suppose that $D>0, \tau>0, \al>0$ and $b>0$, assume (\ref{gaeps0}) and (\ref{ie0}), 
  and for $B>0$, $\del>0$ and $\eps\in (0,1)$, let
  \bea{y}
	\yeps(t) \equiv \yeps^{(B,\del)}(t)
	&:=& \frac{\tau}{2} \io \wepsx^2
	+ \frac{b}{2} \io \gaeps(\Teps) \vepsxx^2
	+ \frac{B+b\del}{2} \io \gaeps(\Teps) \uepsxx^2
	+ \frac{\al B - \tau\del}{2} \io \vepsx^2 \nn\\
	& & + \io \gaeps(\Teps) \uepsxx\vepsxx
	+ \eps \io \gaeps(\Teps) \uepsxxx^2
	+ \frac{(1+\tau)B\eps}{2} \io \vepsxx^2 \nn\\
	& & + \tau B \io \vepsx\wepsx
	+ \tau\del \io \uepsx\wepsx
	+ \al\del \io \uepsx\vepsx,
	\qquad t\in [0,\tme).
  \eea
  Then
  \bea{4.1}
	& & \hs{-20mm}
	\yeps'(t)
	+ (\al-\tau B) \io \wepsx^2
	+ (bB-1) \io \gaeps(\Teps) \vepsxx^2
	+ \del \io \gaeps(\Teps) \uepsxx^2 \nn\\
	& & + \eps \io \wepsxx^2
	+ b\eps \io \gaeps(\Teps) \vepsxxx^2
	+ (1+\tau) B\eps^2 \io \vepsxxx^2
	+ \al B\eps \io \vepsxx^2 \nn\\
	& & + (B+b\del) \eps \io \gaeps(\Teps) \uepsxxx^2
	+ 2\eps^2 \io \gaeps(\Teps) \uepsxxxx^2 \nn\\
	&=& \al\del \io \vepsx^2 \nn\\
	& & + \frac{b}{2} \io \gaeps'(\Teps) \Tepst \vepsxx^2
	+ \io \gaeps'(\Teps) \Tepst \uepsxx \vepsxx
	+ \frac{B+b\del}{2} \io \gaeps'(\Teps) \Tepst \uepsxx^2 \nn\\
	& & + b\io \gaeps'(\Teps) \Tepsx \vepsxx \wepsx
	+ \io \gaeps'(\Teps) \Tepsx \uepsxx\wepsx \nn\\
	& & + b\io \gaeps'(\Teps)\Tepsxx \vepsx \wepsx
	+ \io \gaeps'(\Teps) \Tepsxx \uepsx \wepsx \nn\\
	& & + b\io \gaeps''(\Teps) \Tepsx^2 \vepsx\wepsx
	+ \io \gaeps''(\Teps) \Tepsx^2 \uepsx\wepsx \nn\\
	& & - bB \io \gaeps'(\Teps) \Tepsx \vepsx \vepsxx
	- B \io \gaeps'(\Teps)\Tepsx \uepsx\vepsxx \nn\\
	& & - b\del \io \gaeps'(\Teps)\Tepsx \vepsx \uepsxx
	- \del \io \gaeps'(\Teps)\Tepsx\uepsx\uepsxx \nn\\
	& & + \eps \io \gaeps'(\Teps) \Tepst \uepsxxx^2 
	- b\eps \io \gaeps'(\Teps) \Tepsx \vepsxx\vepsxxx
	- \eps \io \gaeps'(\Teps)\Tepsx \uepsxx \vepsxxx \nn\\
	& & - \eps \io \gaeps'(\Teps) \Tepsx \uepsxxx \vepsxx 
	-(B+b\del)\eps \io \gaeps'(\Teps) \Tepsx \uepsxx\uepsxxx
	\nn\\
	& & - 2\eps^2 \io \gaeps'(\Teps) \Tepsx \uepsxxx\uepsxxxx 
	- (1+\tau)\del\eps \io \uepsxx\wepsxx
	- 2\al\del\eps \io \uepsxx\vepsxx \nn\\
	& & + \tau\del\eps \io \vepsxx^2
	\qquad \mbox{for all $t\in (0,\tme)$.}
  \eea
\end{lem}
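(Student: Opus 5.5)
The identity (\ref{4.1}) is a linear combination of the three identities already at hand: (\ref{1.1}) from Lemma \ref{lem1}, (\ref{2.1}) from Lemma \ref{lem2}, and (\ref{3.1}) from Lemma \ref{lem3}. I would take (\ref{1.1}) with weight $1$, (\ref{2.1}) with weight $B$, and (\ref{3.1}) with weight $\del$, and add the resulting equalities for fixed $\eps\in(0,1)$ and $t\in(0,\tme)$.

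The first step is to check that the functionals under the time derivatives sum to $\yeps$ as defined in (\ref{y}).
\begin{itemize}
\item The terms $\frac{\tau}{2}\io\wepsx^2$, $\frac b2\io\gaeps(\Teps)\vepsxx^2$, $\io\gaeps(\Teps)\uepsxx\vepsxx$ and $\eps\io\gaeps(\Teps)\uepsxxx^2$ all come from (\ref{1.1}).
\item The terms $\tau B\io\vepsx\wepsx$ and $\frac{(1+\tau)B\eps}{2}\io\vepsxx^2$ come from $B\cdot$(\ref{2.1}).
\item The terms $\tau\del\io\uepsx\wepsx$ and $\al\del\io\uepsx\vepsx$ come from $\del\cdot$(\ref{3.1}).
\item The coefficient of $\io\vepsx^2$ is $\frac{\al B}{2}-\frac{\tau\del}{2}$, collecting contributions from (\ref{2.1}) and (\ref{3.1}).
\item The coefficient of $\io\gaeps(\Teps)\uepsxx^2$ is $\frac B2+\frac{b\del}{2}$, again from (\ref{2.1}) and (\ref{3.1}).
\end{itemize}
Together these reproduce (\ref{y}) exactly.

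The second step is to move the non-small quadratic terms from the right-hand sides to the left and collect coefficients.
\begin{itemize}
\item $\io\wepsx^2$: (\ref{1.1}) contributes $+\al$ on the left, and $B\cdot$(\ref{2.1}) contributes $\tau B$ on the right. This yields $(\al-\tau B)\io\wepsx^2$ on the left.
\item $\io\gaeps(\Teps)\vepsxx^2$: it appears with weight $1$ on the right of (\ref{1.1}) and with $bB$ on the left of $B\cdot$(\ref{2.1}), giving $(bB-1)$.
\item $\io\gaeps(\Teps)\uepsxx^2$: $\del\cdot$(\ref{3.1}) contributes $\del$ on the left.
\item $\io\vepsx^2$: the term $\al\del\io\vepsx^2$ from (\ref{3.1}) stays on the right.
\end{itemize}

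The third step is the $\eps$-dissipation terms, which are the only place needing care.
\begin{itemize}
\item $\io\gaeps(\Teps)\uepsxxx^2$: the coefficient $B\eps$ from (\ref{2.1}) and $b\del\eps$ from (\ref{3.1}) combine to $(B+b\del)\eps$.
\item $(1+\tau)B\eps^2\io\vepsxxx^2$ and $\al B\eps\io\vepsxx^2$ come from $B\cdot$(\ref{2.1}).
\item $\eps\io\wepsxx^2$, $b\eps\io\gaeps(\Teps)\vepsxxx^2$ and $2\eps^2\io\gaeps(\Teps)\uepsxxxx^2$ come from (\ref{1.1}).
\item The remaining $\eps$-terms from (\ref{3.1}), multiplied by $\del$, stay on the right: $-(1+\tau)\del\eps\io\uepsxx\wepsxx$, $-2\al\del\eps\io\uepsxx\vepsxx$ and $+\tau\del\eps\io\vepsxx^2$.
\end{itemize}

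Finally, the $\gaeps'$ and $\gaeps''$ cross terms are simply listed.
\begin{itemize}
\item $\Tepst$-terms: $\frac b2$ and $1$ from (\ref{1.1}); the $\uepsxx^2$-term collects $\frac B2$ from (\ref{2.1}) and $\frac{b\del}2$ from (\ref{3.1}), giving $\frac{B+b\del}2$.
\item $\Tepsx$-terms with $\vepsxx$: $-bB$ and $-B$ from (\ref{2.1}).
\item $\Tepsx$-terms with $\uepsxx$: $-b\del$ and $-\del$ from (\ref{3.1}).
\item $\eps\,\Tepsx\uepsxx\uepsxxx$: $-B\eps$ from (\ref{2.1}) and $-b\del\eps$ from (\ref{3.1}), totalling $-(B+b\del)\eps$.
\item All others are copied unchanged from (\ref{1.1}).
\end{itemize}

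There is no genuine obstacle here; the only risk is bookkeeping. The main care point is the consistency of (\ref{2.6}): its printed term $\frac12\io\gaeps(\Teps)\Tepst\uepsxx^2$ should read $\frac12\io\gaeps'(\Teps)\Tepst\uepsxx^2$. With that reading, (\ref{2.1}) is as stated and the coefficients match (\ref{4.1}).
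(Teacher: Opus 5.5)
Your proposal is correct and is the paper's argument: the paper proves (\ref{4.1}) by adding (\ref{1.1}), $B$ times (\ref{2.1}) and $\del$ times (\ref{3.1}), and your coefficient bookkeeping checks out, including the merged terms $\frac{B+b\del}{2}\io\gaeps'(\Teps)\Tepst\uepsxx^2$ and $-(B+b\del)\eps\io\gaeps'(\Teps)\Tepsx\uepsxx\uepsxxx$. You are also right that the factor $\gaeps(\Teps)$ in the second term on the right of (\ref{2.6}) is a misprint for $\gaeps'(\Teps)$, and this does not affect (\ref{2.1}).
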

\proof
  This immediately results upon combining Lemma \ref{lem1} with Lemma \ref{lem2} and Lemma \ref{lem3}.
\qed
Now the stability condition in (\ref{12.01}) enters, ensuring that for some appropriately chosen $B>0$, and any suitably
small $\del>0$, the functional in (\ref{y}) admits a two-sided estimate as long as the behavior of $\gaeps(\Teps)$
can suitably be controlled:
\begin{lem}\label{lem5}
  Assume that $\tau>0$, $\al>0$ and $b>0$ satisfy (\ref{12.01}), 
  let $B>0$ be such that
  \be{5.1}
	\frac{1}{b} < B < \frac{\al}{\tau},
  \ee
  and suppose that $\ugs>0$ and $\ogs>\ugs$. 
  Then there exist $\del_1=\del_1(\tau,\al,b,B,\ugs,\ogs)>0$ as well as $k_i=k_i(\tau,\al,b,B,\ugs,\ogs)>0$, $i\in\{1,2\}$, with the 
  property that if $\del\in (0,\del_1]$ and $D>0$, and if (\ref{gaeps0}) and (\ref{ie0}) hold and $\eps\in (0,1)$ 
  as well as $T\in (0,\tme)$ are such that
  \be{5.2}
	\ugs \le \gaeps(\Teps) \le \ogs
	\qquad \mbox{in } \Om\times (0,T),
  \ee
  then
  \bea{5.3}
	& & \hs{-40mm}
	k_1 \io \wepsx^2 + k_1 \io \vepsxx^2 + k_1 \io \uepsxx^2 + k_1 \eps \io \uepsxxx^2 \nn\\
	&\le& \yeps^{(B,\del)}(t) \nn\\
	&\le& k_2 \io \wepsx^2
	+ k_2 \io \vepsxx^2
	+ k_2 \io \uepsxx^2
	+ k_2\eps \io \uepsxxx^2
  \eea
  for all $t\in (0,T)$.
\end{lem}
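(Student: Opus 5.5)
The plan is to treat $\yeps^{(B,\del)}(t)$ as a quadratic form in the quantities $\wepsx,\vepsx,\vepsxx,\uepsx,\uepsxx$ (plus the harmless $\eps$-terms). I first handle the case $\del=0$ using the two inequalities in (\ref{5.1}), and then absorb all $\del$-dependent contributions by taking $\del$ small. Throughout I use (\ref{5.2}) to replace $\gaeps(\Teps)$ by $\ugs$ or $\ogs$. I also use two Poincaré-type inequalities. Since $\uepsx=\vepsx=0$ on $\pO$, the functions $\uepsx$ and $\vepsx$ lie in $W^{1,2}_0(\Om)$, so there is $c_P=c_P(\Om)>0$ with $\io\uepsx^2\le c_P\io\uepsxx^2$ and $\io\vepsx^2\le c_P\io\vepsxx^2$.

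First I group the $\del$-independent main terms into two blocks. The first block is
\[
\frac{\tau}{2}\io\wepsx^2+\tau B\io\vepsx\wepsx+\frac{\al B}{2}\io\vepsx^2 .
\]
It is a pointwise quadratic form in $(\wepsx,\vepsx)$ with determinant $\frac{\tau\al B}{4}-\frac{\tau^2B^2}{4}=\frac{\tau B}{4}(\al-\tau B)>0$, by the right inequality in (\ref{5.1}). Hence it is bounded below by $c_1\io(\wepsx^2+\vepsx^2)$, with $c_1>0$ depending only on $\tau,\al,B$. The second block is
\[
\io\gaeps(\Teps)\Big\{\frac b2\vepsxx^2+\uepsxx\vepsxx+\frac B2\uepsxx^2\Big\}.
\]
Its determinant is $\frac{bB-1}{4}>0$, by the left inequality in (\ref{5.1}). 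Thanks to $\gaeps(\Teps)\ge\ugs$, it is bounded below by $c_2\ugs\io(\vepsx^2\!\!\!\!\!\quad)$, more precisely by $c_2\ugs\io(\vepsxx^2+\uepsxx^2)$, with $c_2=c_2(b,B)>0$. The term $\eps\io\gaeps(\Teps)\uepsxxx^2$ lies between $\eps\ugs\io\uepsxxx^2$ and $\eps\ogs\io\uepsxxx^2$. The term $\frac{(1+\tau)B\eps}{2}\io\vepsxx^2$ is nonnegative and bounded by $\frac{(1+\tau)B}{2}\io\vepsxx^2$ since $\eps<1$.

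Next come the $\del$-terms. The term $\frac{b\del}{2}\io\gaeps(\Teps)\uepsxx^2\ge0$ can be dropped from below. The remaining terms are $-\frac{\tau\del}{2}\io\vepsx^2$, $\tau\del\io\uepsx\wepsx$ and $\al\del\io\uepsx\vepsx$. By Young's inequality and the Poincaré bounds above, their absolute value is at most
\[
\del\,c_3\io\big(\wepsx^2+\vepsxx^2+\uepsxx^2\big),
\qquad c_3=c_3(\tau,\al,\Om).
\]
Choosing $\del_1$ so that $\del_1c_3\le\frac12\min\{c_1,c_2\ugs\}$ then gives the left inequality in (\ref{5.3}), with $k_1$ the resulting minimum, also capped by $\ugs$ for the $\eps$-term. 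The dependence of $c_3$ on $\Om$ is acceptable, since $\Om$ is fixed throughout.

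For the upper bound, I estimate every term by Young's inequality. I use $\gaeps(\Teps)\le\ogs$, the bound $\del\le\del_1$, and the Poincaré bound $\io\vepsx^2\le c_P\io\vepsxx^2$ together with its counterpart for $\uepsx$. This removes all first-order quantities and yields the right inequality in (\ref{5.3}).

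The only genuinely delicate point is the lower bound. The first block controls $\vepsx$ but not $\vepsxx$, so the negative and indefinite $\del$-terms must be absorbed. This works only because Poincaré's inequality converts the $\uepsx$- and $\vepsx$-norms into the $\uepsxx$- and $\vepsxx$-norms, which are coercively controlled by the second block. For this reason $\del_1$ must depend on $\ugs$ as well as on $\tau,\al,b,B$.
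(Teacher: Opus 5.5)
Your proposal is correct and follows essentially the paper's route. The paper, like you, gets coercivity of the $(\wepsx,\vepsx)$ block from $B<\frac{\al}{\tau}$ and of the $(\vepsxx,\uepsxx)$ block from $B>\frac1b$ (it uses Young's inequality with auxiliary constants $b_1<b$, $B_1<B$ rather than your determinant criterion), and then absorbs the $\del$-dependent terms for small $\del$ via Poincar\'e on $\uepsx,\vepsx\in W^{1,2}_0(\Om)$. The one minor difference is that the paper turns the two $\uepsx$-cross terms into $O(\del^2)\io\uepsx^2$ and absorbs them into the extra summand $\frac{b\del}{2}\io\gaeps(\Teps)\uepsxx^2$, whereas you drop that summand and absorb $O(\del)$ contributions into the $\del$-independent coercivity constants; both work.
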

\proof
  Let us first recall that a Poincar\'e inequality states that
  \be{5.33}
	\io \vp^2 \le c_1 \io \vp_x^2
	\qquad \mbox{for all } \vp\in W_0^{1,2}(\Om)
  \ee
  with $c_1:=\frac{|\Om|^2}{\pi^2}$.
  Apart from that, relying on the strictness of both inequalities in (\ref{5.1}) we pick $b_1=b_1(\tau,\al,b,B)\in (0,b)$
  and $B_1=B_1(\tau,\al,b,B)\in (0,B)$ such that
  \be{5.4}
	\frac{1}{b_1} < B
	\qquad \mbox{and} \qquad
	\frac{B}{B_1} \cdot B < \frac{\al}{\tau},
  \ee
  where the latter particularly ensures that
  \be{5.50}
	c_2\equiv c_2(\tau,\al,b,B):=\frac{\tau}{2} \cdot \Big(1-\frac{\tau B^2}{\al B_1}\Big)
  \ee
  is positive.
  Given $\ugs>0$ and $\ogs>\ugs$, we can therefore choose $\del_1=\del_1(\tau,\al,b,B,\ugs,\ogs)>0$ small enough fulfilling
  \be{5.51}
	\tau\del \le \frac{\al(B-B_1)}{2}
  \ee
  and
  \be{5.52}
	\Big(\frac{\tau^2}{2c_2} + \frac{\al}{B-B_1} \Big) \cdot \del^2 \cdot c_1
	\le \frac{b\ugs}{2} \cdot \del
  \ee
  for all $\del\in (0,\del_1]$.
  We then let 
  \be{5.6}
	k_1\equiv k_1(\tau,\al,b,B,\ugs,\ogs)
	:=\min \bigg\{ \frac{c_2}{2} \, , \, \frac{(b-b_1)\ugs}{2} \, , \, \frac{(Bb_1-1)\ugs}{2b_1} \, , \, \ugs \bigg\}
  \ee
  as well as
  \bea{5.65}
	k_2\equiv k_2(\tau,\al,b,B,\ugs,\ogs)
	&:=& \max \bigg\{ \frac{\tau}{2} + \frac{\tau^2 B^2}{2\al B_1} + \frac{c_2}{2} \, , \, 
	\frac{(b+b_1)\ogs}{2} + \frac{(1+\tau)B}{2} + \frac{c_1\al (3B+B_1)}{4} \, , \,  \nn\\
	& & \hs{12mm}
	\frac{(B+b\del)\ogs}{2} + \frac{\ogs}{2b_1} + \frac{b\del\ugs}{2} \, , \, 
	\ogs \bigg\},
  \eea
  noting that $k_1$ is positive thanks to the first inequality in (\ref{5.4}).\abs
  Now assuming that $\del\in (0,\del_1]$, and that (\ref{5.2}) be valid with some $\eps\in (0,1)$ and $T\in (0,\tme)$,
  in (\ref{y}) we use Young's inequality to estimate
  \be{5.66}
	\bigg| \io \gaeps(\Teps) \uepsxx\vepsxx\bigg|
	\le \frac{b_1}{2} \io \gaeps(\Teps) \vepsxx^2
	+ \frac{1}{2b_1} \io \gaeps(\Teps) \uepsxx^2
	\qquad \mbox{for all } t\in (0,T),
  \ee
  so that since $\frac{b_1}{2} < \frac{b}{2}$ and $\frac{1}{2b_1} < \frac{B}{2} \le \frac{B+b\del}{2}$, (\ref{5.2}) applies
  so as to warrant that
  \bea{5.7}
	& & \hs{-20mm}
	\frac{b}{2} \io \gaeps(\Teps) \vepsxx^2
	+ \frac{B+b\del}{2} \io \gaeps(\Teps) \uepsxx^2
	+ \io \gaeps(\Teps) \uepsxx\vepsxx \nn\\
	&\ge& \Big(\frac{b}{2}-\frac{b_1}{2}\Big) \io \gaeps(\Teps) \vepsxx^2
	+ \Big(\frac{B+b\del}{2}-\frac{1}{2b_1}\Big) \io \gaeps(\Teps)\uepsxx^2 \nn\\
	&\ge& \frac{(b-b_1)\ugs}{2} \io \vepsxx^2
	+ \frac{(Bb_1-1)\ugs}{2b_1} \io \uepsxx^2
	+ \frac{b\del\ugs}{2} \io \uepsxx^2
	\qquad \mbox{for all } t\in (0,T).
  \eea
  Next, again by Young's inequality,
  \be{5.77}
	\bigg| \tau B \io \vepsx\wepsx \bigg|
	\le \frac{\al B_1}{2} \io \vepsx^2
	+ \frac{\tau^2 B^2}{2\al B_1} \io \wepsx^2
	\qquad \mbox{for all } t\in (0,T),
  \ee
  whence for all $t\in (0,T)$,
  \bea{5.8}
	\frac{\tau}{2} \io \wepsx^2
	+ \frac{\al B-\tau\del}{2} \io \vepsx^2
	+ \tau B \io \vepsx\wepsx
	&\ge& \frac{\tau}{2}\cdot\Big( 1-\frac{\tau B^2}{\al B_1}\Big) \io \wepsx^2
	+ \frac{\al(B-B_1) -\tau\del}{2} \io \vepsx^2 \nn\\
	&\ge& c_2 \io \wepsx^2
	+ \frac{\al(B-B_1)}{4} \io \vepsx^2
  \eea
  due to (\ref{5.50}) and (\ref{5.51}).
  Finally, two more times applying Young's inequality we see that
  \be{5.86}
	\bigg| \tau\del \io \uepsx\wepsx \bigg|
	\le \frac{c_2}{2} \io \wepsx^2
	+ \frac{\tau^2 \del^2}{2c_2} \io \uepsx^2
	\qquad \mbox{for all } t\in (0,T)
  \ee
  and
  \be{5.87}
	\bigg| \al \del \io \uepsx\vepsx \bigg|
	\le \frac{\al(B-B_1)}{4} \io \vepsx^2
	+ \frac{\al\del^2}{B-B_1} \io \uepsx^2
	\qquad \mbox{for all } t\in (0,T),
  \ee
  so that in line with (\ref{5.8}), the sum of the first-order expressions on the right of (\ref{5.4}) can all in all be controlled 
  according to
  \bea{5.9}
	& & \hs{-20mm}
	\frac{\tau}{2} \io \wepsx^2
	+ \frac{\al B-\tau\del}{2} \io \vepsx^2
	+ \tau B \io \vepsx\wepsx
	+ \tau\del \io \uepsx\wepsx
	+ \al\del \io \uepsx\vepsx \nn\\
	&\ge& \frac{c_2}{2} \io \wepsx^2
	- \Big(\frac{\tau^2}{2c_2} + \frac{\al}{B-B_1}\Big)\del^2 \io \uepsx^2
	\qquad \mbox{for all } t\in (0,T).
  \eea
  Since (\ref{5.33}) along with (\ref{5.52}) guarantees that
  \be{5.10}
	\Big(\frac{\tau^2}{2c_2} + \frac{\al}{B-B_1}\Big)\del^2 \io \uepsx^2
	\le
	\Big(\frac{\tau^2}{2c_2} + \frac{\al}{B-B_1}\Big)\del^2 c_1 \io \uepsxx^2
	\le \frac{b\del\ugs}{2} \io \uepsxx^2
	\qquad \mbox{for all } t\in (0,T),
  \ee
  and since (\ref{5.2}) clearly implies that
  \bas
	\eps \io \gaeps(\Teps) \uepsxxx^2
	\ge \ugs \eps \io \uepsxxx^2
	\qquad \mbox{for all } t\in (0,T),
  \eas
  by combining (\ref{5.7}) with (\ref{5.9}) we thus infer upon dropping a nonnegative summand in (\ref{y}) that
  \bas
	\yeps(t)
	\ge \frac{c_2}{2} \io \wepsx^2
	+ \frac{(b-b_1)\ugs}{2} \io \vepsxx^2
	+ \frac{(Bb_1-1)\ugs}{2b_1} \io \uepsxx^2
	+ \ugs \eps \io \uepsxxx^2
	\qquad \mbox{for all } t\in (0,T),
  \eas
  which in view of (\ref{5.6}) yields the first inequality in (\ref{5.3}).\abs
  To verify the second relation therein, we simply use the upper bound in (\ref{5.2}) in confirming by means of (\ref{5.66}),
  (\ref{5.77}), (\ref{5.86}) and (\ref{5.87}) that
  \bas
	\yeps(t)
	&\le& \frac{\tau}{2} \io \wepsx^2
	+ \frac{b\ogs}{2} \io \vepsxx^2
	+ \frac{(B+b\del)\ogs}{2} \io \uepsxx^2
	+ \frac{\al B}{2} \io \vepsx^2
	+ \frac{b_1\ogs}{2} \io \vepsxx^2
	+ \frac{\ogs}{2b_1} \io \uepsxx^2 \nn\\
	& & + \ogs \eps \io \uepsxxx^2
	+ \frac{(1+\tau)B\eps}{2} \io \vepsxx^2
	+ \frac{\al B_1}{2} \io \vepsx^2
	+ \frac{\tau^2 B^2}{2\al B_1} \io \wepsx^2 \nn\\
	& & + \frac{c_2}{2} \io \wepsx^2
	+ \frac{\tau^2 B^2}{2c_2} \io \uepsx^2
	+ \frac{\al(B-B_1)}{4} \io \vepsx^2
	+ \frac{\al \del^2}{B-B_1} \io \uepsx^2 \nn\\
	&=& \Big(\frac{\tau}{2} + \frac{\tau^2 \delta^2}{2\al B_1} + \frac{c_2}{2}\Big) \io \wepsx^2
	+ \Big( \frac{(b+b_1)\ogs}{2} + \frac{(1+\tau)B\eps}{2}\Big) \io \vepsxx^2 \nn\\
	& & + \Big( \frac{(B+b\delta)\ogs}{2} + \frac{\ogs}{2b_1}\Big) \io \uepsxx^2 
	+ \ogs\eps \io \uepsxxx^2 \nn\\
	& & + \frac{\al (3B+B_1)}{4} \io \vepsx^2
	+ \Big(\frac{\tau^2}{2c_2} + \frac{\al}{B-B_1}\Big) \del^2 \io \uepsx^2
	\qquad \mbox{for all } t\in (0,T).
  \eas
  Again using (\ref{5.10}), and employing (\ref{5.33}) to estimate
  \bas
	\frac{\al(3B+B_1)}{4} \io \vepsx^2
	\le \frac{c_1\al(3B+B_1)}{4} \io \vepsxx^2
	\qquad \mbox{for all } t\in (0,T),
  \eas
  from this we readily obtain the second inequality in (\ref{5.3}) with $k_2$ as in (\ref{5.65}), because
  $\frac{(1+\tau)B\eps}{2} \le \frac{(1+\tau)B}{2}$.
\qed
This property can be used to turn (\ref{4.1}) into a genuine Lyapunov-type differential inequality, still under an assumption
on the unknown solution itself but now referring to the behavior of $\Teps$ itself rather than an estimate of the form in (\ref{5.2}).
\begin{lem}\label{lem6}
  Assume that $\tau>0$, $\al>0$, $b>0$ and $B>0$ are such that (\ref{5.1}) holds,
  that $\gamma$ satisfies (\ref{gamma}), and that $\Ths>0$.
  Then there exist $\ugs=\ugs(\Ths,\gamma)>0$, $\ogs=\ogs(\Ths,\gamma)>\ugs$ and $\Gas=\Gas(\Ths,\gamma)>0$ 
  such that with $\del_1=\del_1(\tau,\al,b,B,\ugs(\Ths,\gamma),\ogs(\Ths,\gamma))$ as accordingly provided
  by Lemma \ref{lem5}, one can find $\del=\del(\tau,\al,b,B,\Ths,\gamma)\in (0,\del_1]$, 
  $\eta=\eta(\tau,\al,b,B,\Ths,\gamma) \in (0,1]$
  and $\kappa=\kappa(\tau,\al,b,B,\Ths,\gamma)>0$
  such that if (\ref{gaeps0}) and (\ref{gaepsc}) is satisfied, then
  for some $\epss=\epss\big(\Ths,(\gaeps)_{\eps\in (0,1)} \big) \in (0,1)$ it follows that
  whenever $D>0$ and (\ref{ie0}) holds and $\eps\in (0,\epss)$ as well as $T\in (0,\tme)$ have the property that
  \be{te}
	\Teps \le 2\Ths
	\quad \mbox{and} \quad
	|\Tepsx| + |\Tepsxx| + |\Tepst| \le \eta
	\qquad \mbox{in } \Om\times (0,T),
  \ee
  for $\yeps=\yeps^{(B,\del)}$ as in (\ref{y}) we have
  \be{6.3}
	\yeps'(t) + \kappa \yeps(t) + \frac{b\ugs}{2} \eps \io \vepsxxx^2 \le 0
	\qquad \mbox{for all } t\in (0,T),
  \ee
  and moreover (\ref{5.2}) holds as well as
  \be{6.33}
	|\gaeps'(\Teps)| \le \Gas
	\qquad \mbox{in } \Om\times (0,T).
  \ee
\end{lem}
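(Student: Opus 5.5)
First I fix the constants coming from $\gamma$ alone. Since $\gamma\in C^2([0,\infty))$ is positive, I set
\[
\ugs:=\tfrac12\min_{[0,2\Ths]}\gamma,\qquad \ogs:=2\max_{[0,2\Ths]}\gamma+1,\qquad \Gas:=\max_{[0,2\Ths]}|\gamma'|+\max_{[0,2\Ths]}|\gamma''|+1 .
\]
By (\ref{gaepsc}) there is $\epss\in(0,1)$ such that for all $\eps<\epss$ we have $\ugs\le\gaeps\le\ogs$ and $|\gaeps'|+|\gaeps''|\le\Gas$ on $[0,2\Ths]$. Because $0\le\Teps\le 2\Ths$ under (\ref{te}), this gives (\ref{5.2}) and (\ref{6.33}), together with a bound $|\gaeps''(\Teps)|\le\Gas$. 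Next I choose $\del\in(0,\del_1]$ by requiring, in addition, $\al\del c_1\le\frac{bB-1}{4}\ugs$. Here $c_1$ is the Poincar\'e constant from (\ref{5.33}), which is applicable to $\vepsx$ because $\vepsx$ vanishes on $\pO$. With this choice the term $\al\del\io\vepsx^2$ on the right of (\ref{4.1}) is absorbed into a quarter of $(bB-1)\io\gaeps(\Teps)\vepsxx^2$.

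After this absorption, the left of (\ref{4.1}) contains the dissipation
\[
\mathcal D_\eps:=c_3\Big(\io\wepsx^2+\io\vepsxx^2+\io\uepsxx^2\Big)+\eps\io\wepsxx^2+b\ugs\eps\io\vepsxxx^2+(B+b\del)\ugs\eps\io\uepsxxx^2+2\ugs\eps^2\io\uepsxxxx^2 ,
\]
where $c_3:=\min\{\al-\tau B,\frac{bB-1}{2}\ugs,\del\ugs\}>0$ by (\ref{5.1}). The strategy is to bound every remaining term on the right of (\ref{4.1}) by either $C\eta\,\mathcal D_\eps$ or $C\del\eps\,(\cdots)$, with $\eta$ small.

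For the terms containing $\gaeps'$ or $\gaeps''$, I use (\ref{te}) pointwise: $|\Tepst|,|\Tepsx|,|\Tepsxx|\le\eta$ and $\Tepsx^2\le\eta$ because $\eta\le1$. Young's inequality and Poincar\'e, which lowers $\io\uepsx^2$ and $\io\vepsx^2$ to $\io\uepsxx^2$ and $\io\vepsxx^2$, then give a bound $C(B,\Gas,\ldots)\,\eta\,\mathcal D_\eps$ for the non-$\eps$ terms. The $\eps$-weighted terms are handled the same way: $\eps\Tepsx\uepsxx\vepsxxx$, $\eps\Tepsx\uepsxxx\vepsxx$ and $\eps^2\Tepsx\uepsxxx\uepsxxxx$ are all bounded by $\eta$ times sums of $\eps\io\vepsxxx^2$, $\eps\io\uepsxxx^2$, $\eps^2\io\uepsxxxx^2$ and $\eps\io\uepsxx^2$, $\eps\io\vepsxx^2$.

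The main obstacle is the last three $\del\eps$-terms, which carry no factor $\eta$. I handle them as follows.
\begin{itemize}
\item $\tau\del\eps\io\vepsxx^2$: this is dominated by $\al B\eps\io\vepsxx^2$ on the left, provided $\tau\del\le\frac{\al B}{2}$, which follows from (\ref{5.51}).
\item $2\al\del\eps\io\uepsxx\vepsxx$: Young's inequality splits it into $\frac{\al B}{4}\eps\io\vepsxx^2$ plus $C\del^2\eps\io\uepsxx^2$. The latter is absorbed into $\del\ugs\io\uepsxx^2$ once $\eps<1$ and $\del$ is small.
\item $(1+\tau)\del\eps\io\uepsxx\wepsxx$: Young's inequality gives $\frac{\eps}{2}\io\wepsxx^2+C\del^2\eps\io\uepsxx^2$, absorbed as in the previous item.
\end{itemize}
This may force a smaller $\del$ (or $\epss$), but only depending on the admissible parameters. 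I also keep half of $b\ugs\eps\io\vepsxxx^2$ in reserve, so that the term $\frac{b\ugs}{2}\eps\io\vepsxxx^2$ in (\ref{6.3}) survives.

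Finally I choose $\eta$ so small that $C\eta\le\frac12$, and obtain
\[
\yeps'+\tfrac{c_3}{2}\Big(\io\wepsx^2+\io\vepsxx^2+\io\uepsxx^2\Big)+\tfrac{(B+b\del)\ugs}{2}\eps\io\uepsxxx^2+\tfrac{b\ugs}{2}\eps\io\vepsxxx^2\le0 .
\]
The upper bound in (\ref{5.3}) from Lemma \ref{lem5} gives $\yeps\le k_2(\cdots)$, so (\ref{6.3}) follows with $\kappa:=\min\{c_3,(B+b\del)\ugs\}/(2k_2)$.

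One point needs care in the estimates: the terms with $\Tepsxx\uepsx\wepsx$ require Poincar\'e twice, once for $\uepsx$ and once for $\uepsxx$. Since $\io\ueps=0$ is not needed for $\uepsx$ (only $\uepsx|_{\pO}=0$), this is consistent with (\ref{5.33}).
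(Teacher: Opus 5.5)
Your proposal is correct and follows essentially the paper's route: the same choice of $\ugs,\ogs,\Gas$ and $\epss$ via (\ref{gaepsc}), absorption of $\al\del\io\vepsx^2$ and of the three $\del\eps$-terms through smallness of $\del$, control of all $\gaeps'$/$\gaeps''$-terms by $\eta$ times the dissipation (using the artificial $\eps$-dissipation for the highest-order ones), and finally $\kappa\yeps\le\kappa k_2(\cdots)$ from Lemma \ref{lem5}. The deviations are minor. You absorb $\tau\del\eps\io\vepsxx^2$ and the $\vepsxx$-part of $2\al\del\eps\io\uepsxx\vepsxx$ into the term $\al B\eps\io\vepsxx^2$ on the left of (\ref{4.1}); the paper discards that term and instead uses $(bB-1)\ugs\io\vepsxx^2$ together with an extra restriction on $\del$. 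Your closing remark is superfluous, since only one Poincar\'e step (for $\uepsx$, which vanishes on $\pO$) is needed there.
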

\proof
  We let
  \bas
	\ugs\equiv \ugs(\Ths,\gamma):=\frac{1}{2} \inf_{\xi\in [0,2\Ths]} \gamma(\xi)
	\qquad \mbox{and} \qquad
	\ogs\equiv \ogs(\Ths,\gamma):=2\sup_{\xi\in [0,2\Ths]} \gamma(\xi),
  \eas
  and according to (\ref{gaepsc}) we can then fix $\epss=\epss\big(\Ths,(\gaeps)_{\eps\in (0,1)} \big) \in (0,1)$ in such a way that
  \be{6.44}
	\ugs \le \gaeps(\xi) \le \ogs
	\qquad \mbox{for all $\xi\in [0,2\Ths]$ and } \eps\in (0,\epss),
  \ee
  and that
  \be{6.5}
	|\gaeps'(\xi)| \le \Gas \equiv \Gas(\Ths,\gamma):= 1 + \sup_{\sig\in [0,2\Ths]} |\gaeps'(\sig)| 
	\quad \mbox{and} \quad
	|\gaeps''(\xi)| \le c_1\equiv c_1(\Ths,\gamma):=1 + \sup_{\sig\in [0,2\Ths]} |\gaeps''(\sig)| 
  \ee
  for all $\xi \in [0,2\Ths]$ and $\eps\in (0,\epss)$.
  Moreover noting that according to (\ref{5.1}), both
  \be{6.6}
	c_2\equiv c_2(\tau,\al,b,B):=\al-\tau B
	\qquad \mbox{and} \qquad
	c_3\equiv c_3(\tau,\al,b,B,\Ths,\gamma):=(bB-1)\ugs
  \ee
  are positive, we next let 
  $\del_1=\del_1(\tau,\al,b,B,\ugs(\Ths,\gamma),\ogs(\Ths,\gamma))$,
  $k_1=k_1(\tau,\al,b,B,\ugs(\Ths,\gamma),\ogs(\Ths,\gamma))$ and
  $k_2=k_2(\tau,\al,b,B,\ugs(\Ths,\gamma),\ogs(\Ths,\gamma))$ be as correspondingly obtained in Lemma \ref{lem5}, 
  and taking $c_4>0$ such that in line with
  a Poincar\'e inequality we have
  \be{6.7}
	\io \vp^2 \le c_4 \io \vp_x^2
	\qquad \mbox{for all } \vp\in W^{1,2}(\Om),
  \ee
  we define
  \be{6.8}
	\del\equiv \del(\tau,\al,b,B,\Ths,\gamma):=
	\min \bigg\{ \del_1 \, , \, \frac{c_3}{4c_4 \al} \, , \, \frac{c_3}{8(\frac{4\al^2}{\ugs}+\tau)} \, , \,
		\frac{\ugs}{(1+\tau)^2} \bigg\}
  \ee
  and
  \be{6.9}
	\kappa\equiv \kappa(\tau,\al,b,B,\Ths,\gamma):=
	\min \bigg\{ \frac{c_2}{2k_2} \, , \, \frac{c_3}{2k_2} \, , \, \frac{\del\ugs}{4k_2} \, , \, \frac{(B+b\del)\ugs}{2k_2}
		\bigg\}.
  \ee
  We finally abbreviate
  \be{6.10}
	c_5\equiv c_5(\tau,\al,b,B,\Ths,\gamma):=
	2\cdot\Big(\frac{b\Gas}{4} + \frac{\Gas}{4}\Big)  + \frac{bc_1}{4} + \frac{c_1}{4}
  \ee
  and
  \be{6.11}
	c_6\equiv c_6(\tau,\al,b,B,\Ths,\gamma):=
	\frac{b\Gas}{2} + \frac{\Gas}{4} + b\Gas + b\Gas c_4 + bc_1 c_4 + bB\Gas c_4 + \frac{bB \Gas}{4} + \frac{B\Gas}{4} + b\Gas c_4 \del
		+ \frac{\Gas}{4}
  \ee
  and
  \be{6.12}
	c_7\equiv c_7(\tau,\al,b,B,\Ths,\gamma):=
	2\Gas + \frac{(B+b\del)\Gas}{2} + \Gas c_4 + c_1 c_4 + b\Gas c_4 + \frac{b\Gas \del}{4} + \Gas c_4 \del + \frac{\Gas\del}{4}
		+ (B+b\del)\Gas
  \ee
  as well as
  \be{6.122}
	c_8\equiv c_8(\tau,\al,b,B,\Ths,\gamma):=
	2\Gas + \frac{B+b\del)\Gas}{4},
  \ee
  and fix some suitably small $\eta=\eta(\tau,\al,b,B,\Ths,\gamma) \in (0,1]$ satisfying
  \be{6.13}
	c_5 \eta \le \frac{c_2}{2},
	\quad
	\Big(c_6+\frac{b\Gas^2}{\ugs}\big) \eta \le \frac{c_3}{8},
	\quad
	\Big(c_7+\frac{\Gas^2}{b\ugs}\Big)\eta \le \frac{\del\ugs}{4}
	\quad \mbox{and} \quad
	\Big(c_8+\frac{\Gas^2}{2\ugs}\Big) \eta \le \frac{(B+b\del)\ugs}{2}.
  \ee
  Henceforth assuming that (\ref{te}) holds with some $\eps\in (0,\epss)$ and $T\in (0,\tme)$, we first draw on (\ref{6.44})
  and (\ref{6.5}) to see that indeed (\ref{5.2}) and (\ref{6.33}) hold, and that furthermore
  \be{6.14}
	|\gaeps''(\Teps)| \le c_1
	\qquad \mbox{in } \Om\times (0,T).
  \ee
  By positivity of the constants in (\ref{6.6}), from (\ref{4.1}), (\ref{5.3}) and (\ref{6.44}) we thus particularly obtain that
  \bea{6.15}
	& & \hs{-20mm}
	\yeps'(t)
	+ \kappa\yeps(t)
	+ c_2 \io \wepsx^2
	+ c_3 \io \vepsxx^2
	+ \del\ugs \io \uepsxx^2 \nn\\
	& & +\  \eps \io \wepsxx^2
	+ b\ugs \eps \io \vepsxxx^2
	+ (B+b\del) \ugs \eps \io \uepsxxx^2
	+ 2\ugs \eps^2 \io \uepsxxxx^2 \nn\\
	&\le& \kappa\cdot \bigg\{
	k_2 \io \wepsx^2
	+ k_2 \io \vepsxx^2
	+ k_2 \io \uepsxx^2
	+ k_2\eps \io \uepsxxx^2 \bigg\} \nn\\
	& & +\  \al\del \io \vepsx^2
	- 2\al\del\eps \io \uepsxx \vepsxx
	+ \tau\del\eps \io \vepsxx^2 \nn\\
	& & - (1+\tau)\del\eps \io \uepsxx\wepsxx + I_\eps(t) + J_\eps(t)
	\qquad \mbox{for all } t\in (0,T),
  \eea
  where
  \bea{6.16}
	I_\eps(t)
	&:=& \frac{b}{2} \io \gaeps'(\Teps) \Tepst \vepsxx^2
	+ \io \gaeps'(\Teps) \Tepst \uepsxx \vepsxx
	+ \frac{B+b\del}{2} \io \gaeps'(\Teps) \Tepst \uepsxx^2 \nn\\
	& & + b\io \gaeps'(\Teps) \Tepsx \vepsxx \wepsx
	+ \io \gaeps'(\Teps) \Tepsx \uepsxx\wepsx \nn\\
	& & + b\io \gaeps'(\Teps)\Tepsxx \vepsx \wepsx
	+ \io \gaeps'(\Teps) \Tepsxx \uepsx \wepsx \nn\\
	& & + b\io \gaeps''(\Teps) \Tepsx^2 \vepsx\wepsx
	+ \io \gaeps''(\Teps)\Tepsx^2 \uepsx\wepsx \nn\\
	& & - bB \io \gaeps'(\Teps) \Tepsx \vepsx \vepsxx
	- B \io \gaeps'(\Teps)\Tepsx \uepsx\vepsxx \nn\\
	& & - b\del \io \gaeps'(\Teps)\Tepsx \vepsx \uepsxx
	- \del \io \gaeps'(\Teps)\Tepsx\uepsx\uepsxx \nn\\
	& & + \eps \io \gaeps'(\Teps) \Tepst \uepsxxx^2
	- \eps \io \gaeps'(\Teps) \Tepsx \uepsxxx \vepsxx \nn\\
	& & -(B+b\del)\eps \io \gaeps'(\Teps) \Tepsx \uepsxx\uepsxxx,
	\qquad t\in (0,T),
  \eea
  and
  \bea{6.17}
	J_\eps(t)
	&:=&
	- b\eps \io \gaeps'(\Teps) \Tepsx \vepsxx\vepsxxx
	- \eps \io \gaeps'(\Teps)\Tepsx \uepsxx \vepsxxx \nn\\
	& & - 2\eps^2 \io \gaeps'(\Teps) \Tepsx \uepsxxx\uepsxxxx,
	\qquad t\in (0,T).
  \eea
  Here, (\ref{6.9}) asserts that
  \bas
	& & \hs{-20mm}
	 \kappa\cdot \bigg\{
	k_2 \io \wepsx^2
	+ k_2 \io \vepsxx^2
	+ k_2 \io \uepsxx^2
	+ k_2\eps \io \uepsxxx^2 \bigg\} \\
	&\le& \frac{c_2}{2} \io \wepsx^2
	+ \frac{c_3}{2} \io \vepsxx^2
	+ \frac{\del\ugs}{4} \io \uepsxx^2
	+ \frac{(B+b\del)\ugs\eps}{2} \io \uepsxxx^2
	\qquad \mbox{for all } t\in (0,T),
  \eas
  while (\ref{6.7}) along with Young's inequality and 
  the second, third and fourth smallness conditions contained in (\ref{6.8}) ensure that
  \bas
	\al\del\io \vepsx^2
	\le \al\del c_4 \io \vepsxx^2
	\le \frac{c_3}{4} \io \vepsxx^2
	\qquad \mbox{for all } t\in (0,T),
  \eas
  that
  \bas
	-2\al\del\eps \io \uepsxx\vepsxx
	+ \tau\del\eps \io \vepsxx^2
	&\le& \frac{\del\ugs}{4} \io \uepsxx^2
	+ \frac{4\al^2\del\eps^2}{\ugs} \io \vepsxx^2
	+ \tau\del\eps \io \vepsxx^2 \nn\\
	&\le& \frac{\del\ugs}{4} \io \uepsxx^2
	+ \Big(\frac{4\al^2}{\ugs} + \tau\big)\del \io \vepsxx^2 \nn\\
	&\le& \frac{\del\ugs}{4} \io \uepsxx^2
	+ \frac{c_3}{8} \io \vepsxx^2
	\qquad \mbox{for all } t\in (0,T),
  \eas
  and that
  \bas
	-(1+\tau)\del\eps \io \uepsxx\wepsxx
	&\le& \eps \io \wepsxx^2
	+ \frac{(1+\tau)^2 \del^2 \eps}{4} \io \uepsxx^2 \nn\\
	&\le& \eps \io \wepsxx^2
	+ \frac{(1+\tau)^2}{4} \del^2 \io \uepsxx^2 \nn\\
	&\le& \eps \io \wepsxx^2
	+ \frac{\del\ugs}{4} \io \uepsxx^2 
	\qquad \mbox{for all } t\in (0,T).
  \eas
  Therefore, (\ref{6.15}) implies that
  \bea{6.18}
	& & \hs{-20mm}
	\yeps'(t) + \kappa\yeps(t) + \frac{c_2}{2} \io \wepsx^2 + \frac{c_3}{8} \io \vepsxx^2 + \frac{\del\ugs}{4} \io \uepsxx^2 \nn\\
	& & +\ b\ugs \eps \io \vepsxxx^2
	+ \frac{(B+b\del)\ugs\eps}{2} \io \uepsxxx^2
	+ 2\ugs\eps^2 \io \uepsxxxx^2 \nn\\
	&\le& I_\eps(t) + J_\eps(t)
	\qquad \mbox{for all } t\in (0,T),
  \eea
  and to control the summands making up
  $I_\eps(t)$ here, we now repeatedly rely on (\ref{5.2}), (\ref{6.33}), (\ref{6.14}),
  (\ref{te}) and Young's inequality in verifying that
  \bas
	\frac{b}{2} \io \gaeps'(\Teps) \Tepst \vepsxx^2
	\le \frac{b\Gas}{2} \eta \io \vepsxx^2
  \eas
  and
  \bas
	\io \gaeps'(\Teps) \Tepst \uepsxx \vepsxx
	\le \Gas \eta \io |\uepsxx| \cdot |\vepsxx|
	\le \Gas \eta \io \uepsxx^2
	+ \frac{\Gas}{4} \eta \io \vepsxx^2
  \eas
  and
  \bas
	\frac{B+b\del}{2} \io \gaeps'(\Teps) \Tepst \uepsxx^2
	\le \frac{(B+b\del) \Gas}{2} \eta \io \uepsxx^2
  \eas
  and
  \bas
	b \io \gaeps'(\Teps) \Tepsx \vepsxx \wepsx
	\le b\Gas \eta \io |\vepsxx| \cdot |\wepsx|
	\le b\Gas \eta \io \vepsxx^2
	+ \frac{b\Gas}{4} \eta \io \wepsx^2
  \eas
  and
  \bas
	\io \gaeps'(\Teps) \Tepsx \uepsxx \wepsx
	\le \Gas \eta \io |\uepsxx| \cdot |\wepsx|
	\le \Gas\eta \io \uepsxx^2
	+ \frac{\Gas}{4} \eta \io \wepsx^2
  \eas
  for all $t\in (0,T)$, that thanks to (\ref{6.7}) we have
  \bas
	b \io \gaeps'(\Teps) \Tepsxx \vepsx \wepsx
	&\le& b\Gas\eta \io \vepsx^2
	+ \frac{b\Gas}{4} \eta \io \wepsx^2 \nn\\
	&\le& b\Gas c_4 \eta \io \vepsxx^2
	+ \frac{b\Gas}{4} \eta \io \wepsx^2 
  \eas
  and
  \bas
	\io \gaeps'(\Teps) \Tepsxx \uepsx \wepsx
	&\le& \Gas \eta \io \uepsx^2
	+ \frac{\Gas}{4} \eta \io \wepsx^2 \nn\\
	&\le& \Gas c_4 \eta \io \uepsxx^2
	+ \frac{\Gas}{4} \eta \io \wepsx^2
  \eas
  and, since $\eta\le 1$,
  \bas
	b \io \gaeps''(\Teps) \Tepsx^2 \vepsx \wepsx
	&\le& bc_1 \eta^2 \io \vepsx^2
	+ \frac{bc_1}{4} \eta^2 \io \wepsx^2 \nn\\
	&\le& bc_1 c_4 \eta \io \vepsxx^2
	+ \frac{bc_1}{4} \eta \io \wepsx^2 
  \eas
  as well as
  \bas
	\io \gaeps''(\Teps) \Tepsx^2 \uepsx\wepsx
	&\le& c_1\eta^2 \io \uepsx^2
	+ \frac{c_1}{4} \eta^2 \io \wepsx^2 \nn\\
	&\le& c_1 c_4 \eta \io \uepsxx^2
	+ \frac{c_1}{4} \eta \io \wepsx^2
  \eas
  for all $t\in (0,T)$.
  We similarly see that
  \bas
	- bB \io \gaeps'(\Teps) \Tepsx \vepsx \vepsxx
	\le bB \Gas c_4 \eta \io \vepsxx^2
	+ \frac{bB \Gas}{4} \eta \io \vepsxx^2
  \eas
  and
  \bas
	-B \io \gaeps'(\Teps) \Tepsx \uepsx \vepsxx
	\le B \Gas c_4 \eta \io \uepsxx^2
	+ \frac{B \Gas}{4} \eta \io \vepsxx^2
  \eas
  and
  \bas
	-b\del \io \gaeps'(\Teps) \Tepsx \vepsx \uepsxx
	\le b\Gas c_4 \del \eta \io \vepsxx^2
	+ \frac{b\Gas\del}{4} \eta \io \uepsxx^2
  \eas
  as well as
  \bas
	- \del \io \gaeps'(\Teps) \Tepsx \uepsx \uepsxx
	\le \Gas c_4 \del\eta \io \uepsxx^2
	+ \frac{\Gas\del}{4} \eta \io \uepsxx^2
  \eas
  for all $t\in (0,T)$, and that since $\eps\le 1$,
  \bas
	- \eps \io \gaeps'(\Teps) \Tepsx \uepsxxx\vepsxx
	\le \Gas \eta \eps \io \uepsxxx^2
	+ \frac{\Gas}{4} \eta \io \vepsxx^2
  \eas
  and
  \bas
	-(B+b\del) \eps \io \gaeps'(\Teps) \Tepsx \uepsxx \uepsxxx
	\le (B+b\del) \Gas \eta \io \uepsxx^2
	+ \frac{(B+b\del) \Gas}{4} \eta \eps \io \uepsxxx^2
  \eas
  as well as
  \bas
	 \eps \io \gaeps'(\Teps) \Tepst \uepsxxx^2
	\le \Gas \eta\eps \io \uepsxxx^2
  \eas
  for all $t\in (0,T)$.
  In view of (\ref{6.16}) and our definitions in (\ref{6.10})-(\ref{6.122}), the collection of these estimates reveals that
  \be{6.19}
	I_\eps(t)
	\le c_5 \eta \io \wepsx^2
	+ c_6 \eta \io \vepsxx^2
	+ c_7 \eta \io \uepsxx^2
	+ c_8 \eta \eps \io \uepsxxx^2
	\qquad \mbox{for all } t\in (0,T),
  \ee
  and in order to finally estimate the last summand in (\ref{6.18}), we make use of the artificial dissipation expressed in the last
  and third to last contributions to the left-hand side therein:
  Indeed, a combination of (\ref{6.33}), (\ref{6.14}) and (\ref{te}) 
  with Young's inequality shows that again since $\eta\le 1$ and $\eps\le 1$,
  \bas
	- b\eps \io \gaeps'(\Teps) \Tepsx \vepsxx \vepsxxx
	&\le& b\Gas \eta\eps \io |\vepsxx| \cdot |\vepsxxx| \nn\\
	&\le& \frac{b\ugs \eps}{4} \io \vepsxxx^2
	+ \frac{b\Gas^2 \eps}{\ugs} \eta^2 \io \vepsxx^2 \nn\\
	&\le& \frac{b\ugs \eps}{4} \io \vepsxxx^2
	+ \frac{b\Gas^2}{\ugs} \eta \io \vepsxx^2
  \eas
  and
  \bas
	- \eps \io \gaeps'(\Teps) \Tepsx \uepsxx \vepsxxx
	&\le& \Gas\eta\eps \io |\uepsxx| \cdot |\vepsxxx| \nn\\
	&\le& \frac{b\ugs\eps}{4} \io \vepsxxx^2
	+ \frac{\Gas^2\eps}{b\ugs} \eta^2 \io \uepsxx^2 \nn\\
	&\le& \frac{b\ugs\eps}{4} \io \vepsxxx^2
	+ \frac{\Gas^2}{b\ugs} \eta \io \uepsxx^2
  \eas
  as well as
  \bas
	-2\eps^2 \io \gaeps'(\Teps) \Tepsx \uepsxxx \uepsxxxx
	&\le& 2\Gas \eta\eps^2 \io |\uepsxxx| \cdot |\uepsxxxx| \nn\\
	&\le& 2\ugs\eps^2 \io \uepsxxxx^2
	+ \frac{\Gas^2 \eta^2\eps^2}{2\ugs} \io \uepsxxx^2 \nn\\
	&\le& 2\ugs\eps^2 \io \uepsxxxx^2
	+ \frac{\Gas^2}{2\ugs} \eta\eps \io \uepsxxx^2 
  \eas
  for all $t\in (0,T)$.
  Consequently,
  \bas
	J_\eps(t)
	\le \frac{b\ugs}{2} \eps \io \vepsxxx^2
	+ 2\ugs\eps^2 \io \uepsxxxx^2
	+ \frac{b\Gas^2}{\ugs} \eta \io \vepsxx^2
	+ \frac{\Gas^2}{b\ugs} \eta \io \uepsxx^2
	+ \frac{\Gas^2}{2\ugs} \eta\eps \io \uepsxxx^2
  \eas
  for all $t\in (0,T)$, so that (\ref{6.18}) together with (\ref{6.19}) shows that thanks to the smallness
  properties of $\eta$ listed in (\ref{6.13}), indeed the inequality in (\ref{6.3}) holds.
\qed
As an intermediate conclusion, from Lemma \ref{lem6} we immediately obtain some natural consequences upon simple integration
in time:
\begin{cor}\label{cor7}
  Let $\tau>0$, $\al>0$, $b>0$ and $B>0$ be such that (\ref{5.1}) holds,
  assume (\ref{gamma}),
  let $\Ths>0$, and let 
  $\eta=\eta(\tau,\al,b,B,\Ths,\gamma)$ and $\kappa=\kappa(\tau,\al,b,B,\Ths,\gamma)$ be as in Lemma \ref{lem6}.
  Then there exists $k_3=k_3(\tau,\al,b,B,\Ths,\gamma)>0$ with the property that if 
  $D>0$, if (\ref{gaeps0}), (\ref{gaepsc}) and (\ref{ie0}) hold, and if 
  $\eps\in (0,\epss)$ and $T\in (0,\tme)$ are such that (\ref{te}) is satisfied, it follows that
  \be{7.1}
	\io \wepsx^2(\cdot,t) \le k_3 \Aeps e^{-\kappa t}
	\qquad \mbox{for all } t\in (0,T)
  \ee
  and
  \be{7.2}
	\io \vepsxx^2(\cdot,t) \le k_3 \Aeps e^{-\kappa t}
	\qquad \mbox{for all } t\in (0,T)
  \ee
  and
  \be{7.3}
	\io \uepsxx^2(\cdot,t) \le k_3 \Aeps e^{-\kappa t}
	\qquad \mbox{for all } t\in (0,T),
  \ee
  and that
  \be{7.4}
	\eps \io \uepsxxx^2(\cdot,t) \le k_3 \Aeps e^{-\kappa t}
	\qquad \mbox{for all } t\in (0,T)
  \ee
  as well as
  \be{7.44}
	\eps \int_0^t e^{-\kappa(t-s)} \cdot \bigg\{ \io \vepsxxx^2(\cdot,s) \bigg\} ds \le k_3 \Aeps e^{-\kappa t}
	\qquad \mbox{for all } t\in (0,T),
  \ee
  where
  \be{7.5}
	\Aeps:=\io w_{0\eps x}^2
	+ \io v_{0\eps xx}^2 
	+ \io u_{0\eps xx}^2
	+ \eps \io u_{0\eps xxx}^2,
  \ee
  and where $\epss=\epss\big(\Ths,(\gaeps)_{\eps\in (0,1)} \big) \in (0,1)$ is taken from Lemma \ref{lem6}.
\end{cor}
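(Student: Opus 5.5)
Since (\ref{te}) holds on $(0,T)$ with $\eps\in(0,\epss)$, Lemma \ref{lem6} applies. It yields the differential inequality (\ref{6.3}) for $\yeps=\yeps^{(B,\del)}$ with the $\del,\eta,\kappa$ fixed there, and it also guarantees (\ref{5.2}) with $\ugs=\ugs(\Ths,\gamma)$ and $\ogs=\ogs(\Ths,\gamma)$. Because $\del\le\del_1$, Lemma \ref{lem5} then provides the two-sided bound (\ref{5.3}) on $(0,T)$, with constants $k_1,k_2$ depending only on $(\tau,\al,b,B,\Ths,\gamma)$. The plan is to integrate (\ref{6.3}) and convert the resulting bound on $\yeps$ into the claimed estimates via (\ref{5.3}). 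Set
\[
k_3:=\max\Big\{\frac{k_2}{k_1},\ \frac{2k_2}{b\ugs}\Big\}.
\]

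Multiplying (\ref{6.3}) by $e^{\kappa s}$ shows that $\frac{d}{ds}\big(e^{\kappa s}\yeps(s)\big)+\frac{b\ugs}{2}\eps e^{\kappa s}\io\vepsxxx^2\le 0$ on $(0,T)$. We integrate over $(0,t)$ and multiply by $e^{-\kappa t}$, which gives
\[
\yeps(t)+\frac{b\ugs}{2}\,\eps\int_0^t e^{-\kappa(t-s)}\io\vepsxxx^2(\cdot,s)\,ds\le \yeps(0)\,e^{-\kappa t}\qquad\mbox{for all } t\in(0,T).
\]
This step needs some care because $\yeps$ is only known to be continuous at $t=0$. The regularity (\ref{reg}) ensures that $\yeps\in C^0([0,T))\cap C^1((0,T))$, so we may integrate first over $(t_0,t)$ and then let $t_0\searrow0$. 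Alternatively, we may keep $t_0$ in place and let $t_0\searrow 0$ only at the very end.

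Next we bound $\yeps(0)$ by $k_2\Aeps$. Here we need the upper half of (\ref{5.3}) at $t=0$. The proof of Lemma \ref{lem5} uses only pointwise bounds on $\gaeps(\Teps)$, and at $t=0$ we have $\Teps(\cdot,0)=\Theta_{0\eps}$, which satisfies $\Teps\le 2\Ths$ by continuity as the limit of the condition in (\ref{te}). Hence $\ugs\le\gaeps(\Theta_{0\eps})\le\ogs$ by (\ref{6.44}), and the same Young and Poincar\'e computations give $\yeps(0)\le k_2\Aeps$ with $\Aeps$ as in (\ref{7.5}). A simpler route is to pass to the limit $t\searrow0$ in the upper bound of (\ref{5.3}), using continuity of all quantities involved.

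The conclusions then follow directly. The lower bound in (\ref{5.3}) gives $k_1\big(\io\wepsx^2+\io\vepsxx^2+\io\uepsxx^2+\eps\io\uepsxxx^2\big)\le\yeps(t)$. Since the dissipative integral is nonnegative, this yields (\ref{7.1})--(\ref{7.4}) with constant $k_2/k_1$. Because $\yeps(t)\ge0$ by (\ref{5.3}), dropping $\yeps(t)$ gives (\ref{7.44}) with constant $2k_2/(b\ugs)$. The main obstacle is the bookkeeping at $t=0$, namely justifying the integration down to $0$ and the initial upper bound. Both are handled by the continuity provided in (\ref{reg}).
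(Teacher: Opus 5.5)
Your proposal is correct and follows the paper's proof: multiply (\ref{6.3}) by $e^{\kappa s}$ and integrate, bound $\yeps(0)$ from above and $\yeps(t)$ from below via (\ref{5.3}), and take $k_3=\max\{\frac{k_2}{k_1},\frac{2k_2}{b\ugs}\}$, which is the paper's constant. Your extra care at $t=0$ (using $\Theta_{0\eps}\le 2\Ths$ by continuity so that (\ref{5.3}) also holds there) fills in a point the paper leaves implicit; strictly, continuity of $\eps\io\gaeps(\Teps)\uepsxxx^2$ at $t=0$ needs slightly more than the $C^{2,1}$ regularity stated in (\ref{reg}), but it follows from standard parabolic regularity for the smooth, compatible initial data.
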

\proof
  With $k_1=k_1(\tau,\al,b,B,\ugs(\Ths),\ogs(\Ths),\gamma)$ and $k_2=k_2(\tau,\al,b,B,\ugs(\Ths),\ogs(\Ths),\gamma)$ 
  as found in Lemma \ref{lem5}, an integration of (\ref{6.3}) using (\ref{5.3}) shows that
  if we let $\del=\del(\tau,\al,b,B,\Ths,\gamma)$ and $\kappa=\kappa(\tau,\al,b,B,\Ths,\gamma)$ be as in Lemma \ref{lem6}, then
  \bas
	& & \hs{-30mm}
	k_1 \io \wepsx^2
	+ k_1 \io \vepsxx^2
	+ k_1 \io \uepsxx^2
	+ k_1 \eps\io \uepsxxx^2 \\
	&\le& \yeps(t) \\
	&\le& \yeps(0) e^{-\kappa t} 
	- \frac{b\ugs}{2} \eps \int_0^t e^{-\kappa(t-s)} \cdot \bigg\{ \io \vepsxxx^2(\cdot,s) \bigg\} ds \\
	&\le& \bigg\{ k_2 \io w_{0\eps x}^2 + k_2 \io v_{0\eps xx}^2 + k_2 \io u_{0\eps xx}^2 + k_2 \eps \io u_{0\eps xxx}^2 \bigg\}
		\cdot e^{-\kappa t} \\
	& & 
	- \frac{b\ugs}{2} \eps \int_0^t e^{-\kappa(t-s)} \cdot \bigg\{ \io \vepsxxx^2(\cdot,s) \bigg\} ds
	\qquad \mbox{for all } t\in (0,T).
  \eas
  In line with (\ref{7.5}), this yields (\ref{7.1})-(\ref{7.4}) if we let 
  $k_3\equiv k_3(\tau,\al,b,B,\Ths,\gamma):=\max\{\frac{k_2}{k_1} \, , \, \frac{2k_2}{b\ugs}\}$.
\qed
\mysection{Controlling deviations from homogeneity in the temperature distribution}
Guided by the overall ambition to show that (\ref{te}) may in fact imply itself under appropriate assumptions,
we next attempt to quantify how far conclusions on smallness of $\weps,\veps$ and $\ueps$,
in the flavor of those derived under the hypothesis (\ref{te}) in the previous section, 
can in turn be used to make sure that $\Teps$ can indeed deviate from spatial homogeneity only to a moderate extent.\abs
As a first step toward this, let us examine how far the above implications entail smallness of certain source terms
arising in the heat subsystem of (\ref{0eps}) as well as a differentiated version thereof.
\begin{lem}\label{lem8}
  Assume that $\tau>0$, $\al>0$, $b>0$ and $B>0$ satisfy \eqref{5.1}, that \eqref{gamma} holds and
  $\Ths>0$, and that $\ugs=\ugs(\Ths,\gamma)$, $\ogs=\ogs(\Ths\gamma)$, 
  $\epss=\epss\big(\Ths,(\gaeps)_{\eps\in (0,1)} \big) \in (0,1)$,	
  $\eta=\eta(\tau,\al,b,B,\Ths\gamma)$ and $\kappa=\kappa(\tau,\al,b,B,\Ths\gamma)$ are as provided by Lemma \ref{lem6}.
  Then there exists $k_4=k_4(\tau,\al,b,B,\Ths\gamma)>0$ such that if $D>0$ and (\ref{gaeps0}), (\ref{gaepsc}) and (\ref{ie0}) hold,
  and if $\eps\in (0,\epss)$ and $T\in (0,\tme)$ are such that (\ref{te}) is valid, for
  \be{h}
	\heps(x,t):= b \gaeps(\Teps(x,t)) \vepsx^2(x,t),
	x\in\Om, \ t\in (0,T),
  \ee
  and $A_\eps$ as defined in Corollary \ref{cor7},
  we have
  \be{8.1}
	\|\hepsx(\cdot,t)\|_{L^2(\Om)} \le k_4 \Aeps e^{-\kappa t}
	\qquad \mbox{for all } t\in (0,T)
  \ee
  and
  \be{8.2}
	\|\heps(\cdot,t)\|_{L^\infty(\Om)} \le k_4 \Aeps e^{-\kappa t}
	\qquad \mbox{for all } t\in (0,T)
  \ee
  as well as
  \be{8.3}
	\int_0^t e^{\kappa s} \|\hepst(\cdot,s)\|_{L^2(\Om)}^2 ds \le k_4 \Aeps^2
	\qquad \mbox{for all } t\in (0,T).
  \ee

\end{lem}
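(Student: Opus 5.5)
The plan is to compute $\hepsx$ and $\hepst$ explicitly and bound each resulting term by combining the decay estimates of Corollary \ref{cor7} with the pointwise bounds (\ref{5.2}), (\ref{6.33}) and (\ref{te}). The key auxiliary tool is a one-dimensional Sobolev embedding for $\vepsx$. Since $\vepsx=0$ on $\pO$, we have $\vepsx(\cdot,t)\in W^{1,2}_0(\Om)$. Hence $\|\vepsx\|_{L^\infty(\Om)}^2\le c_1\io\vepsxx^2$ with some $c_1=c_1(\Om)>0$, which can be taken to be $|\Om|$ by writing $\vepsx(x)=\int_{a}^x\vepsxx$. Combined with (\ref{7.2}) and the Poincar\'e inequality (\ref{6.7}), this gives
\[
\|\vepsx(\cdot,t)\|_{L^\infty(\Om)}^2+\io\vepsx^2(\cdot,t)+\io\vepsxx^2(\cdot,t)\le c_2\Aeps e^{-\kappa t}
\qquad\mbox{for all } t\in(0,T).
\]
Bound (\ref{8.2}) then follows immediately from $\heps\le b\ogs\vepsx^2$.

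For (\ref{8.1}), I will write
\[
\hepsx=b\gaeps'(\Teps)\Tepsx\vepsx^2+2b\gaeps(\Teps)\vepsx\vepsxx.
\]
The first term is bounded in $L^2$ by $b\Gas\eta|\Om|^{1/2}\|\vepsx\|_{L^\infty}^2$. The second is bounded by $2b\ogs\|\vepsx\|_{L^\infty}\|\vepsxx\|_{L^2}$. Both are therefore at most a constant multiple of $\Aeps e^{-\kappa t}$. Note that the product of two square roots of $\Aeps e^{-\kappa t}$ produces exactly the linear rate claimed.

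For (\ref{8.3}), I will use $\vepsxt=\eps\vepsxxx+\wepsx$ to write
\[
\hepst=b\gaeps'(\Teps)\Tepst\vepsx^2+2b\gaeps(\Teps)\vepsx\wepsx+2b\eps\gaeps(\Teps)\vepsx\vepsxxx.
\]
The first two terms are handled as before, using (\ref{7.1}). Their squared $L^2$ norms are bounded by $c\Aeps^2e^{-2\kappa s}$, so after multiplication by $e^{\kappa s}$ their time integrals are at most $c\Aeps^2/\kappa$.

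The main obstacle is the artificial term involving $\vepsxxx$, for which no pointwise-in-time decay is available. Its squared $L^2$ norm is at most
\[
4b^2\ogs^2\eps^2\|\vepsx\|_{L^\infty}^2\io\vepsxxx^2\le c\,\Aeps e^{-\kappa s}\cdot\eps\io\vepsxxx^2,
\]
where I have used $\eps\le1$. After multiplying by $e^{\kappa s}$, the exponentials cancel, and it remains to bound $\Aeps\cdot\eps\int_0^t\io\vepsxxx^2\,ds$. Here I will use (\ref{7.44}) in the form $\eps\int_0^t e^{\kappa s}\io\vepsxxx^2\,ds\le k_3\Aeps$, together with $e^{\kappa s}\ge1$. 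This yields $\eps\int_0^t\io\vepsxxx^2\le k_3\Aeps$, so this contribution is bounded by $c\Aeps^2$ as well. Collecting all constants, which depend only on $\tau,\al,b,B,\Ths,\gamma$ and $\Om$, gives $k_4$.
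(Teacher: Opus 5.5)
Your proposal is correct and follows the paper's proof essentially step by step. You use the same embedding $\|\vepsx\|_{L^\infty(\Om)}\le c\|\vepsxx\|_{L^2(\Om)}$ for $\vepsx\in W_0^{1,2}(\Om)$ combined with (\ref{7.1}), (\ref{7.2}), (\ref{5.2}), (\ref{6.33}) and (\ref{te}), the same splitting of $\hepst$ via $\vepsxt=\eps\vepsxxx+\wepsx$, and the same treatment of the artificial term through $\eps^2\le\eps$ and (\ref{7.44}) in the form $\eps\int_0^t e^{\kappa s}\io\vepsxxx^2\,ds\le k_3\Aeps$. The only difference is cosmetic: you cancel $e^{\kappa s}e^{-\kappa s}$ and then reinsert $e^{\kappa s}\ge 1$, whereas the paper drops $e^{-\kappa s}\le 1$ directly, and these give the same estimate.
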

\proof
  We fix $c_1>0$ such that
  \be{8.5}
	\|\vp\|_{L^\infty(\Om)} \le c_1 \|\vp_x\|_{L^2(\Om)} 
	\qquad \mbox{for all } \vp \in W_0^{1,2}(\Om),
  \ee
  and then employ (\ref{5.2}) along with (\ref{7.2}) to see that with $\ogs=\ogs(\Ths,\gamma)$ 
  and $k_3=k_3(\tau,\alpha,b,B,\Ths,\gamma)$ taken from Lemma \ref{lem6} and Corollary \ref{cor7}, respectively,
  \bea{8.6}
	\|\heps\|_{L^\infty(\Om)}
	&=& b\|\gaeps(\Teps)\|_{L^\infty(\Om)} \|\vepsx\|_{L^\infty(\Om)}^2 |\Om| \nn\\
	&\le& b\ogs \cdot c_1^2 \|\vepsxx\|_{L^2(\Om)}^2 |\Om| \nn\\
	&\le& b \ogs c_1^2 |\Om| \cdot k_3^2 \Aeps e^{-\kappa t}
	\qquad \mbox{for all } t\in (0,T).
  \eea
  Likewise, additionally drawing on (\ref{6.33}) and (\ref{te}) as well as the inequality $\eta\le 1$ we find that
  \bea{8.7}
	\|h_{\eps x}\|_{L^2(\Om)}
	&=& \big\| b\gaeps'(\Teps) \Tepsx \vepsx^2 + 2b\gaeps(\Teps) \vepsx \vepsxx \big\|_{L^2(\Om)} \nn\\
	&\le& b\Gas c_1^2 \|\vepsxx\|_{L^2(\Om)}^2 |\Om|^\frac{1}{2} 
	+ 2b\ogs \cdot c_1\|\vepsxx\|_{L^2(\Om)}^2 \nn\\
	&\le& \big\{ b\Gas c_1^2 |\Om|^\frac{1}{2} + 2b\ogs c_1 \big\} \cdot k_3 \Aeps e^{-\kappa t}
	\qquad \mbox{for all } t\in (0,T).
  \eea
  Finally, in view of the identity $\vepsxt=\eps\vepsxxx+\wepsx$ we may combine (\ref{5.2}), (\ref{6.33}), (\ref{te}),
  (\ref{8.5}) and (\ref{7.2}) with (\ref{7.1}) to estimate
  \bea{8.8}
	& & \hs{-20mm}
	\int_0^t e^{\kappa s} \|h_{\eps t}(\cdot,s)\|_{L^2(\Om)}^2 ds \nn\\
	&=& \int_0^t e^{\kappa s} \cdot \Big\| b\gaeps'(\Teps) \Tepst \vepsx^2
	+ 2b\gaeps(\Teps) \vepsx\cdot\big\{ \eps\vepsxxx+\wepsx\big\} \big\|_{L^2(\Om)}^2 ds \nn\\
	&\le& 3 \int_0^t e^{\kappa s} \cdot \big\| b\gaeps'(\Teps) \Tepst \vepsx^2 \big\|_{L^2(\Om)}^2 ds
	+ 3 \eps \int_0^t e^{\kappa s} \cdot \big\| 2b \gaeps(\Teps)\vepsx \vepsxxx \big\|_{L^2(\Om)}^2 ds \nn\\
	& & 3 \int_0^t e^{\kappa s} \cdot \big\| 2b\gaeps(\Teps) \vepsx \wepsx\big\|_{L^2(\Om)}^2 ds \nn\\
	&\le& 3b^2 \Gas^2 c_1^4 |\Om| \int_0^t e^{\kappa s} \|\vepsxx\|_{L^2(\Om)}^4 ds
	+ 12 b^2 \ogs^2 c_1^2 \eps \int_0^t e^{\kappa s} \|\vepsxx\|^2_{L^2(\Om)}\|\vepsxxx\|_{L^2(\Om)}^2 ds \nn\\
	& & + 12 b^2 \ogs^2 c_1^2 \int_0^t e^{\kappa s} \|\vepsxx\|_{L^2(\Om)}^2 \|\wepsx\|_{L^2(\Om)}^2 ds \nn\\
	&\le& 3b^2 \Gas^2 c_1^4 |\Om| \int_0^t e^{\kappa s} \cdot \big\{ k_3 \Aeps e^{-\kappa s}\big\}^2 ds 
	+ 12 b^2 \ogs^2 c_1^2 \eps \int_0^t e^{\kappa s} \cdot \big\{ k_3\Aeps e^{-\kappa s}\big\} \cdot 
		\bigg\{ \io \vepsxxx^2 \bigg\} ds \nn\\
	& & + 12 b^2 \ogs^2 c_1^2 \int_0^t e^{\kappa s} \cdot \big\{ k_3 \Aeps e^{-\kappa s}\big\} \cdot
		\big\{ k_3 \Aeps e^{-\kappa s}\big\} ds
	\qquad \mbox{for all } t\in (0,T),
  \eea
  where
  \bea{8.9}
	\int_0^t e^{\kappa s} \cdot \big\{ k_3 \Aeps e^{-\kappa s}\big\} \cdot
		\big\{ k_3 \Aeps e^{-\kappa s}\big\} ds
	&=& \int_0^t e^{\kappa s} \big\{ k_3 \Aeps e^{-\kappa s}\big\}^2 ds \nn\\
	&\le& k_3^2 \Aeps^2 \int_0^t e^{-\kappa s} ds \nn\\
	&\le& \frac{k_3^2\Aeps^2}{\kappa}
	\qquad \mbox{for all } t\in (0,T),
  \eea
  and where due to (\ref{7.44}) and the rough estimate $e^{-\kappa s} \le 1$ for $s\ge 0$,
  \bea{8.10}
	\eps \int_0^t e^{\kappa s} \cdot \big\{ k_3\Aeps e^{-\kappa s}\big\} \cdot 
		\bigg\{ \io \vepsxxx^2 \bigg\} ds 
	&\le& k_3 \Aeps \cdot \eps \int_0^t e^{\kappa s} \cdot \bigg\{ \io \vepsxxx^2 \bigg\}  ds \nn\\
	&\le& k_3^2 \Aeps^2
	\qquad \mbox{for all } t\in (0,T).
  \eea
  With some suitably large $k_4=k_4(\tau,\al,b,B,\Ths,\gamma)>0$, from (\ref{8.8})-(\ref{8.10}) we infer (\ref{8.3}),
  while (\ref{8.6}) and (\ref{8.7}) imply (\ref{8.2}) and (\ref{8.1}), respectively.
\qed
This preparation enables us to conclude the following by combining a parabolic comparison argument 
with a reasoning based on heat semigroup estimates.
\begin{lem}\label{lem9}
  Let $\tau>0$, $\al>0$, $b>0$ and $B>0$ satisfy (\ref{5.1}), 
  let $\Ths>0$, and let $\ugs=\ugs(\Ths,\gamma)$, $\ogs=\ogs(\Ths\gamma)$,
  $\epss=\epss\big(\Ths,(\gaeps)_{\eps\in (0,1)} \big) \in (0,1)$,	
  $\eta=\eta(\tau,\al,b,B,\Ths,\gamma)$ and $\kappa=\kappa(\tau,\al,b,B,\Ths,\gamma)$ be as in Lemma \ref{lem6}.
  Then for each $D>0$ there exist $k_5=k_5(D,\tau,\al,b,B,\Ths,\gamma)>0$ and 
  $\kappa_0=\kappa_0(D,\tau,\al,b,B,\Ths,\gamma)\in (0,\kappa]$
  such that if (\ref{gaeps0}), (\ref{gaepsc}) and (\ref{ie0}) are valid, and if
  $\eps\in (0,\epss)$ and $T\in (0,\tme)$ are such that (\ref{te}) are satisfied, then
  \be{9.1}
	\Teps(x,t)  \le \|\Theta_{0\eps}\|_{L^\infty(\Om)} + k_5 \Aeps
	\qquad \mbox{for all $x\in\Om$ and } t\in (0,T)
  \ee
  and
  \be{9.2}
	\|\Tepst(\cdot,t)\|_{L^\infty(\Om)}
	\le \|\Theta_{0\eps t}\|_{L^\infty(\Om)} + k_5 \Aeps
	\qquad \mbox{for all } t\in (0,T)
  \ee
  and
  \be{9.22}
	\|\Tepsxx(\cdot,t)\|_{L^\infty(\Om)}
	\le k_5\|\Theta_{0\eps t}\|_{L^\infty(\Om)} + k_5 \Aeps
	\qquad \mbox{for all } t\in (0,T)
  \ee
  as well as
  \be{9.3}
	\|\Tepsx(\cdot,t)\|_{L^\infty(\Om)}
	\le k_5 \cdot \big\{ \|\Theta_{0\eps x}\|_{L^\infty(\Om)} + \Aeps\big\} \cdot e^{-\kappa_0 t}
	\qquad \mbox{for all } t\in (0,T),
  \ee
  where we have set
  \be{9.4}
	\Theta_{0\eps t} := D \Theta_{0\eps xx} + b\gaeps(\Theta_{0\eps}) v_{0\eps x}^2,
  \ee
  and where $\Aeps$ is as in (\ref{7.5}).
\end{lem}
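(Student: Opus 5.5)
For \eqref{9.1} I plan a parabolic comparison. By \eqref{8.2} in Lemma \ref{lem8}, the source $\heps$ in $\Tepst=D\Tepsxx+\heps$ satisfies $\heps\le k_4\Aeps e^{-\kappa t}$ on $\Om\times(0,T)$. The spatially homogeneous function
\[
\ov\Theta(t):=\|\Theta_{0\eps}\|_{L^\infty(\Om)}+\frac{k_4\Aeps}{\kappa}\big(1-e^{-\kappa t}\big)
\]
is therefore a supersolution with Neumann data zero. The comparison principle then yields \eqref{9.1} with $k_5\ge k_4/\kappa$.

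For \eqref{9.2} and \eqref{9.22} I would differentiate the heat equation in time. The function $z:=\Tepst$ solves $z_t=Dz_{xx}+\hepst$ with $z_x=0$ on $\pO$ and $z(\cdot,0)=\Theta_{0\eps t}$ from \eqref{9.4}. The variation-of-constants formula gives
\[
z(t)=e^{tD\Delta}\Theta_{0\eps t}+\int_0^t e^{(t-s)D\Delta}\hepst(s)\,ds .
\]
The first term is bounded by $\|\Theta_{0\eps t}\|_{L^\infty}$ by the maximum principle. For the integral I will use the one-dimensional smoothing estimate $\|e^{\sigma D\Delta}\vp\|_{L^\infty}\le c(1+\sigma^{-1/4})\|\vp\|_{L^2}$. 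Together with Cauchy--Schwarz against $e^{\kappa s}$, this bounds the integral by
\[
\Big(\int_0^t (1+(t-s)^{-1/4})^2e^{-\kappa s}\,ds\Big)^{1/2}\Big(\int_0^t e^{\kappa s}\|\hepst\|_{L^2}^2\,ds\Big)^{1/2}.
\]
The second factor is at most $\sqrt{k_4}\Aeps$ by \eqref{8.3}. The first factor is bounded uniformly in $t$, since $(t-s)^{-1/2}$ is integrable and $e^{-\kappa s}$ decays; I would check this by splitting the integral at $s=t-1$. This gives \eqref{9.2}. Then \eqref{9.22} follows from $D\Tepsxx=\Tepst-\heps$ together with \eqref{8.2}, with $k_5$ absorbing $1/D$.

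For \eqref{9.3} I would work with $\Tepsx$. It satisfies $(\Tepsx)_t=D(\Tepsx)_{xx}+\hepsx$ with homogeneous Dirichlet data, because $\Tepsx=0$ on $\pO$. Equivalently, I can apply the Neumann semigroup identity $\pa_x e^{tD\Delta}=e^{tD\Delta_{\mathcal D}}\pa_x$. The Dirichlet heat semigroup decays exponentially with rate $\lambda_1 D$, where $\lambda_1=\pi^2/|\Om|^2$. Hence $\|e^{tD\Delta_{\mathcal D}}\vp\|_{L^\infty}\le c\,e^{-\lambda_1 Dt}\|\vp\|_{L^\infty}$, and for $L^2$ data there is the extra factor $(1+t^{-1/4})$. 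Duhamel's formula combined with \eqref{8.1} then gives
\[
\|\Tepsx(t)\|_{L^\infty}\le c\,e^{-\lambda_1 Dt}\|\Theta_{0\eps x}\|_{L^\infty}+c\,k_4\Aeps\int_0^t(1+(t-s)^{-1/4})e^{-\lambda_1 D(t-s)}e^{-\kappa s}\,ds .
\]
For the choice of rate, I would set $\kappa_0:=\tfrac12\min\{\kappa,\lambda_1 D\}\le\kappa$. With this choice the convolution integral is bounded by $C e^{-\kappa_0 t}$, which yields \eqref{9.3}.

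The main obstacle I expect is the $\hepst$ contribution in \eqref{9.2}. Lemma \ref{lem8} controls $\hepst$ only in a weighted $L^2_tL^2_x$ sense, because of the $\eps\vepsxxx$ term. This is why the $L^2\to L^\infty$ smoothing rate $\sigma^{-1/4}$, square-integrable in one dimension, must be paired carefully with the weight $e^{\kappa s}$ to obtain a $t$-independent bound. All remaining steps are routine comparison and semigroup estimates.
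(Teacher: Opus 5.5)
Your proposal is correct and follows the paper's proof essentially step by step. The paper uses the same spatially homogeneous supersolution for (\ref{9.1}), and the Neumann Duhamel formula for $\Tepst$ with the $(1+t^{-1/4})$ $L^2\to L^\infty$ smoothing bound paired by Cauchy--Schwarz against the weight $e^{\kappa s}$ from (\ref{8.3}). It then uses $D\Tepsxx=\Tepst-\heps$ for (\ref{9.22}), and the Dirichlet problem for $\Tepsx$ with exponential decay of the Dirichlet semigroup for (\ref{9.3}). The only cosmetic difference is that you fix $\kappa_0=\frac12\min\{\kappa,\lambda_1 D\}$ explicitly, whereas the paper takes any $\kappa_0\in(0,\kappa]$ below its Dirichlet decay rate $\lambda$.
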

\proof
  We let $k_4=k_4(\tau,\al,b,B,\Ths,\gamma)$ be as in Lemma \ref{lem8}, and first introduce a comparison function by defining
  \bas
	\ov{\Theta}(x,t):=\|\Theta_{0\eps}\|_{L^\infty(\Om)} + \frac{k_4 \Aeps}{\kappa} \cdot (1-e^{-\kappa t}),
	\qquad x\in\bom, \ t\ge 0.
  \eas
  Then $\ov{\Theta}(x,0)=\|\Theta_{0\eps}\|_{L^\infty(\Om)} \ge \Theta_{0\eps}$ in $\Om$ and $\ov{\Theta}_x=0$
  on $\pO\times (0,\infty)$, so that since with $\heps$ taken from (\ref{h}) we have
  \bas
	\ov{\Theta}_t - D\ov{\Theta}_{xx} - \heps(x,t)
	= \ov{\Theta}_t - \heps(x,t) 
	= k_4 \Aeps e^{-\kappa t} - \heps(x,t)
	\ge 0
	\qquad \mbox{in } \Om\times (0,T)
  \eas
  by (\ref{8.2}), a comparison principle asserts that $\Teps\le\ov{\Theta}$ in $\Om\times (0,T)$ and thus, in particular,
  \be{9.44}
	\Teps(x,t) \le \|\Theta_{0\eps}\|_{L^\infty(\Om)} + \frac{k_4 \Aeps}{\kappa}
	\qquad \mbox{for all $x\in\Om$ and } t\in (0,T).
  \ee
  We next recall known smoothing properties of the Neumann and Dirichlet heat semigroups $(e^{t\DN})_{t\ge 0}$ and
  $(e^{t\DD})_{t\ge 0}$ (\cite{win_JDE2010}, \cite{quittner_souplet}) to fix $c_1=c_1(D)>0, c_2=c_2(D)>0$, $c_3=c_3(D)>0$ 
  and $\lam=\lam(D)>0$ such that for all $\vp\in C^0(\bom)$ we have
  \be{9.5}
	\|e^{tD\DN}\vp\|_{L^\infty(\Om)} \le c_1\cdot (1+t^{-\frac{1}{4}}) \|\vp\|_{L^2(\Om)}
	\qquad \mbox{for all } t>0
  \ee
  and
  \be{9.6}
	\|e^{tD\DD}\vp\|_{L^\infty(\Om)} \le c_2 e^{-\lam t} \|\vp\|_{L^\infty(\Om)}
	\qquad \mbox{for all } t>0
  \ee
  as well as
  \be{9.7}
	\|e^{tD\DN}\vp\|_{L^\infty(\Om)} \le c_3 t^{-\frac{1}{4}} e^{-\lam t} \|\vp\|_{L^2(\Om)}
	\qquad \mbox{for all } t>0.
  \ee
  On the basis of a Duhamel representation associated with the identity $\Theta_{\eps tt} = D\Theta_{\eps xxt} + \hepst$,
  using the maximum principle, (\ref{9.5}) and (\ref{8.3}) we then obtain that
  for all $t\in (0,T)$,
  \bas
	\|\Tepst(\cdot,t)\|_{L^\infty(\Om)}
	&=& \bigg\| e^{tD\DN} \Theta_{0\eps t} + \int_0^t e^{(t-s)D\DN} h_{\eps t}(\cdot,s) ds \bigg\|_{L^\infty(\Om)} \nn\\
	&\le& \|\Theta_{0\eps t}\|_{L^\infty(\Om)}
	+ c_1 \int_0^t \Big( 1+(t-s)^{-\frac{1}{4}} \Big) \|h_{\eps t}(\cdot,s)\|_{L^2(\Om)} ds \nn\\
	&\le& \|\Theta_{0\eps t}\|_{L^\infty(\Om)}
	+ c_1 \cdot \bigg\{ \int_0^t e^{\kappa s} \|h_{\eps t}(\cdot,s)\|_{L^2(\Om)}^2 ds \bigg\}^\frac{1}{2} \cdot
		\bigg\{ \int_0^t \Big(1+(t-s)^{-\frac{1}{4}}\Big)^2 ds \bigg\}^\frac{1}{2}.
  \eas
  Since
  \bas
	\int_0^t \Big(1+(t-s)^{-\frac{1}{4}}\Big)^2 e^{-\kappa s} ds
	&\le& 4 \int_0^{(t-1)_+} e^{-\kappa s} ds
	+ 4 \int_{t-1}^t (t-s)^{-\frac{1}{2}} ds \\
	&=& \frac{4}{\kappa} \cdot \big( 1-e^{-\kappa (t-1)_+} \big) + 8
	\qquad \mbox{for all } t>0,
  \eas
  in view of (\ref{8.3}) this implies that
  \be{9.8}
	\|\Tepst(\cdot,t)\|_{L^\infty(\Om)}
	\le \|\Theta_{0\eps}\|_{L^\infty(\Om)}
	+ c_1 k_4^\frac{1}{2} \cdot \Big(\frac{4}{\kappa}+8\Big) \cdot \Aeps
	\qquad \mbox{for all } t\in (0,T),
  \ee
  and thereby also entails that
  \bea{9.87}
	\|\Tepsxx(\cdot,t)\|_{L^\infty(\Om)}
	&=& \Big\| \frac{\Tepst(\cdot,t)}{D} - \frac{\heps(\cdot,t)}{D}\Big\|_{L^\infty(\Om)} \nn\\
	&\le& \frac{1}{D} \|\Theta_{0\eps t}\|_{L^\infty(\Om)}
	+ \frac{c_1 k_4^\frac{1}{2}}{D} \cdot \Big(\frac{4}{\kappa}+8\Big) \cdot \Aeps
	+ \frac{k_4 \Aeps}{D} \cdot e^{-\kappa t} \nn\\
	&\le& \frac{1}{D} \|\Theta_{0\eps t}\|_{L^\infty(\Om)}
	+ \Big\{ \frac{c_1 k_4^\frac{1}{2}}{D} \cdot \Big(\frac{4}{\kappa}+8\Big) + \frac{k_4}{D} \Big\} \cdot \Aeps
	\quad \mbox{for all } t\in (0,T).
  \eea
  Finally, using that $\Tepsx$ satisfies the homogeneous Dirichlet problem for $\Theta_{\eps xt}= D\Theta_{\eps xxx} + h_{\eps x}$
  in $\Om\times (0,\tme)$ with $\Tepsx|_{t=0}=\Theta_{0\eps x}$, in a similar manner we conclude from (\ref{9.6}), (\ref{9.7})
  and (\ref{8.1}) that
  \bea{9.9}
	\|\Tepsx(\cdot,t)\|_{L^\infty(\Om)}
	&=& \bigg\| e^{tD\DD} \Theta_{0\eps x} + \int_0^t e^{(t-s)D\DD} h_{\eps x}(\cdot,s) ds \bigg\|_{L^\infty(\Om)} \nn\\
	&\le& c_2 e^{-\lam t} \|\Theta_{0\eps x}\|_{L^\infty(\Om)}
	+ c_3 \int_0^t (t-s)^{-\frac{1}{4}} e^{-\lam (t-s)} \|h_{\eps x}(\cdot,s)\|_{L^2(\Om)} ds \nn\\
	&\le& c_2 e^{-\lam t} \|\Theta_{0\eps x}\|_{L^\infty(\Om)}
	+ c_3 k_4 \Aeps \int_0^t (t-s)^{-\frac{1}{4}} e^{-\lam (t-s)} e^{-\kappa s} ds
  \eea
  for all $t\in (0,T)$.
  Choosing any $\kappa_0=\kappa_0(D,\tau,\al,b,B,\Ths,\gamma)\in (0,\kappa]$ such that $\kappa_0<\lam$, we can here estimate
  \bas
	\int_0^t (t-s)^{-\frac{1}{4}} e^{-\lam (t-s)} e^{-\kappa s} ds
	&\le& \int_0^{(t-1)_+} e^{-\lam(t-s)} e^{-\kappa_0 s} ds
	+ e^{-\kappa_0(t-1)} \int_{t-1}^t (t-s)^{-\frac{1}{4}} ds \nn\\
	&=& \frac{e^{-\lam}}{\lam-\kappa_0} \cdot \big( e^{-\kappa_0(t-1)_+} - e^{-\lam (t-1)_+}\big)
	+ \frac{4}{3} e^{-\kappa_0 (t-1)} \\
	&\le& \Big( \frac{e^{\kappa_0-\lam}}{\lam-\kappa_0} + \frac{4 e^{\kappa_0}}{3}\Big) e^{-\kappa_0 t}
	\qquad \mbox{for all } t>0,
  \eas
  so that since $e^{-\lam t} \le e^{-\kappa_0 t}$ for all $t>0$, from (\ref{9.9}) we infer that
  \be{9.10}
	\|\Tepsx(\cdot,t)\|_{L^\infty(\Om)}
	\le \Big\{ c_2 \|\Theta_{0\eps x}\|_{L^\infty(\Om)}
	+ c_3 k_4 \cdot \Big( \frac{e^{\kappa_0-\lam}}{\lam-\kappa_0} + \frac{4 e^{\kappa_0}}{3}\Big) \cdot \Aeps \Big\}
		\cdot e^{-\kappa_0 t}
	\qquad \mbox{for all } t\in (0,T).
  \ee 
  With some adequately large $k_5=k_5(D,\tau,\al,b,B,\Ths,\gamma)>0$, 
  collecting (\ref{9.44}), (\ref{9.8}), (\ref{9.87}) and (\ref{9.10}) we obtain (\ref{9.1})-(\ref{9.3}).
\qed
\mysection{Closing the loop. Global relaxation in the approximate problems}
We are now in a position to perform a self-map type reasoning hence closing our loop of arguments:
\begin{lem}\label{lem10}
  Let 
  $\tau>0$, $\al>0$ and $b>0$ be such that (\ref{12.01}) holds, let $D>0$ and $\Ths>0$, and assume $\gamma$ to comply with 
  (\ref{gamma}).
  Then there exist $\nu=\nu(D,\tau,\al,b,\Ths,\gamma)>0$,
  $\ks=\ks(D,\tau,\al,b,\Ths,\gamma)>0$ and $k_6=k_6(D,\tau,\al,b,\Ths,\gamma)>1$
  such that whenever (\ref{gaeps0}), (\ref{gaepsc}), (\ref{init}), (\ref{ie0}), (\ref{ie1}), (\ref{iec}) and (\ref{iec2}) hold with
  \be{10.1}
	\io u_{0xx}^2 + \io (u_{0t})_{xx}^2 + \io (u_{0tt})_x^2 < \nu
  \ee
  as well as
  \be{10.2}
	\|\Theta_0\|_{L^\infty(\Om)} < \Ths
	\qquad \mbox{and} \qquad
	\|\Theta_{0x}\|_{L^\infty(\Om)} + \|\Theta_{0xx}\|_{L^\infty(\Om)} < \nu,
  \ee
  one can fix 
  $\eps_0=\eps_0\big(D,\tau,\al,b,(u_{0\eps})_{\eps\in (0,1)},(v_{0\eps})_{\eps\in (0,1)},(w_{0\eps})_{\eps\in (0,1)},
  (\Theta_{0\eps})_{\eps\in (0,1)},(\gaeps)_{\eps\in (0,1)}\big) \in (0,1)$
  in such a way that whenever $\eps\in (0,\eps_0)$, in Lemma \ref{lem_loc} we have $\tme=\infty$ and
  \be{10.3}
	\io \wepsx^2(\cdot,t) + \io \vepsxx^2(\cdot,t) + \io \uepsxx^2(\cdot,t)
	\le C e^{-\ks t}
	\qquad \mbox{for all } t>0
  \ee
  and
  \be{10.33}
	\frac{1}{k_6} \le \gaeps(\Teps) \le k_6
	\qquad \mbox{in } \Om\times (0,\infty)
  \ee
  and
  \be{10.4}
	\|\Teps(\cdot,t)\|_{W^{2,\infty}(\Om)} \le k_6
	\qquad \mbox{for all } t>0
  \ee
  and
  \be{10.44}
	\|\Tepst(\cdot,t)\|_{L^\infty(\Om)} \le k_6
	\qquad \mbox{for all } t>0
  \ee
  as well as
  \be{10.5}
	\|\Tepsx(\cdot,t)\|_{L^\infty(\Om)} \le k_6 e^{-\ks t}
	\qquad \mbox{for all } t>0
  \ee
  and
  \be{10.55}
	0 \le \frac{d}{dt} \io \Teps(\cdot,t) \le k_6 e^{-\ks t}
	\qquad \mbox{for all } t>0.
  \ee
\end{lem}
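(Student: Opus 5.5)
The argument is a continuity (bootstrap) argument. Fix $B$ with $\frac1b<B<\frac\al\tau$, which is possible by (\ref{12.01}). Then Lemma \ref{lem6} supplies $\ugs,\ogs,\Gas,\del,\eta,\kappa$ and $\epss$. Corollary \ref{cor7}, Lemma \ref{lem8} and Lemma \ref{lem9} supply $k_3,k_4,k_5$ and $\kappa_0\le\kappa$, all determined by $(D,\tau,\al,b,\Ths,\gamma)$. The data enter only through $\Aeps$, $\|\Theta_{0\eps}\|_{L^\infty(\Om)}$, $\|\Theta_{0\eps x}\|_{L^\infty(\Om)}$ and $\|\Theta_{0\eps t}\|_{L^\infty(\Om)}$ from (\ref{9.4}).

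By (\ref{iec}) and (\ref{iec2}), $\Aeps\to \io (u_{0tt})_x^2+\io (u_{0t})_{xx}^2+\io u_{0xx}^2<\nu$, and $\|\Theta_{0\eps}\|_{L^\infty(\Om)}\to\|\Theta_0\|_{L^\infty(\Om)}<\Ths$. By (\ref{ie1}), $\|\Theta_{0\eps x}\|_{L^\infty(\Om)}+\|\Theta_{0\eps xx}\|_{L^\infty(\Om)}<\nu+\eps$. For the remaining quantity, $\|\Theta_{0\eps t}\|_{L^\infty(\Om)}\le D(\nu+\eps)+b\ogs c\,\|v_{0\eps xx}\|_{L^2(\Om)}^2$, using (\ref{6.44}) and the embedding (\ref{8.5}), which needs $\Theta_{0\eps}\le 2\Ths$. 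Hence for small $\eps$ all of these are bounded by a multiple $c(D,\dots)(\nu+\eps)$. I choose $\nu$ so small that
\[
k_5\cdot 2\nu<\Ths, \qquad (1+k_5)\big(c\cdot2\nu+k_5\cdot 2\nu\big)<\eta/2 .
\]
This also covers the $\Tepsxx$ and $\Tepst$ bounds from (\ref{9.2}) and (\ref{9.22}), and the $\Tepsx$ bound from (\ref{9.3}). I then take $\eps_0\le\epss$ so small that all the limit statements above hold with $2\nu$ in place of $\nu$ and with $\|\Theta_{0\eps}\|_{L^\infty(\Om)}<\Ths$.

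Next I set $T_\eps:=\sup\{T\in(0,\tme):\ (\ref{te}) \text{ holds in } \Om\times(0,T)\}$. It is positive: at $t=0$ we have $\Theta_{0\eps}<\Ths$, and the derivative bounds are at most $\eta/2$, so (\ref{te}) holds strictly there. Continuity then gives $T_\eps>0$, using that $\Tepst$, $\Tepsx$ and $\Tepsxx$ are continuous up to $t=0$ by (\ref{reg}). On $(0,T_\eps)$, Lemma \ref{lem9} gives $\Teps\le\Ths+k_5\cdot 2\nu<\tfrac32\Ths$ and $|\Tepsx|+|\Tepsxx|+|\Tepst|\le\eta/2$. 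So the strict versions of (\ref{te}) hold up to $T_\eps$, and if $T_\eps<\tme$ continuity contradicts maximality. Thus $T_\eps=\tme$.

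If $\tme<\infty$, the bounds available on $(0,\tme)$ contradict (\ref{ext_eps}). These are (\ref{7.1})--(\ref{7.3}) together with Poincar\'e's inequality and the zero means (\ref{mass}), which bound $\|\weps\|_{L^\infty(\Om)}$ and the $W^{1,2}$ norms of $\veps,\ueps$, plus the bound on $\Teps$ from (\ref{9.1}). Hence $\tme=\infty$. Now (\ref{10.3}) follows from Corollary \ref{cor7} with $\ks:=\kappa_0$. Estimate (\ref{10.33}) is (\ref{5.2}) with $k_6\ge\max\{\ogs,1/\ugs\}$. Estimates (\ref{10.4}) and (\ref{10.44}) follow from $0\le\Teps\le2\Ths$ and $|\Tepsx|,|\Tepsxx|,|\Tepst|\le\eta$. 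Estimate (\ref{10.5}) is (\ref{9.3}). For (\ref{10.55}), integrating the $\Teps$-equation and using the Neumann condition gives $\frac{d}{dt}\io\Teps=\io\heps\ge0$, and (\ref{8.2}) bounds this by $|\Om|k_4\Aeps e^{-\kappa t}$.

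The main obstacle is the closing step: making $\nu$ independent of $\eps$ while the Lemma \ref{lem9} bounds depend on $\Theta_{0\eps t}$, which involves $\Theta_{0\eps xx}$ and $v_{0\eps x}^2$. I would handle this as above, via (\ref{ie1}) and smallness of $\|v_{0\eps xx}\|_{L^2(\Om)}$. A further point to check is that the first bound in (\ref{9.8}) really involves $\|\Theta_{0\eps t}\|_{L^\infty(\Om)}$ and not $\|\Theta_{0\eps}\|_{L^\infty(\Om)}$, as stated in (\ref{9.2}); otherwise the smallness of $\Tepst$ would fail.
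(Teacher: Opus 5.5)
Your proposal is correct and follows essentially the same route as the paper. You fix $\nu$ in terms of $k_5$ and $\eta$, pick $\eps_0$ from the convergence of the data (controlling $\|\Theta_{0\eps t}\|_{L^\infty(\Om)}$ through (\ref{ie1}) and $\io v_{0\eps xx}^2\le\Aeps$), and run the continuity argument on the maximal time up to which (\ref{te}) holds, strictly improving it via Lemma \ref{lem9}; you then get $\tme=\infty$ from (\ref{ext_eps}) and the decay from Corollary \ref{cor7}.

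The remaining slips are cosmetic. Your condition $2k_5\nu<\Ths$ gives only $\Teps<2\Ths$ rather than $\Teps<\frac{3}{2}\Ths$, which still suffices. Your smallness condition on $\nu$ dominates the summed bounds from (\ref{9.2}), (\ref{9.22}) and (\ref{9.3}) only once $k_5\ge 3$, which you may assume by enlarging $k_5$. Your suspicion about (\ref{9.8}) is justified: the preceding computation yields $\|\Theta_{0\eps t}\|_{L^\infty(\Om)}$ there, exactly as stated in (\ref{9.2}).
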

\proof
  We choose a constant $c_1>0$ such that
  \be{10.6}
	\|\vp\|_{L^\infty(\Om)} \le c_1 \|\vp_x\|_{L^2(\Om)}
	\qquad \mbox{for all $\vp\in W^{1,2}(\Om)$ satisfying } \inf_\Om \vp=0,
  \ee
  and taking any $B=B(\tau,\al,b)>0$ fulfilling (\ref{5.1}), we let 
  $\del=\del(\tau,\al,b,B,\Ths,\gamma)$,
  $\eta=\eta(\tau,\al,b,B,\Ths,\gamma)$,
  $\kappa=\kappa(\tau,\al,b,B,\Ths,\gamma)$,
  $\kappa_0=\kappa_0(\tau,\al,b,B,\Ths,\gamma)$,
  $k_3=k_3(\tau,\al,b,B,\Ths,\gamma)$,
  $k_4=k_4(\tau,\al,b,B,\Ths,\gamma)$ and
  $k_5=k_5(D,\tau,\al,b,B,\Ths,\gamma)$
  be as provided by Lemma \ref{lem6}, Corollary \ref{cor7}, Lemma \ref{lem8} and Lemma \ref{lem9}, respectively.
  We then fix $\nu=\nu(D,\tau,\al,b,\Ths,\gamma) \in(0,1]$ suitably small such that
  \be{10.7}
	k_5 \nu \le \frac{1}{2} \Ths,
  \ee
  and that writing
  \be{10.77}
	c_2\equiv c_2(\Ths,\gamma) := \max \bigg\{ \frac{2}{\inf_{\xi\in [0,\Ths]} \gamma(\xi)} \, , \, 
		1 + \sup_{\xi\in [0,\Ths]} \gamma(\xi) \bigg\}
  \ee
  as well as
  \be{10.78}
	c_3\equiv c_3(D,\Ths,\gamma):=D+bc_1^2 c_2
  \ee
  we have
  \be{10.79}
	(1+c_3)\nu \le \frac{\eta}{2}
  \ee
  and
  \be{10.8}
	\big\{ 4k_5 + (k_5+1) c_3 \big\} \cdot \nu
	\le \frac{\eta}{2}.
  \ee
  Henceforth assuming (\ref{10.1}), (\ref{10.2}), (\ref{init}), (\ref{ie0}), (\ref{ie1}), (\ref{iec}), (\ref{iec2}), (\ref{gaeps0})  
  and (\ref{gaepsc}),
  we then infer that letting $\epss=\epss\big(\Ths,(\gaeps)_{\eps\in (0,1)}\big)$ be as obtained in Lemma \ref{lem6}, 
  we can find 		
  $\eps_0=\eps_0\big(D,\tau,\al,b,(u_{0\eps})_{\eps\in (0,1)},$ $(v_{0\eps})_{\eps\in (0,1)},(w_{0\eps})_{\eps\in (0,1)},
  (\Theta_{0\eps})_{\eps\in (0,1)},(\gaeps)_{\eps\in (0,1)}\big) \in (0,\epss]$ such that whenever $\eps\in (0,\eps_0)$,
  \be{10.9}
	\io w_{0\eps x}^2 + \io v_{0\eps xx}^2 + \io u_{0\eps xx}^2 + \eps \io u_{0\eps xxx}^2 \le \nu
  \ee
  and
  \be{10.10}
	\|\Theta_{0\eps}\|_{L^\infty(\Om)} \le \Ths
	\qquad \mbox{and} \qquad
	\|\Theta_{0\eps x}\|_{L^\infty(\Om)} + \|\Theta_{0\eps xx}\|_{L^\infty(\Om)} \le \nu
  \ee
  as well as
  \be{10.100}
	\frac{1}{c_2} \le \gaeps(\xi) \le c_2
	\qquad \mbox{for all } \xi\in [0,\Ths].
  \ee
  Due to (\ref{10.6}), (\ref{10.77}) and (\ref{10.78}), this especially ensures that the functions in (\ref{9.4}) satisfy
  \bea{10.11}
	\|\Theta_{0\eps t}\|_{L^\infty(\Om)}
	&=& \big\| D\Theta_{0\eps xx} + b\gaeps(\Theta_{0\eps}) v_{0\eps x}^2 \big\|_{L^\infty(\Om)} \nn\\
	&\le& D\|\Theta_{0\eps xx}\|_{L^\infty(\Om)}^2 + bc_2 c_1^2 \|v_{0\eps xx}\|_{L^2(\Om)}^2 \nn\\
	&\le& D\nu^2 + bc_2 c_1^2 \nu \nn\\
	&\le& c_3 \nu
	\qquad \mbox{for all } \eps\in (0,\eps_0),
  \eea
  because $v_{0\eps x}$ hs compact support in $\Om$ and $\nu\le 1$.\abs
  For $\eps\in (0,\eps_0)$, we now let
  \bea{10.12}
	T_\eps
	&:=& \sup \Big\{ T'\in (0,\tme) \ \Big| \ 
	\|\Teps(\cdot,t)\|_{L^\infty(\Om)} < 2\Ths \mbox{ and } \nn\\
	& & \hs{0mm}
	\|\Tepsx(\cdot,t)\|_{L^\infty(\Om)} + \|\Tepsxx(\cdot,t)\|_{L^\infty(\Om)} + \|\Tepst(\cdot,t)\|_{L^\infty(\Om)} < \eta 
	\mbox{ for all } t\in (0,T') \Big\},
  \eea
  and note that thanks to the continuity of $\Teps,\Tepsx,\Tepsxx$ and $\Tepst$ in $\bom\times [0,\tme)$,
  $T_\eps$ is well-defined with $T_\eps\in (0,\infty]$ due to the first inequality in (\ref{10.10}) and the fact that
  \be{10.13}
	\|\Theta_{0\eps x}\|_{L^\infty(\Om)} 
	+ \|\Theta_{0\eps xx}\|_{L^\infty(\Om)} 
	+ \|\Theta_{0\eps t}\|_{L^\infty(\Om)} 
	\le \nu + c_3 \nu \le \frac{\eta}{2}
  \ee
  by the second restriction in (\ref{10.10}), by (\ref{10.11}), and by (\ref{10.79}).\abs
  Now according to (\ref{10.12}) and our selection of $\eta$, Lemma \ref{lem9} applies so as to assert that since the number 
  $\Aeps$ in (\ref{7.5}) satisfies $\Aeps\le\nu$ by (\ref{10.9}), due to (\ref{10.10}) and (\ref{10.7}) for $\eps\in(0,\eps_0)$ 
  we have
  \be{10.14}
	\|\Teps(\cdot,t)\|_{L^\infty(\Om)}
	\le \|\Theta_{0\eps}\|_{L^\infty(\Om)} + k_5 \nu
	\le \Ths + k_5 \nu
	\le \frac{3\Ths}{2}
	\qquad \mbox{for all } t\in (0,T_\eps),
  \ee
  while by (\ref{10.10}) and the choice of $\kappa_0$,
  \bea{10.15}
	\|\Tepsx(\cdot,t)\|_{L^\infty(\Om)}
	&\le& k_5 \cdot \big\{ \|\Theta_{0\eps x} \|_{L^\infty(\Om)} + \nu \big\} \cdot e^{-\kappa_0 t} \nn\\
	&\le& 2k_5 \nu e^{-\kappa_0 t}
	\qquad \mbox{for all $t\in (0,T_\eps)$ and } \eps\in(0,\eps_0)
  \eea
  and by (\ref{10.11}) and (\ref{10.10}),
  \bea{10.16}
	\|\Tepsxx(\cdot,t)\|_{L^\infty(\Om)}
	+ \|\Tepst(\cdot,t)\|_{L^\infty(\Om)}
	&\le& \big\{ k_5 \|\Theta_{0\eps t}\|_{L^\infty(\Om)} + k_5 \nu \big\} 
	+ \big\{ \|\Theta_{0\eps t}\|_{L^\infty(\Om)} + k_5 \nu \big\} \nn\\
	&=& (k_5+1) \|\Theta_{0\eps t}\|_{L^\infty(\Om)} + 2 k_5 \nu \nn\\
	&\le& \big\{ (k_5+1) c_3 + 2 k_5 \big\}\cdot  \nu
  \eea
  for all $t\in (0,T_\eps)$ and $\eps\in(0,\eps_0)$.
  In particular, a combination of (\ref{10.15}) with (\ref{10.13}) shows that in line with (\ref{10.8}),
  \bas
	\|\Tepsx(\cdot,t)\|_{L^\infty(\Om)}
	+ \|\Tepsxx(\cdot,t)\|_{L^\infty(\Om)}
	+ \|\Tepst(\cdot,t)\|_{L^\infty(\Om)}
	&\le& \big\{ 4k_5 + (k_5+1) c_3 \big\} \cdot \nu \nn\\
	&\le& \frac{\eta}{2}
	\qquad \mbox{for all $t\in (0,T_\eps)$ and }\eps\in(0,\eps_0),
  \eas
  which in conjunction with (\ref{10.14}) implies that, again by continuity of $\Teps,\Tepsx,\Tepsxx$ and $\Tepst$, we actually must 
  have
  \be{10.17}
	T_\eps=\tme.
  \ee
  To conclude from this as intended, we finally invoke Corollary \ref{cor7} and thereby see that since
  $\kappa_0\le\kappa$, and again since
  $\Aeps \le\nu$, from (\ref{10.12}) and (\ref{10.17}) it follows that
  \be{10.18}
	\io \wepsx^2 + \io \vepsxx^2 + \io \uepsxx^2 \le 3k_3 \nu e^{-\kappa_0 t}
	\qquad \mbox{for all } t\in (0,\tme),
  \ee
  which together with the bound for $\Teps$ entailed by (\ref{10.12}) ensures that, in fact, in (\ref{ext_eps}) the second
  alternative cannot occur. Therefore, indeed, $\tme=\infty$, so that since Lemma \ref{lem8} accordingly says that
  with $\heps$ as in (\ref{h}) we have
  \be{10.19}
	\frac{d}{dt} \io \Teps
	= \io \heps
	\le k_4 \Aeps |\Om| e^{-\kappa_0 t}
	\le k_4 \nu |\Om| e^{-\kappa_0 t}
	\qquad \mbox{for all } t>0,
  \ee
  we obtain (\ref{10.3}), (\ref{10.33}), (\ref{10.4}), (\ref{10.44}), (\ref{10.5}) and (\ref{10.55}) as consequences of (\ref{10.18}),
  the definition of $T_\eps$, (\ref{10.100}), (\ref{10.15}) and (\ref{10.19}) if we let 
  $\ks\equiv \ks(D,\tau,\al,b,\Ths,\gamma):=\kappa_0$,
  and choose $k_6=k_6(D,\tau,\al,b,\Ths,\gamma)>0$ appropriately large.
\qed
\mysection{Passing to the limit $\eps\searrow 0$. Conclusion}\label{sec7}
It remains to appropriately pass to the limit $\eps\searrow 0$ in order to achieve our main result on global existence
of strong solutions decaying exponentially in the large time limit.\abs
\proofc of Theorem \ref{theo12}. \quad
  The statement on uniqueness has been covered by \cite{claes_win2}, so that it remains to construct a strong solution
  with the claimed properties.
  To this end, relying on (\ref{12.01}) we first employ Lemma \ref{lem10} to see that if we let 
  $\nu=\nu(D,\tau,\al,b,\Ths,\gamma)$,
  $\ks=\ks(D,\tau,\al,b,\Ths,\gamma)$ and
  $k_6=k_6(D,\tau,\al,b,\Ths,\gamma)$ be as provided there, and if we let 
  $u_0, u_{0t}, u_{0tt}$ and $\Theta_0$ satisfy (\ref{init}), (\ref{12.1}) and (\ref{12.2}), then whenever
  (\ref{gaeps0}), (\ref{gaepsc}), (\ref{ie0}), (\ref{ie1}), (\ref{iec}) and (\ref{iec2}) hold, we can find a constant
  $\eps_0=\eps_0\big(D,\tau,\al,b,(u_{0\eps})_{\eps\in (0,1)},(v_{0\eps})_{\eps\in (0,1)},(w_{0\eps})_{\eps\in (0,1)},
  (\Theta_{0\eps})_{\eps\in (0,1)},(\gaeps)_{\eps\in (0,1)}\big) \in (0,1)$
  such that for each $\eps\in (0,\eps_0)$,
  the solution of (\ref{0eps}) is global in time and satisfies (\ref{10.3})-(\ref{10.55}).\abs
  Since for any $\psi\in C^1(\bom)$ we can use the first equation in (\ref{0eps}) and the Cauchy-Schwarz inequality to estimate
  \bas
	\bigg| \tau \io \wepst \psi \bigg|
	&=& \bigg| - \eps \io \wepsx \psi_x 
	- b \io \gaeps(\Teps)\vepsx\psi_x
	- \io \gaeps(\Teps)\uepsx \psi_x -\alpha\io \weps\psi\bigg| \nn\\
	&\le& \eps \|\wepsx\|_{L^2(\Om)} \|\psi_x\|_{L^2(\Om)}
	+ b \|\gaeps(\Teps)\|_{L^\infty(\Om)} \|\vepsx\|_{L^2(\Om)} \|\psi_x\|_{L^2(\Om)} +\alpha\|\weps\|_{L^2(\Om)}\|\psi\|_{L^2(\Om)}\\
	& & + \|\gaeps(\Teps)\|_{L^\infty(\Om)} \|\uepsx\|_{L^2(\Om)} \|\psi_x\|_{L^2(\Om)}	
	\qquad \mbox{for all $t>0$ and } \eps\in (0,1),
  \eas
  from (\ref{10.3}), (\ref{10.33}), 
  (\ref{mass})
  and a Poincar\'e type inequality we particularly obtain that
  \bas
	(\weps)_{\eps\in (0,\eps_0)} 
	\mbox{ is bounded in } L^\infty((0,\infty);W^{1,2}(\Om)),
  \eas
  and that
  \bas
	(\wepst)_{\eps\in (0,\eps_0)} 
	\mbox{ is bounded in } L^\infty\big((0,\infty);(W^{1,2}(\Om))^\star\big),
  \eas
  while from (\ref{10.3}) as well as the second and third equations in (\ref{0eps}) it directly follows that
  \bas
	(\veps)_{\eps\in (0,\eps_0)} \ \mbox{and} \ (\ueps)_{\eps\in (0,\eps_0)}
	\mbox{ are bounded in } L^\infty((0,\infty);W^{2,2}(\Om)),
  \eas
  and that
  \bas
	(\vepst)_{\eps\in (0,\eps_0)} \ \mbox{and} \ (\uepst)_{\eps\in (0,\eps_0)}
	\mbox{ are bounded in } L^\infty((0,\infty);L^2(\Om)).
  \eas
  As from (\ref{10.4}) and (\ref{10.44}) we know that furthermore
  \bas
	(\Teps)_{\eps\in (0,\eps_0)} 
	\mbox{ is bounded in } L^\infty((0,\infty);W^{2,\infty}(\Om)),
  \eas
  and that
  \bas
	(\Tepst)_{\eps\in (0,\eps_0)} 
	\mbox{ is bounded in } L^\infty(\Om\times (0,\infty)),
  \eas
  relying on the compactness of the embeddings $W^{1,2}(\Om) \hra C^0(\bom)$ and $W^{2,2}(\Om)\hra C^1(\bom)$ we may therefore
  apply an Aubin-Lions lemma (\cite{simon}) to obtain $(\eps_j)_{j\in\N}\subset (0,\eps_0)$ as well as functions
  \be{12.5}
	\lbal
	w \in C^0(\bom\times [0,\infty)) \cap L^\infty((0,\infty);W^{1,2}(\Om)), \\[1mm]
	v \in C^0([0,\infty);C^1(\bom)) \cap L^\infty((0,\infty);W^{2,2}_N(\Om)), \\[1mm]
	u \in C^0([0,\infty);C^1(\bom)) \cap L^\infty((0,\infty);W^{2,2}_N(\Om))
	\qquad \mbox{and} \\[1mm]
	\Theta \in C^0([0,\infty);C^1(\bom)) \cap L^\infty((0,\infty);W^{2,\infty}_N(\Om))
	\quad \mbox{with} \quad
	\Theta_t \in L^\infty(\Om\times (0,\infty))
	\ear
  \ee
  such that $\eps_j\searrow 0$ as $j\to\infty$, that $\Theta\ge 0$ in $\Om\times (0,\infty)$, and that
  \begin{eqnarray}
	& & \weps \to w
	\qquad \mbox{in } C^0_{loc}(\bom\times [0,\infty)),
	\label{12.6} \\
	& & \wepsx \wto w_x
	\qquad \mbox{in } L^2_{loc}(\bom\times [0,\infty)),
	\label{12.7} \\
	& & \veps \to v
	\qquad \mbox{in } C^0_{loc}([0,\infty);C^1(\bom)),
	\label{12.8} \\
	& & \ueps \to u
	\qquad \mbox{in } C^0_{loc}([0,\infty);C^1(\bom))
	\qquad \qquad \mbox{and}
	\label{12.9} \\
	& & \Teps \to \Theta
	\qquad \mbox{in } C^0_{loc}([0,\infty);C^1(\bom))
	\label{12.10}
  \end{eqnarray}
  as $\eps=\eps_j\searrow 0$.
  Since from the third equation in (\ref{0eps}) it follows that for each $\vp\in C_0^\infty(\Om\times [0,\infty))$ 
  and any $\eps\in (0,1)$ we have
  \bas
	- \int_0^\infty \io \ueps \vp_t 
	- \io u_{0\eps} \vp(\cdot,0)
	= - \eps \int_0^\infty \io \uepsx \vp_x
	+ \int_0^\infty \io  \veps \vp,
  \eas
  using (\ref{12.9}), (\ref{12.8}) and (\ref{iec}) we infer on letting $\eps=\eps_j\searrow 0$ here that
  \bas
	- \int_0^\infty \io u\vp_t - \io u_0 \vp(\cdot,0) = \int_0^\infty \io v \vp
  \eas
  for any such $\vp$, meaning that $u_t=v$ in $\Om\times (0,\infty)$ and $u(\cdot,0)=u_0$ in $\Om$.
  Similarly combining the second equation in (\ref{0eps}) with (\ref{12.8}) and (\ref{12.6}), we obtain that $v_t=w$ in 
  $\Om\times (0,\infty)$ and $v(\cdot,0)=u_{0t}$ in $\Om$, so that (\ref{12.5}) implies that
  \be{12.11}
	\lbal
	u_t \in C^0([0,\infty);C^1(\bom)) \cap L^\infty((0,\infty);W^{2,2}_N(\Om))	
	\qquad \mbox{and} \\[1mm]
	u_{tt} \in C^0(\bom\times [0,\infty)) \cap L^\infty((0,\infty);W^{1,2}(\Om))	
	\ear
  \ee
  with $(u,u_t)(\cdot,0)=(u_0,u_{0t})$ in $\Om$.\abs
  Apart from that, given any $\vp\in C_0^\infty(\bom\times [0,\infty))$ we see using the first equation in (\ref{0eps}) that
  \bas
	- \tau \int_0^\infty \io \weps \vp_t - \tau \io w_{0\eps} \vp(\cdot,0)
	= - \eps \int_0^\infty \io \wepsx \vp_x
	&-& b \int_0^\infty \io \gaeps(\Teps) \vepsx \vp_x \\
	&-& \int_0^\infty \io \gaeps(\Teps) \uepsx \vp_x - \int_0^\infty \io \weps \vp
  \eas
  for all $\eps\in (0,1)$, and may utilize (\ref{12.6}), (\ref{iec}), (\ref{12.7}), (\ref{12.10}), (\ref{gaepsc}), (\ref{12.8})
  and (\ref{12.9}) to find that taking $\eps=\eps_j\searrow 0$ here leads to the identity
  \bas
	- \tau \int_0^\infty \io w \vp_t - \tau \io u_{0tt} \vp(\cdot,0)
	= - b \int_0^\infty \io \gamma(\Theta) v_x \vp_x
	- \int_0^\infty \io \gamma(\Theta) u_x \vp_x - \int_0^\infty \io w \vp,
  \eas
  which in light of the fact that $w=u_{tt}$ and $v_x=u_{xt}$ is equivalent to (\ref{wu}).\abs
  Likewise, from (\ref{12.10}), (\ref{iec}), (\ref{gaepsc}) and (\ref{12.8}) we obtain that if 
  $\vp\in C_0^\infty(\bom\times [0,\infty))$, then in the equation
  \bas
	- \int_0^\infty \io \Teps \vp_t - \io \Theta_{0\eps} \vp(\cdot,0)
	= - D \int_0^\infty \io \Tepsx \vp_x
	+ b \int_0^\infty \io \gaeps(\Teps) \vepsx^2 \vp,
  \eas
  valid for all $\eps\in (0,1)$ due to (\ref{0eps}), we may let $\eps=\eps_j\searrow 0$ to see that
  \bas
	- \int_0^\infty \io \Theta \vp_t - \io \Theta_0 \vp(\cdot,0)
	= - D \int_0^\infty \io \Theta_x \vp_x
	+ b \int_0^\infty \io \gamma(\Theta) u_{xt}^2 \vp.
  \eas
  Therefore, $\Theta$ forms a weak solution, in the sense specified in (\cite{LSU}), of
  \be{12.12}
	\lball
	\Theta_t = D\Theta_{xx} + h(x,t),
	\qquad & x\in\Om, \ t>0, \\[1mm]
	\frac{\pa\Theta}{\pa\nu}=0,
	\qquad & x\in\pO, \ t>0, \\[1mm]
	\Theta(x,0)=\Theta_0(x),
	\qquad & x\in\Om,
	\ear
  \ee
  with $h(x,t):=b\gamma(\Theta) u_{xt}^2$.
  Now 	
  from (\ref{12.11}) we particularly obtain that 
  $u_{xtt} \in L^\infty((0,\infty);L^2(\Om))$, whence by means of a Gagliardo-Nirenberg interpolation and the mean value theorem
  we infer that with some $c_1>0$, $c_2>0$ and $c_3>0$,
  \bas
	\|u_{xt}(\cdot,t)-u_{xt}(\cdot,s)\|_{L^\infty(\Om)}
	&\le& c_1 
	\|u_{xt}(\cdot,t)-u_{xt}(\cdot,s)\|_{W^{1,2}(\Om)}^\frac{1}{2}
	\|u_{xt}(\cdot,t)-u_{xt}(\cdot,s)\|_{L^2(\Om)}^\frac{1}{2} \\
	&\le& c_2
	\bigg\| \int_s^t u_{xtt}(\cdot,\sig) d\sig \bigg\|_{L^2(\Om)}^\frac{1}{2} \\
	&\le& c_3 |t-s|^\frac{1}{2}
	\qquad \mbox{for all $t>0$ and $s>0$.}
  \eas
  As moreover $u_{xt} \in C^0(\bom\times [0,\infty)) \cap L^\infty((0,\infty);C^\frac{1}{2}(\bom))$ according to (\ref{12.11})
  and the continuity of the embedding $W^{2,2}(\Om) \hra C^\frac{3}{2}(\bom)$, if thus follows that
  $u_{xt} \in C^{\vt_1,\frac{\vt_1}{2}}(\bom\times [0,T])$ for all $T>0$ and some $\vt_1\in (0,1)$,
  so that since (\ref{12.5}) implies that $\Theta\in C^{\vt_2,\frac{\vt_2}{2}}(\bom\times [0,T])$ for all $T>0$ and some
  $\vt_2\in (0,1)$, drawing on the inclusion $\gamma\in C^1([0,\infty))$ we obtain $\vt_3\in (0,1)$ such that
  $h\in C^{\vt_3,\frac{\vt_3}{2}}(\bom\times [0,T])$ for all $T>0$.
  We may hence apply standard parabolic Schauder theory (\cite{LSU}) to (\ref{12.12}) to see that for each $t_0>0$ and any $T>t_0$
  there exists $\vt_4=\vt_4(t_0,T)\in (0,1)$ such that $\Theta\in C^{2+\vt_4,1+\frac{\vt_4}{2}}(\bom\times [t_0,T])$.
  In particular, this entails that (\ref{12.12}) is actually satisfied in the classical sense, so that $(u,\Theta)$ indeed forms
  a strong solution of (\ref{0}) according to Definition \ref{dw}. \abs
  The estimate in (\ref{12.3}) now results from (\ref{10.3}) in a straightforward manner:
  Writing $\rho(t):=e^{\frac{\ks}{2} t}$ for $t\ge 0$, using (\ref{10.3}) we see that for $\zeps(x,t):=\rho(t) \weps(x,t)$,
  $(x,t)\in\bom\times [0,\infty)$, $\eps\in (0,\eps_0)$, we have
  \bas
	\io \zepsx^2(\cdot,t) \le k_6
	\qquad \mbox{for all $t>0$ and } \eps\in (0,\eps_0),
  \eas
  which due to (\ref{mass}) and a Poincar\'e inequality ensures the boundedness of the family $(\zeps)_{\eps\in (0,1)}$
  in $L^\infty((0,\infty);W^{1,2}(\Om))$.
  We can therefore extract a subsequence $(\eps_{j_i})_{i\in\N}$ of $(\eps_j)_{j\in\N}$ such that with some
  $z\in L^\infty((0,\infty);W^{1,2}(\Om))$ fulfilling
  \be{12.14}
	\io z_x^2(\cdot,t) \le k_6
	\qquad \mbox{for a.e.~} t>0
  \ee
  we have $\zeps \wsto z$ in $L^\infty((0,\infty);W^{1,2}(\Om))$ as $\eps=\eps_{j_i} \searrow 0$.
  Since, on the other hand, (\ref{12.6}) ensures that $\zeps\to \rho w=\rho u_{tt}$ in $C^0_{loc}(\bom\times [0,\infty))$
  as $\eps=\eps_j\searrow 0$, we can identify this limit according to $z(\cdot,t)=\rho(t)u_{tt}(\cdot,t)$ in $\Om$
  for a.e.~$t>0$, so that (\ref{12.14}) implies that
  \bas
	\io u_{xtt}^2 \le k_6 e^{-\ks t}
	\qquad \mbox{for a.e.~} t>0.
  \eas
  Along with two analogous arguments addressing $u_{xx}$ and $u_{xxt}$, this confirms (\ref{12.3}) whenever $C\ge 3k_6$.\abs
  In order to finally verify (\ref{12.4}) with some $\Theta_\infty>0$, we take $c_4>0$ such that
  \be{12.15}
	\bigg\| \psi - \frac{1}{|\Om|} \io \psi\bigg\|_{L^\infty(\Om)}
	\le c_4 \|\psi_x\|_{L^\infty(\Om)}
	\qquad \mbox{for all } \psi\in W^{1,\infty}(\Om),
  \ee
  and note that 
  \bas
	\io \Teps(\cdot,t) - \io \Teps(\cdot,t_0)
	= \int_{t_0}^t \frac{d}{ds} \bigg\{ \io \Teps(\cdot,s) \bigg\} ds
	\qquad \mbox{for all $t_0\ge 0$, $t>t_0$ and } \eps\in (0,1),
  \eas
  and that thus, thanks to (\ref{10.55}),
  \bas
	0 \le \io \Teps(\cdot,t) - \io \Teps(\cdot,t_0)
	\le k_6 \int_{t_0}^t e^{-\ks s} ds
	= \frac{k_6}{\ks} \cdot (e^{-\ks t_0} - e^{-\ks t})
  \eas
  for all $t_0\ge 0$, $t>t_0$ and $\eps\in (0,\eps_0)$. In line with (\ref{12.10}), this entails that
  \bas
	0 \le \io \Theta(\cdot,t) - \io \Theta(\cdot,t_0)
	\le \frac{k_6}{\ks} \cdot (e^{-\ks t_0} - e^{-\ks t})
	\qquad \mbox{for all $t_0\ge 0$ and } t>t_0,
  \eas
  hence implying that with some $\Theta_\infty\in (0,\infty)$ we have
  $\frac{1}{|\Om|} \io \Theta(\cdot,t) \nearrow \Theta_\infty$ as $t\to\infty$ and, in fact,
  \be{12.16}
	\Theta_\infty - \frac{1}{|\Om|} \io \Theta(\cdot,t_0)
	\le \frac{k_6}{\ks |\Om|} e^{-\ks t_0}
	\qquad \mbox{for all } t_0>0.
  \ee
  Since, again by (\ref{12.10}), we moreover know from (\ref{10.5}) that
  \bas
	\|\Theta_x(\cdot,t)\|_{L^\infty(\Om)}
	\le k_6 e^{-\ks t}
	\qquad \mbox{for all } t>0,
  \eas
  by combining (\ref{12.15}) with (\ref{12.16}) we infer that
  \bas
	\|\Theta(\cdot,t)-\Theta_\infty\|_{L^\infty(\Om)}
	&\le& \bigg\| \Theta(\cdot,t)-\frac{1}{|\Om|} \io \Theta(\cdot,t)\bigg\|_{L^\infty(\Om)}
	+ \bigg\| \frac{1}{|\Om|} \io \Theta(\cdot,t) - \Theta_\infty \bigg\|_{L^\infty(\Om)} \\
	&\le& c_4 \|\Theta_x\|_{L^\infty(\Om)}
	+ \bigg\| \frac{1}{|\Om|} \io \Theta(\cdot,t) - \Theta_\infty \bigg\|_{L^\infty(\Om)} \\
	&\le& c_4 k_6 e^{-\ks t}
	+ \frac{k_6}{\ks |\Om|} e^{-\ks t}
	\qquad \mbox{for all } t>0,
  \eas
  so that if we require $C$ to satisfy $C\ge c_4 k_6 + \frac{k_6}{\ks |\Om|}$, then indeed (\ref{12.4}) follows as well.
\qed

\bigskip

{\bf Acknowledgment.} \quad
The authors acknowledge support of the Deutsche Forschungsgemeinschaft (Project No. 444955436).
They moreover declare that they have no conflict of interest.\abs
{\bf Data availability statement.} \quad
Data sharing is not applicable to this article as no datasets were
generated or analyzed during the current study.

\small{

}

\end{document}